\documentclass[11pt,reqno,oneside]{article}
\usepackage{amsmath, amsthm, mathtools, amssymb, enumerate,esint}
\usepackage{graphicx}
\usepackage{xcolor}
\usepackage{caption}
\usepackage{datetime}
\usepackage{fancyhdr}
\usepackage{marginnote}

\usepackage[unicode,breaklinks=true,colorlinks=true,linkcolor=blue,urlcolor=blue,citecolor=blue]{hyperref}
\usepackage[margin=1in,headheight=14pt]{geometry}
\usepackage{mdframed}
\usepackage{cancel}
\numberwithin{equation}{section}

  \chardef\forshowkeys=1
  \chardef\showllabel=0
  \chardef\refcheck=0
  \chardef\sketches=0
  \chardef\showcolors=1

\ifnum\showllabel=1
 \def\llabel#1{\marginnote{\color{lightgray}\rm\small(#1)}[-0.0cm]\notag}
\else
 \def\llabel#1{\notag}
\fi

\begin{document}
\def\tdot{{\gdot}}
\def\intk{[k T_0, (k+1)T_0]}
\def\Dg{{D'g}}
\def\ua{u^{\alpha}}
\def\ques{{\colr \underline{??????}\colb}}
\def\nto#1{{\colC \footnote{\em \colC #1}}}
\def\fractext#1#2{{#1}/{#2}}
\def\fracsm#1#2{{\textstyle{\frac{#1}{#2}}}}   
\def\baru{U}
\def\nnonumber{}
\def\palpha{p_{\alpha}}
\def\valpha{v_{\alpha}}
\def\qalpha{q_{\alpha}}
\def\walpha{w_{\alpha}}
\def\falpha{f_{\alpha}}
\def\dalpha{d_{\alpha}}
\def\galpha{g_{\alpha}}
\def\halpha{h_{\alpha}}
\def\psialpha{\psi_{\alpha}}
\def\psibeta{\psi_{\beta}}
\def\betaalpha{\beta_{\alpha}}
\def\gammaalpha{\gamma_{\alpha}}
\def\TTalpha{T_{\alpha}}
\def\TTalphak{T_{\alpha,k}}
\def\falphak{f^{k}_{\alpha}}
\def\R{\mathbb R}
\newcommand {\Dn}[1]{\frac{\partial #1  }{\partial N}}
\def\andand{\text{\quad and\quad}}
\def\wherewhere{\text{\quad where\quad}}
\def\withwith{\text{\quad with\quad}}
\def\andandone{\text{\, and\quad}}
\def\mm{m}
\def\colr{{}}
\def\colr{\color{red}}
\def\colu{\color{blue}}
\def\bnew{\color{red}}
\def\enew{\color{black}}
\def\bold{\color{blue}}
\def\eold{\color{black}}
\def\colg{\color{green}}
\def\colb{{}}
\def\colb{\color{black}}
\def\cole{\color{blue}}
\def\cole{\color{black}}
\def\colA{{}}
\def\colB{{}}
\def\colC{{}}
\def\colD{{}}
\def\colE{{}}
\def\colF{{}}

\def\rref#1{{\ref{#1}{\rm \tiny \fbox{\tiny #1}}}}
\def\theequation{\fbox{\bf \thesection.\arabic{equation}}}
\def\ccite#1{{\cite{#1}{\rm \tiny ({#1})}}}
\def\startnewsection#1#2{\newpage\colg \section{#1}\colb\label{#2}}
\setcounter{equation}{0}
\pagestyle{fancy}
\cfoot{}
\rfoot{\thepage}
\chead{}
\rhead{\thepage}
\def\nnewpage{\newpage}
\newcounter{startcurrpage}
\newcounter{currpage}
\def\llll#1{{\rm\tiny\fbox{#1}}}
   \def\blackdot{{\color{red}{\hskip-.0truecm\rule[-1mm]{4mm}{4mm}\hskip.2truecm}}\hskip-.3truecm}
   \def\bdot{{\color{blue} {\hskip-.0truecm\rule[-1mm]{4mm}{4mm}\hskip.2truecm}}\hskip-.3truecm}
   \def\purpledot{{\colA{\rule[0mm]{4mm}{4mm}}\colb}}
   \def\pdot{\purpledot}
   \def\gdot{{\color{green}{\rule[0mm]{4mm}{4mm}}\colb}}

\def\nts#1{{\hbox{\bf ~#1~}}} 
\def\igor#1{{\colu\bf{\hbox{\bf IK: ~#1~}}}} 
\def\qi#1{{\hbox{\bf\color{purple} QX: ~#1~}}} 
\def\wojtek#1{{\hbox{\bf WO: ~#1~}}} 
\def\nts#1{{\colr\small\hbox{\bf ~#1~}}} 
\def\ntsf#1{\footnote{\colb\hbox{\rm ~#1~}}} 
\def\bigline#1{~\\\hskip2truecm~~~~{#1}{#1}{#1}{#1}{#1}{#1}{#1}{#1}{#1}{#1}{#1}{#1}{#1}{#1}{#1}{#1}{#1}{#1}{#1}{#1}{#1}\\}
\def\biglineb{\bigline{$\downarrow\,$ $\downarrow\,$}}
\def\biglinem{\bigline{---}}
\def\biglinee{\bigline{$\uparrow\,$ $\uparrow\,$}}
\def\inon#1{\hbox{\ \ \ \ \ }\hbox{#1}}
\def\onon#1{\inon{on~$#1$}}
\def\inin#1{\inon{in~$#1$}}
\def\Omf{\Omega_{\text f}}
\def\Ome{\Omega_{\text e}}
\def\Gae{\Gamma_{\text e}}
\def\Gaf{\Gamma_{\text f}}
\def\Gac{\Gamma_{\text c}}
\def\wext{\tilde{w}}
\def\wexts{\widetilde{S w}}
\def\wexta{\overline{w}}
\def\wextb{\overline{\overline{w}}}
\def\mbar{{\overline M}}
\def\tilde{\widetilde}

\newtheorem{Theorem}{Theorem}[section]
\newtheorem{Corollary}[Theorem]{Corollary}
\newtheorem{Proposition}[Theorem]{Proposition}
\newtheorem{Lemma}[Theorem]{Lemma}
\newtheorem{Remark}[Theorem]{Remark}
\newtheorem{definition}{Definition}[section]

\def\theequation{\thesection.\arabic{equation}}
\def\endproof{\hfill$\Box$\\}
\def\square{\hfill$\Box$\\}
\def\comma{ {\rm ,\qquad{}} }            
\def\commaone{ {\rm ,\quad{}} }         
\def\dist{\mathop{\rm dist}\nolimits}    
\def\sgn{\mathop{\rm sgn\,}\nolimits}    
\def\Tr{\mathop{\rm Tr}\nolimits}    
\def\curl{\mathop{\rm curl}\nolimits}    
\def\div{\mathop{\rm div}\nolimits}    
\def\supp{\mathop{\rm supp}\nolimits}    
\def\divtwo{\mathop{{\rm div}_2\,}\nolimits}    
\def\re{\mathop{\rm {\mathbb R}e}\nolimits}    
\def\indeq{\qquad{}\!\!\!\!}                     
\def\period{.}                           
\def\semicolon{\,;}                      
\newcommand{\cD}{\mathcal{D}}
\newcommand{\eqnb}{\begin{equation}}
\newcommand{\eqne}{\end{equation}}
\newcommand{\na}{\nabla }
\newcommand{\bog}{b}
\newcommand{\bb}{\nu }
\newcommand{\ww}{{\overline{w}}}
\newcommand{\la}{\lambda }
\newcommand{\p}{\partial }
\renewcommand{\d}{\mathrm{d} }
\newcommand{\N}{\mathbb{N}}
\newcommand{\on}{\omega^{(n)}}
\newcommand{\onn}{\omega^{(n+1)}}
\newcommand{\onm}{\omega^{(n-1)}}
\newcommand{\nn}{\mathsf{n}}
\newcommand{\ee}{\mathrm{e}}
\newcommand{\T}{\mathbb{T}}
\renewcommand{\R}{\mathbb{R}}
\newcommand{\lec}{\lesssim  }
\newcommand{\gec}{\gtrsim  }
\newcommand{\tom}{\tilde{\omega}}
\newcommand{\tu}{\tilde{u}}
\newcommand{\un}{^{(n)}}
\newcommand{\unp}{^{(n+1)}}
\newcommand{\unm}{^{(n-1)}}
\newcommand{\lo}{L^2(\Omega)}
\newcommand{\lio}{L^\infty(\Omega)}
\newcommand{\ls}{L^2(S)}
\newcommand{\lis}{L^\infty(S)}
\newcommand{\crit}{\mathrm{crit}}
\newcommand{\bdd}{\mathrm{bd}}
\newcommand{\rin}{\mathrm{in}}
\newcommand{\out}{\mathrm{out}}
\def\NL{\text{NL}}

\title{Quantitative Decay for Linear Parabolic Equations}
\author{Igor~Kukavica and Qi Xu}

\date{}
\maketitle
\medskip
\begin{abstract}
We establish sharp temporal lower bounds for the full spatial
$L^2$-norm of energy solutions to $\partial_tu-\Delta u=vu$ on $\R^n$,
with $n\geq3$. For global solutions, nonvanishing at a single time implies
the lower bound $ce^{-Ct}$ when $v$ is small in the scale-invariant
space $L^\infty_tL^{n/2}_x$, and $ce^{-Ct^2}$ throughout the
subcritical range $v\in L^\infty_tL^p_x$, with $p>n/2$, without
smallness. The linear time exponent is optimal already for the free
heat equation, while we show that the quadratic exponent is optimal
for bounded complex
potentials on~$\R^3$. No additional regularity of the potential is
required. The proof uses a convexified mixed Carleman estimate whose
norms are designed together with the final time truncation.
Integration over spatial centers removes the heat-kernel weight and
yields quantitative comparisons of the full solution norms.
To prove quadratic sharpness in the Euclidean energy class, we
construct a time-dependent function $g$ that localizes Meshkov's
periodic parabolic example while keeping the induced potential
bounded across the zeros of the periodic profile.
\end{abstract}
\colb

\noindent\thanks{\em Keywords:\/}
Parabolic equations, quantitative decay,
critical potentials, Carleman estimates, backward uniqueness.

\section{Introduction}\label{sec01}
We study the long-time decay of nonzero solutions of
\begin{align*}
  \partial_tu-\Delta u=v(t,x)u
  \qquad\text{on }(T_0,\infty)\times\R^n,
  \qquad n\geq3,
\end{align*}
when the potential belongs to $L^\infty_tL^p_x$, with $p\geq n/2$.
The equation is understood in the sense of distributions, and we work in the
energy class
\begin{align*}
  u\in C([T_0,T_*];L^2(\R^n))
  \cap L^2((T_0,T_*);H^1(\R^n)),
\end{align*}
for every finite $T_*>T_0$. Both $u$ and $v$ may be complex-valued.
The question is how much
spatial integrability of the potential suffices to prevent rapid decay
of the full $L^2$-norm, without additional assumptions on the
potential's regularity or the solution's frequency and concentration.

Our main results have a simple consequence: If the $L^2$-norm is
nonzero at one time, then it admits an exponential or Gaussian lower
bound at all sufficiently late times. More precisely, suppose that
$\|u(T_0)\|_{L^2_x}>0$. For a sufficiently small critical potential,
\begin{align*}
  \|v\|_{L^\infty_tL^{n/2}_x}\leq\epsilon_n
  \quad\Longrightarrow\quad
  \|u(t)\|_{L^2_x}\geq ce^{-Ct},
  \qquad t\geq T_0+1.
\end{align*}
For every subcritical exponent $p>n/2$, including $p=\infty$,
no smallness is needed and we have
\begin{align*}
  v\in L^\infty_tL^p_x
  \quad\Longrightarrow\quad
  \|u(t)\|_{L^2_x}\geq ce^{-Ct^2},
  \qquad t\geq T_0+1.
\end{align*}
Here the potential norms are taken over $(T_0,\infty)\times\R^n$,
$\epsilon_n>0$ depends only on $n$, and $c,C>0$ may depend on the
solution and the potential but are independent of~$t$.
Theorems~\ref*{T01} and~\ref*{T02} give quantitative comparisons
on finite time intervals, with explicit dependence on two earlier
solution norms; the formulation above is Corollary~\ref*{C01}.

The linear time exponent is optimal already for the free heat
equation: Initial data with Fourier support in a fixed annulus
separated from the origin give exponential decay. The quadratic
exponent is optimal for bounded complex potentials on $\R^3$, as
shown by the global energy solution in Theorem~\ref*{T03}.
The subcritical estimate thus extends the bounded-potential Gaussian
scale to every $p>n/2$, whereas smallness at the critical
integrability yields the sharper exponential scale.

The connection between decay and uniqueness goes back to
Lax~\cite{Lax1956}, Lions and Malgrange~\cite{LionsMalgrange1960},
and the Hilbert-space method of Agmon and
Nirenberg~\cite{AgmonNirenberg1967}; see also
Ogawa~\cite{Ogawa1967}. The bounded-potential calculation explains
both the Gaussian time scale and the difficulty with integrable
potentials. For a sufficiently regular solution with positive norm,
set
\begin{align*}
  Q(t)=\frac{\|\nabla u(t)\|_{L^2_x}^2}{\|u(t)\|_{L^2_x}^2}
\end{align*}
for the Dirichlet quotient, introduced by Agmon and Nirenberg
in~\cite{AgmonNirenberg1967}.
With $M=\|v\|_{L^\infty_{t,x}}$, direct differentiation and
completion of the square give
\begin{align*}
\begin{split}
  Q'(t)
  &=-\frac{2\|\Delta u+Q(t)u+\frac12vu\|_{L^2_x}^2}
  {\|u\|_{L^2_x}^2}
  +\frac{\|vu\|_{L^2_x}^2}{2\|u\|_{L^2_x}^2}
  \leq\frac{M^2}{2}
\end{split}
\end{align*}
and
\begin{align*}
\begin{split}
  \frac{\text{d}}{\text{d}t}\log\|u(t)\|_{L^2_x}
  &=-Q(t)+\frac{\operatorname{Re}\int_{\R^n}v|u|^2\,\text{d}x}
  {\|u\|_{L^2_x}^2}
  \geq-Q(t)-M.
\end{split}
\end{align*}
All quantities on the right are evaluated at~$t$.
The quotient grows at most linearly, and integration gives a
quadratic exponent in the lower bound. Averaging the second identity
over an earlier interval replaces the initial quotient by two
solution norms. This calculation applies equally to complex-valued
solutions and potentials.

At the critical exponent, Sobolev embedding gives 
\begin{align*}
\begin{split}
  \left|\int_{\R^n}v|u|^2\,\text{d}x\right|
  &\leq C\|v\|_{L^{n/2}_x}\|\nabla u\|_{L^2_x}^2
\end{split}
\end{align*}
and
\begin{align*}
\begin{split}
  \|vu\|_{L^2_tL^{\frac{2n}{n+2}}_x}
  &\leq C\|v\|_{L^\infty_tL^{n/2}_x}
  \|\nabla u\|_{L^2_{t,x}}.
\end{split}
\end{align*}
The first estimate yields energy monotonicity under smallness, but
the second places the source in a dual Sobolev space, not in the
$L^2$ space required by the quotient identity. For finite subcritical~$p$, the ratio $\|vu\|_{L^2_x}/\|u\|_{L^2_x}$ is likewise not
controlled by $\|v\|_{L^p_x}$ alone. Retaining a linear exponent at
the small critical endpoint and a quadratic exponent for all
$p>n/2$ therefore requires an estimate compatible with this weaker
source control. Related backward uniqueness results for lower order
perturbations appear in~\cite{Kukavica2004}.
Del Santo and Prizzi~\cite{DelSantoPrizzi2007} exclude superexponential decay
under time integrability assumptions and suitable time regularity
of the principal coefficients.

The appropriate spatial estimates belong to the theory of unique
continuation. The foundational work of
Carleman~\cite{Carleman1939} and Aronszajn~\cite{Aronszajn1957},
and H\"ormander's general theory~\cite{Hormander1985}, established
the role of weighted inequalities. For rough potentials, the spatial
norms in these inequalities are essential. Jerison and
Kenig~\cite{JerisonKenig1985} proved strong unique continuation at
the critical elliptic integrability $L^{n/2}_{\mathrm{loc}}$,
while the uniform Sobolev estimates of Kenig, Ruiz, and
Sogge~\cite{KenigRuizSogge1987} provide a basic mechanism for
controlling rough lower order terms in different spatial norms.

For parabolic equations, Sogge~\cite{Sogge1990} and
Escauriaza~\cite{Escauriaza2000} developed Carleman estimates for
integrable potentials. Escauriaza and Vega~\cite{EscauriazaVega2001}
reached the critical spatial exponent $n/2$ through Hermite
resolvent estimates and a Littlewood-Paley decomposition in time.
Their results include strong unique continuation for small
$L^\infty_tL^{n/2}_x$ potentials, under the corresponding vanishing
and growth hypotheses. Koch and Tataru~\cite{KT2009} developed a
framework combining convexity, Hermite estimates, localized
parametrices, and duality for operators with nonsmooth principal
coefficients and unbounded lower order terms; their Hermite
bounds~\cite{KochTataru2005Hermite} are part of this theory.
Jeong, Lee, and Ryu~\cite{JeongLeeRyu2024} subsequently obtained
strong unique continuation for small potentials in the larger weak
space~$L^\infty_tL^{n/2,\infty}_x$.

A complementary approach uses frequency functions and monotonicity.
The elliptic work of Garofalo and Lin~\cite{GarofaloLin1986}
has parabolic counterparts in the work of Lin, Kurata, Poon, and
Chen~\cite{Lin1990,Kurata1994,Poon1996,Chen1998}, while Colding and
Minicozzi~\cite{ColdingMinicozzi2022} established parabolic frequency
monotonicity on manifolds. Variable-coefficient continuation and
backward uniqueness on unbounded domains were studied by
Escauriaza and Fern\'andez~\cite{EscauriazaFernandez2003} and
Escauriaza, Seregin, and \v{S}ver\'ak~\cite{EscauriazaSereginSverak2003}.
For quantitative bounds, Camliyurt and one of the
authors~\cite{CamliyurtKukavica2018} treated vanishing order with
bounded lower order coefficients, and~\cite{KukavicaLe2022} obtained vanishing-order and fixed-time doubling
estimates for integrable coefficients, including potentials with
$p>2n/3$.

Quantitative continuation also connects to nodal set bounds, as in
Donnelly and Fefferman~\cite{DonnellyFefferman1988}, and to spectral
localization, as in Bourgain and Kenig~\cite{BourgainKenig2005}.
For parabolic equations, Escauriaza, Kenig, Ponce, and Vega proved
lower bounds at spatial infinity~\cite{EKPV2006} and sharp Gaussian
uncertainty principles using logarithmic convexity~\cite{EKPV2016}.
The latter allow bounded complex potentials in the energy class.
These spatial decay questions and the local vanishing results above
provide the background for our temporal problem: quantitative
nonvanishing of the full norm at every later time, with no prescribed
spatial localization of the solution.

Our proof is organized around a mixed weighted estimate designed
together with the final time truncation. We use the critical spatial
input of Escauriaza and Vega in the parametrix formulation of
Koch and Tataru~\cite[Proposition~5.1]{KT2009}. For the quadratic
weight
\begin{align*}
  h(s)=as+\frac A2s^2,
\end{align*}
we combine three components in the solution and source norms:
the temporal endpoints $L^\infty_sL^2_y$ and $L^1_sL^2_y$, the
critical Sobolev pair $L^2_sL^{\frac{2n}{n-2}}_y$ and
$L^2_sL^{\frac{2n}{n+2}}_y$, and $L^2_{s,y}$ terms with respective
factors $A^{1/4}$ and~$A^{-1/4}$. Following the parametrix,
coercivity, and duality construction in~\cite{KT2009},
Proposition~\ref{P01} establishes this estimate with constants
uniform in the convexity and slope parameters under the stated
conditions. This combination keeps the time endpoints while
separating the effects of critical integrability and the size of a
bounded potential.

To obtain the full spatial norm, we apply the estimate at every
physical-space center and integrate over the centers. The identity
\begin{align*}
  \int_{\R^n}\int_{\R^n}
  e^{-|y|^2/4}|F(z+\sqrt\rho\,y)|^2
  \,\text{d}y\,\text{d}z
  =(4\pi)^{n/2}\|F\|_{L^2_x}^2,
\end{align*}
for $F\in L^2(\R^n)$ and $\rho>0$, removes the Gaussian weight
exactly. Minkowski's inequality preserves the $L^1$ time norm of
the source, so a time cutoff $\chi$ costs only
$\int|\chi'(t)|\,\text{d}t$, independently of the transition
lengths. For a comparison at $t_1<t_2<t_3$, extending the solution
past $t_3$ by the free heat flow
controls the final cutoff by $\|u(t_3)\|_{L^2_x}$; the slope $a$
is then chosen from the earlier norms to absorb the initial cutoff.
Our use of time extension is inspired by
Huang and Xu~\cite{HuangXu2026}. The estimate is formulated directly
for energy solutions, and continuity into $L^2_x$ permits evaluation
at the prescribed times.

At the small critical endpoint, the critical Sobolev component
absorbs $vu$ with fixed convexity, and the slope produces the linear
time exponent. For a general subcritical potential, we decompose
\begin{align*}
  v=v_c+v_b,
  \wherewhere  
  \|v_c\|_{L^\infty_tL^{n/2}_x}\leq\epsilon,\qquad
  \|v_b\|_{L^\infty_{t,x}}\leq R.
\end{align*}
The same critical estimate absorbs $v_cu$, while the explicit
$L^2$ gain absorbs $v_bu$ once
\begin{align*}
  A\geq C\left(1+R^2(t_3-t_1)^2\right).
\end{align*}
Thus the slope compares the solution norms and the convexity
controls the bounded part of the potential. This gives the
quadratic time dependence in one comparison over the entire
interval, and explains why the critical endpoint is a separate
argument rather than a limit as $p$ decreases to~$n/2$.

The sharpness construction has a different source. Meshkov's
work~\cite{Meshkov} contains both the elliptic example with spatial
decay $e^{-c|x|^{4/3}}$ and, in Section~3, a parabolic example on
$\T^3$ with decay $e^{-ct^2}$;
see also~\cite{CKW} for the sharp estimate on the order of vanishing
for elliptic equations.

The elliptic construction also enters
the sharp dependence of observability constants on potentials;
see Duyckaerts, Zhang, and Zuazua~\cite{DuyckaertsZhangZuazua2008}.
We use Meshkov's periodic parabolic construction, whose successive
transfers to higher Fourier modes generate Gaussian time decay,
and realize it in the global energy class on~$\R^3$.

The additional issue is to localize the periodic profile without
creating singularities in the potential at its zeros. If
$(\partial_t-\Delta)W=qW$ and $\tilde u=gW$, then
\begin{align*}
  (\partial_t-\Delta)\tilde u
  =q\tilde u+W(\partial_tg-\Delta g)-2\nabla g\cdot\nabla W.
\end{align*}
A general decaying envelope need not control the last term
near cancellations of~$W$. We construct a positive, slowly
expanding envelope $g$ with uniformly bounded $L^2$-norm, whose
derivatives in the cancellation directions vanish at the required
rate. In particular,
\begin{align*}
  |\partial_tg|+|\Delta g|\leq Cg
  \andand
  |\nabla g\cdot\nabla W|\leq Cg|W|.
\end{align*}
These estimates preserve boundedness of the quotient potential
across every zero of $W$, while the envelope preserves Gaussian
time decay and gives spatial square integrability. Consequently,
Theorem~\ref{T03} yields a nonzero global energy solution on $\R^3$
with $0<\|u(t)\|_{L^2_x}\leq Ce^{-ct^2}$ for $t\geq0$, proving
optimality of the quadratic time exponent for bounded complex
potentials in Euclidean space.

\colb

\section{Main Result}\label{sec02}
Throughout the paper, solutions and potentials may be real- or complex-valued. 

\cole
\begin{Theorem}[Subcritical Gaussian-type lower bound]
\label{T01}
Let $n\geq 3$, $p\in(n/2,\infty]$, and $T_0<T_*$. Suppose
    \begin{equation}
    v\in L^\infty\left((T_0,T_*);L^p(\R^n)\right)
    ,
   \label{EQ18}
     \end{equation}
and set
$
M=\|v\|_{L^\infty_t L^p_x}
$.
Assume that 
$
    u\in C\left([T_0,T_*];L^2(\R^n)\right)\cap L^2\left((T_0,T_*);H^1(\R^n)\right)
$
satisfies
    \begin{align}
        \label{EQ01}
        \partial_t u-\Delta u=v(t,x) u
      .
    \end{align}
There exists a sufficiently small constant $\epsilon\in(0,1)$, 
depending only on $n$, with the following property.
Fix $T_0\leq t_1<t_2<t_3\leq T_*$. If $\|u(t_i)\|_{L^2_x}>0$ for $i=1$, $2$, 
then
\begin{align}
    \|u(t_3)\|_{L^2_x}\geq \|u(t_2)\|_{L^2_x}\exp\left(
    -C\left(
    R^2(t_3-t_1)^2+\frac{t_3-t_1}{t_2-t_1}\left(1+\log_+\frac{\|u(t_1)\|_{L^2_x}}
    {\|u(t_2)\|_{L^2_x}} \right)+1
    \right)
    \right),
\end{align}
where 
  \begin{align}
  \begin{split}
   R
   =
       \begin{cases}
       \epsilon^{-\frac{n}{2p-n}}M^{\frac{2p}{2p-n}},
       & p<\infty
       \\
       M
              &   p=\infty.
    \end{cases}
  \end{split}
   \label{EQ17}
  \end{align}
\end{Theorem}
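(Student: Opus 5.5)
The plan follows the strategy outlined in the introduction: a mixed Carleman estimate (Proposition~\ref{P01}) applied at every spatial center, integrated over centers, and combined with a time cutoff and free heat extension.

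\textbf{Step 1: splitting the potential.} I would first decompose $v=v_c+v_b$ by truncation at a level $\lambda$:
\begin{align*}
v_b=v\,\mathbf 1_{|v|\le\lambda},\qquad v_c=v\,\mathbf 1_{|v|>\lambda}.
\end{align*}
For $p<\infty$, Chebyshev gives, at each time,
\begin{align*}
\|v_c\|_{L^{n/2}_x}\le \lambda^{1-2p/n}\|v\|_{L^p_x}^{2p/n}.
\end{align*}
Choosing $\lambda=R=\epsilon^{-n/(2p-n)}M^{2p/(2p-n)}$ makes $\|v_c\|_{L^\infty_tL^{n/2}_x}\le\epsilon$ and $\|v_b\|_{L^\infty}\le R$. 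This is exactly the formula \eqref{EQ17}. When $p=\infty$, I would set $v_c=0$ and $R=M$.

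\textbf{Step 2: extension and cutoff.} Next I would extend $u$ past $t_3$ by the free heat flow; on that portion $v$ is replaced by zero. I would then choose a time cutoff $\chi$ with the following profile: it vanishes before $t_1$, rises on $[t_1,t_2]$ with $\int|\chi'|\le 1$ there, equals $1$ up to $t_3$, and decays at large times, where it acts only on the heat extension.

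The function $w=\chi u$ satisfies
\begin{align*}
(\partial_t-\Delta)w=v_cw+v_bw+\chi'u.
\end{align*}
I would apply Proposition~\ref{P01} in self-similar variables $(s,y)$ centered at each point $z$ and square-integrate over $z$, using the Gaussian identity from the introduction. This removes the weight exactly, and Minkowski's inequality keeps the $L^1_sL^2_y$ norm of the cutoff source. The term $v_cw$ is absorbed by the critical Sobolev pair, with constants uniform in $a$ and $A$, since $\epsilon$ is small. The term $v_bw$ is placed in the $L^2$ component with factor $A^{-1/4}$. After conversion back to physical time, it costs
\begin{align*}
A^{-1/2}R\,(t_3-t_1)
\end{align*}
relative to the $A^{1/4}$-weighted solution norm. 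It is therefore absorbed once $A\ge C(1+R^2(t_3-t_1)^2)$.

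\textbf{Step 3: comparing the norms.} The left side, through the $L^\infty_sL^2$ component, dominates $e^{h}\|u(t_2)\|$ up to Gaussian-weight normalization. The right side contains two contributions:
\begin{itemize}
\item[(i)] the initial cutoff, bounded by $e^{h(s(t_1))}\sup_{[t_1,t_2]}\|u\|$. By energy estimates, valid since $\|v_c\|_{L^{n/2}}$ is small and $v_b$ is bounded, this is controlled by $e^{C R^2(t_2-t_1)}\|u(t_1)\|$.
\item[(ii)] the final cutoff on the heat extension, bounded by $e^{h(s(t_3))}\|u(t_3)\|$, since the heat flow is contractive.
\end{itemize}
I would pick the slope $a\approx \frac{C}{t_2-t_1}\bigl(1+\log_+\frac{\|u(t_1)\|}{\|u(t_2)\|}\bigr)$, up to the convention relating $s$ to time, so that the weight gap between $t_1$ and $t_2$ absorbs (i) into the left side. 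What remains is
\begin{align*}
\|u(t_2)\|\le e^{h(s_3)-h(s_2)}\|u(t_3)\|.
\end{align*}
Here
\begin{align*}
h(s_3)-h(s_2)\lesssim a(t_3-t_1)+A(t_3-t_1)^0+1,
\end{align*}
which yields the stated exponent.

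\textbf{Main obstacle.} The delicate step is the bookkeeping of the weight and the time variable. The weight $h$ must be monotone, so that the earlier cutoff sits at a lower weight than $t_2$. The quadratic part $\frac A2 s^2$ must contribute only $O(A)=O(1+R^2(t_3-t_1)^2)$ on the normalized interval rather than more. This likely requires rescaling time so that $[t_1,t_3]$ maps to a unit-length $s$-interval. The $A^{\pm1/4}$ factors must then match the $L^2$ gain against $R$ after that scaling. I would first fix the rescaling $s=(t-t_1)/(t_3-t_1)$ and check that the absorption condition takes exactly the form $A\gtrsim R^2(t_3-t_1)^2$.
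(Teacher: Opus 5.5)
Your overall architecture is the paper's: truncation at level $R$, free heat extension past $t_3$, a time cutoff whose derivative cost survives the averaging over centers, the slope $a$ absorbing the early cutoff, and the convexity $A$ absorbing $v_b$. The step you single out as the main obstacle, however, is resolved in a way that fails. Proposition~\ref{P01} is an estimate for $\partial_s+\mathcal H-h'(s)$. Both the harmonic oscillator and the Gaussian $e^{-|y|^2/8}$ removed by the averaging identity arise only from the similarity change $t=T_1-s_1e^{-s}$, $x=z+\sqrt{s_1e^{-s}}\,y$, followed by conjugation with $e^{-|y|^2/8}$. A linear rescaling $s=(t-t_1)/(t_3-t_1)$ instead produces $\partial_s-\Delta_y$, for which no such mixed estimate is available.

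What is actually needed is a choice of the pole $T_1$ and the scale $s_1$ in the logarithmic variable. The paper takes $T_1=4t_3-3t_1$ and $s_1=4(t_3-t_1)$, and places the final cutoff inside the heat extension on $[2t_3-t_1,3t_3-2t_1]$. Then $\operatorname{supp}\chi$ corresponds to $s\in[0,\log4]$, and $\rho=T_1-t\in[s_1/4,s_1]$ there. This one choice gives three things:
\begin{enumerate}
\item the fixed $s$-interval, hence $\phi(\log4)-\phi(s(t_2))\le C(a+A)$;
\item the bound $\|\rho V_{\bdd}\|_{L^\infty}\le s_1R$, which is what underlies $A\gtrsim 1+R^2(t_3-t_1)^2$;
\item $s(t_2)-s(t_1+\delta)\sim (t_2-t_1)/(t_3-t_1)$, which turns the slope into the factor $\frac{t_3-t_1}{t_2-t_1}$.
\end{enumerate}

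There are also three slips in your Step~3.
\begin{enumerate}
\item Your bound for the early-cutoff term uses the weight at $t_1$. The weight is increasing in $t$, so on a transition interval reaching $t_2$ it is not controlled by its value at $t_1$, and there is no exponential gap to exploit. $\chi$ must finish rising a fixed fraction of $t_2-t_1$ before $t_2$; the paper uses $[t_1,t_1+\delta]$ with $\delta=(t_2-t_1)/8$. The relevant weight is then the one at $t_1+\delta$.
\item The energy estimate gives $\|u(t)\|_{L^2_x}\le e^{R(t-t_1)}\|u(t_1)\|_{L^2_x}$, not $e^{CR^2(t_2-t_1)}$. Your version fails for small $R$: take $v\equiv R$ and low-frequency data. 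If it were fed into the choice of slope, it would add $CR^2(t_3-t_1)$ to the exponent, which is not controlled by $C(1+R^2(t_3-t_1)^2)$ when $t_3-t_1$ is small. The correct factor $e^{R\delta}$ contributes only a multiple of $R(t_3-t_1)\le 1+R^2(t_3-t_1)^2$.
\item The late-cutoff term carries the weight at the end of $\operatorname{supp}\chi$, namely $e^{\phi(\log 4)}$, not $e^{h(s(t_3))}$. With the bounded $s$-interval this costs nothing extra.
\end{enumerate}
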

\colb

Above, we have denoted
$
\log_+ r=\max\{\log r,0\}
$ for $r>0$.
Here and below, the symbols $C$ and~$c$ denote sufficiently large and
small positive constants, respectively, which may change from line to line.
We also henceforth allow all constants to depend on $n$ without mention.

Since $R^2$ is a fixed multiple, depending only on $n$ and $p$, of $M^{\frac{4p}{2p-n}}$ when
$p<\infty$, Theorem~\ref{T01} yields the following large-time bound: 
\[
\|u(t)\|_{L^2_x}\geq c\exp\left(-C\left(M^\frac{4p}{2p-n}(t-t_1)^2
+(t-t_1)(t_2-t_1)^{-1}\right)\right),
\]
where $c,C>0$ depend on $\|u(t_i)\|_{L^2_x}$ for $i=1,2$, $n$, $p$, and $t_2-t_1$, but not on~$t$.
When $p=\infty$, the corresponding first term in the exponent is $M^2(t-t_1)^2$.

\cole
\begin{Theorem}
[Lower bound for small critical potentials]
    \label{T02}
Let $u$ be as in the previous theorem, and assume that $v$ satisfies
\eqref{EQ18} with $p=n/2$.
Then there exists
a sufficiently small constant $\epsilon_n\in(0,1)$ such that
if
    \[
    \|v\|_{L^\infty_tL^{n/2}_x}\leq \epsilon_n,
    \]
then, under the condition $\|u(t_i)\|_{L^2_x}>0$ for $i=1,2$, we have
    \begin{align}
        \|u(t_3)\|_{L^2_x}\geq \|u(t_2)\|_{L^2_x}\exp
        \left(-C\frac{t_3-t_1}{t_2-t_1}
        \left(
        1+\log \frac{\|u(t_1)\|_{L^2_x}}
        {\|u(t_2)\|_{L^2_x}}
        \right)
        \right).
        \label{EQ03}
    \end{align}
\end{Theorem}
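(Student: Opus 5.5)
We would prove Theorem~\ref{T02} with the same scheme as Theorem~\ref{T01}, using the mixed Carleman estimate of Proposition~\ref{P01} with fixed convexity $A=A_0$ and slope $a$ free. First we normalize time. Translate so that the terminal time is the origin of the backward variable $s=t_3-t$, and rescale parabolically. The norm $\|v\|_{L^\infty_tL^{n/2}_x}$ is scale invariant, so smallness survives the rescaling. Next we extend $u$ past $t_3$ by the free heat flow $e^{(t-t_3)\Delta}u(t_3)$. On the extension the equation holds with $v=0$, and the $L^\infty_tL^2_x$ norm of the extended part is at most $\|u(t_3)\|_{L^2_x}$.

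We then take a time cutoff $\chi$ equal to $1$ on $[t_2,t_3]$ (or slightly beyond, into the extension region) and vanishing before $t_1$, and set $w=\chi u$. Then
\[
(\partial_t-\Delta)w=v w+\chi' u .
\]
We apply Proposition~\ref{P01} centered at every point $z\in\R^n$ with the Gaussian weight, and then integrate over $z$. By the identity in the introduction the Gaussian disappears and we obtain full $L^2_x$ norms. By Minkowski, the source term $\chi'u$ is measured in $L^1_tL^2_x$ and costs
\[
\int|\chi'|\,e^{h}\,\sup\|u\|_{L^2_x}.
\]
The term $vw$ is placed in $L^2_tL^{2n/(n+2)}_x$, and Hölder gives a bound by $\epsilon_n\|w\|_{L^2_tL^{2n/(n-2)}_x}$ with the weight attached. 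Since the constants in Proposition~\ref{P01} are uniform in $a$, choosing $\epsilon_n$ small lets us absorb this term into the left side. No $L^2$ gain is needed, so $A$ stays fixed.

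What remains is the comparison. The left side (the $L^\infty_sL^2$ endpoint) controls $e^{h(s_2)}\|u(t_2)\|_{L^2_x}$. The right side contains two contributions: the final-cutoff/extension part, of size about $e^{h(0)}\|u(t_3)\|_{L^2_x}$, and the initial cutoff on $[t_1,t_2]$, of size about $e^{h(s_1)}\sup_{[t_1,t_2]}\|u\|_{L^2_x}$. For this last supremum we use the energy bound: under smallness, $\|u(t)\|_{L^2_x}\le\|u(t_1)\|_{L^2_x}$. Since $h$ increases in $s$ while $s_1>s_2$, the initial-cutoff term is the dangerous one. We therefore place the cutoff transition in a region where we can compare. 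I would need to get the orientation right. With $h(s)=as+\frac{A_0}{2}s^2$ and $s$ measured so that larger weight sits at the comparison time $t_2$ relative to $t_1$, we choose
\[
a\approx \frac{C}{t_2-t_1}\Bigl(1+\log\frac{\|u(t_1)\|_{L^2_x}}{\|u(t_2)\|_{L^2_x}}\Bigr)
\]
so that
\[
e^{a(t_2-t_1)}\ge 2C\,\frac{\|u(t_1)\|_{L^2_x}}{\|u(t_2)\|_{L^2_x}} ,
\]
and the $t_1$ term is absorbed into half the left side. This leaves
\[
\|u(t_2)\|_{L^2_x}\le C e^{C a (t_3-t_1)+C}\|u(t_3)\|_{L^2_x},
\]
which is exactly \eqref{EQ03} after rescaling back; the additive constant is absorbed since the coefficient of $(t_3-t_1)/(t_2-t_1)$ includes $1$. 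The quadratic part of $h$ contributes only $A_0(t_3-t_1)^2$ in the rescaled unit variables, that is, a constant. So we rescale time so that $t_3-t_1=1$ before applying the proposition. This is legitimate because the critical norm is invariant and the final bound is scale invariant in the ratio $(t_3-t_1)/(t_2-t_1)$.

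The main obstacle I expect is the bookkeeping of the weight direction together with the cutoff. The cutoff must be placed so that its $\int|\chi'|$ cost near $t_1$ is evaluated where $e^h$ is smaller than at $t_2$ by the factor $e^{-a(t_2-t_1)}$ in suitable units. The final truncation must also be handled through the heat extension, so that no uncontrolled boundary term at $t_3$ remains. A further technical point is justifying Proposition~\ref{P01} for energy-class $w$, since the extended function must belong to the norms for which the estimate holds. I would handle this by mollifying in $x$, using the fact that all norms are continuous in the energy class, and passing to the limit.
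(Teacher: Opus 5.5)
Your proposal is correct and is essentially the paper's argument. The paper proves Theorem~\ref{T02} by rerunning the proof of Theorem~\ref{T01} with $v_c=v$, $v_b=0$, $R=0$ and fixed $A=A_n$: heat-flow extension past $t_3$, a Gaussian-weighted Carleman estimate at every center averaged over $z$ with the source paid in $L^1_tL^2_x$ via Minkowski, absorption of $vu$ through the $L^2_sL^{2n/(n-2)}_y$ component, energy monotonicity on the initial transition, and the same choice of $a$. The orientation you flag is settled automatically, since Proposition~\ref{P01} forces $h'>0$ in the forward similarity variable; your literal choice $s=t_3-t$ (center at $t_3$) would not work, but placing the similarity center beyond the cutoff's support does (the paper takes $T_1=4t_3-3t_1$, so $s(t)=\log\frac{4(t_3-t_1)}{T_1-t}\in[0,\log 4]$ increases with $t$), together with confining the initial transition to $[t_1,t_1+(t_2-t_1)/8]$, which gives $s(t_2)-s(t_1+(t_2-t_1)/8)\gtrsim (t_2-t_1)/(t_3-t_1)$.
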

\colb

\begin{Remark}
\label{R01}
{\rm
If only $\|u(t_1)\|_{L^2_x}$ is prescribed instead of $\|u(t_i)\|_{L^2_x}$, for $i=1,2$, one cannot in general obtain a uniform lower bound for $\|u(t)\|_{L^2_x}$ at later times. This
obstruction is already present for the free heat equation. Indeed,
consider
\[
\partial_tu-\Delta u=0
\withwith
\|u(0)\|_{L^2_x}=c_0>0
.
\]
Taking the Fourier transform gives
\[
\widehat{u}(t,\xi)=e^{-t|\xi|^2}\widehat{u}(0,\xi)\comma t\geq 0.
\]
Assume that
\begin{align*}
\operatorname{supp}\widehat{u}(0,\cdot)
\subset
\left\{\xi\in\mathbb{R}^n:|\xi|\geq N\right\},
\end{align*}
where $N\in\mathbb{N}_+$. Then Plancherel's theorem yields
\[
\|u(t)\|_{L^2_x}\leq e^{-tN^2}\|u(0)\|_{L^2_x}=c_0 e^{-t N^2 }\comma t>0
\]
Since $N$ can be chosen arbitrarily large while
$\|u(0)\|_{L^2_x}=c_0$ remains fixed, no positive lower bound for
$\|u(t)\|_{L^2_x}$ can depend only on the $L^2_x$ norm at a single
time slice. In this sense, the use of the two time-slice quantities
$\|u(t_i)\|_{L^2_x}$, for $i=1,2$, in Theorem~\ref{T01} is essential.
}
\end{Remark}

\cole
\begin{Corollary}[Global decay lower bounds]
\label{C01}
Suppose that $u$ is a global solution of \eqref{EQ01} on
$[T_0,\infty)$ such that
\[
\|u(T_0)\|_{L^2_x}>0
.
\]
If $p\in(n/2,\infty]$ and
\[
v\in L^\infty\left((T_0,\infty);L^p(\R^n)\right),
\]
then there exist constants $c,C>0$, independent of $t$, such that
\begin{align}
\|u(t)\|_{L^2_x}
\geq
c\exp\left(-C t^2\right)
\end{align}
for $t\geq T_0+1$.
If $p=n/2$ and
\begin{align}
\|v\|_{L^\infty((T_0,\infty);L^{n/2}(\R^n))}
\leq\epsilon_n,
\end{align}
where $\epsilon_n$ is as in Theorem~\ref{T02}, then there exist constants $c,C>0$, independent of $t$, such that
\[
\|u(t)\|_{L^2_x}
\geq
c\exp\left(-C t\right)
\]
for $t\geq T_0+1$.
\label{EQ20}
\end{Corollary}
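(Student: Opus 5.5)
We derive Corollary~\ref{C01} directly from Theorems~\ref{T01} and~\ref{T02} by fixing the first two time slices once and for all and letting only $t_3=t$ vary. The one point needing care is to find a second time slice at which the norm is positive.

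\textbf{Step 1: positivity at a second time.} We claim $\|u(T_0+1/2)\|_{L^2_x}>0$. Suppose instead that $u(s)=0$ for some $s>T_0$. Then backward uniqueness forces $u\equiv0$ on $[T_0,s]$, contradicting $\|u(T_0)\|_{L^2_x}>0$. We obtain this backward uniqueness from Theorems~\ref{T01} and~\ref{T02} themselves, applied on $[T_0,s]$ with $t_3=s$. By continuity of $u$ into $L^2_x$, the norm is positive on some interval $[T_0,T_0+\delta]$. Taking $t_1=T_0$ and $t_2=\min\{\delta,s-T_0\}/2+T_0$, both norms at $t_1,t_2$ are positive, and the theorem gives $\|u(s)\|_{L^2_x}>0$. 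Hence $u$ never vanishes after $T_0$; in particular it is nonzero at $t_2:=T_0+1/2$. If $\delta$ is small, we simply use $t_2=T_0+\delta/2$ instead; any fixed choice in $(T_0,T_0+1)$ with positive norm works.

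\textbf{Step 2: subcritical case.} Set $t_1=T_0$ and fix $t_2\in(T_0,T_0+1)$ as above. For $t\ge T_0+1$, apply Theorem~\ref{T01} on $[T_0,t]$ with $t_3=t$. The bound on $v$ over $(T_0,\infty)$ controls $M$, and hence $R$, uniformly in $t$. The exponent is then
\begin{align*}
C\Big(R^2(t-T_0)^2+\frac{t-T_0}{t_2-T_0}\big(1+\log_+\tfrac{\|u(T_0)\|}{\|u(t_2)\|}\big)+1\Big)\le C'(1+t^2),
\end{align*}
with $C'$ depending on $T_0$, $R$, $t_2$, and the two norms, but not on $t$. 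The prefactor $\|u(t_2)\|_{L^2_x}$ is a fixed positive constant. Absorbing $e^{-C'}$ into $c$ gives $\|u(t)\|_{L^2_x}\ge c\,e^{-Ct^2}$.

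\textbf{Step 3: critical case.} We proceed identically with Theorem~\ref{T02}. Its exponent $C\frac{t-T_0}{t_2-T_0}(1+\log\frac{\|u(T_0)\|}{\|u(t_2)\|})$ is affine in $t$. If the logarithm is negative we bound it by $0$; the theorem statement already tolerates this, or else we use $\log_+$ as a weaker form. This yields $c\,e^{-Ct}$.

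The only delicate step is Step~1: guaranteeing a second positive time slice without circularity. Continuity in $L^2$ settles it immediately, since no global backward uniqueness is actually needed for the lower bound, only positivity near $T_0$.
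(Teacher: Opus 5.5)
Your proposal is correct and is essentially the paper's own (implicit) derivation: fix $t_1=T_0$ and some $t_2\in(T_0,T_0+1)$ with $\|u(t_2)\|_{L^2_x}>0$, which continuity into $L^2_x$ provides, and apply Theorem~\ref{T01} or Theorem~\ref{T02} with $t_3=t$. The potential norm on $(T_0,t)$ is dominated by the one on $(T_0,\infty)$, so the exponent is quadratic, respectively affine, in $t$ with $t$-independent coefficients. The backward-uniqueness detour in Step~1 is valid but unnecessary, and in Step~3 the logarithm is in fact nonnegative by the energy monotonicity of Section~\ref{sec0401}, as the paper notes.
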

\colb

\section{The Carleman estimate}
\label{sec03}
\subsection{The frozen Hermite parametrix}
\label{Sec0301}
Consider the harmonic oscillator
\begin{align}
    \label{EQ04}
    \mathcal{H}=-\Delta_y+\frac{|y|^2}{16} \onon{L^2(\R^n)},
\end{align}
whose spectrum is 
\[
\sigma(\mathcal{H})=\left\{\frac{n}{4}+\frac{k}{2}:k\in \mathbb{N}_0\right\}.
\]
After a fixed normalization of \cite[(5.1)-(5.3)]{KT2009}, we obtain the following endpoint parametrix estimate.

\cole
\begin{Lemma}[Frozen endpoint parametrix]
\label{L01}
There exist constants $\tau_0,C_0>0$
such that
whenever
    \[
    \tau\geq \tau_0
    \andand
    \operatorname{dist}(4\tau,\mathbb{Z})\geq \frac{1}{4},
    \]
    the operator
    \[
    L_\tau =\partial_s + \mathcal{H}-\tau
    \]
has a parametrix $K_\tau$, initially defined on
$C_c^\infty(\R^{1+n})$,
and by extending it to a bounded operator from
    \[
    L^1_sL^2_y+L^2_sL^{\frac{2n}{n+2}}_y
    \quad\text{to}\quad
    L^\infty_sL^2_y\cap L^2_sL^{\frac{2n}{n-2}}_y,
    \]
it satisfies $L_\tau K_\tau f=f$ in the sense of distributions and
    $K_\tau L_\tau w=w$ for $w\in C_c^\infty(\R^{1+n})$ and obeys
    \begin{align}
        \label{EQ05}
        \|K_\tau f\|_{L^\infty_s L^2_y}+\|K_\tau f\|_{L^2_sL^{\frac{2n}{n-2}}_y}\leq C_0
        \inf_{f=f_1+f_2}\left(\|f_1\|_{L^1_sL^2_y}+\|f_2\|_{L^2_sL^{\frac{2n}{n+2}}_y}\right).
    \end{align}
    The same estimate holds for the adjoint operator $L^*_\tau=-\partial_s+\mathcal{H}-\tau$.
\end{Lemma}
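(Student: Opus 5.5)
We will build $K_\tau$ by diagonalizing $\mathcal{H}$ in the Hermite basis and solving the resulting ODEs in $s$ mode by mode, splitting each mode forward or backward in time according to its sign. Let $\Pi_k$ be the spectral projection of $\mathcal{H}$ onto the eigenvalue $\lambda_k=\frac n4+\frac k2$, and write $\mu_k=\lambda_k-\tau$. The hypothesis $\operatorname{dist}(4\tau,\mathbb{Z})\geq\frac14$ gives $|\mu_k|\geq\frac1{16}$ for all $k$, since $4\lambda_k\in\frac{n}{1}+2\mathbb{Z}\subset\mathbb{Z}$. We set
\begin{align*}
  K_\tau f(s)=\sum_{\mu_k>0}\int_{-\infty}^{s}e^{-(s-\sigma)\mu_k}\Pi_kf(\sigma)\,\text{d}\sigma
  -\sum_{\mu_k<0}\int_{s}^{\infty}e^{(\sigma-s)|\mu_k|}\Pi_kf(\sigma)\,\text{d}\sigma .
\end{align*}
The first sum is the forward (dissipative) part and the second is the backward part. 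On each mode $(\partial_s+\mu_k)$ inverts exactly, so $L_\tau K_\tau f=f$ and $K_\tau L_\tau w=w$ hold for test functions.

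The $L^1_sL^2_y\to L^\infty_sL^2_y$ bound is immediate, because each kernel is a contraction in $L^2_y$ by orthogonality. The mixed bounds are the substance of the lemma. Here we invoke the Escauriaza–Vega/Koch–Tataru endpoint estimate in the normalization of \cite[Proposition~5.1, (5.1)--(5.3)]{KT2009}. It asserts that $K_\tau$ maps $L^2_sL^{\frac{2n}{n+2}}_y\to L^2_sL^{\frac{2n}{n-2}}_y$ uniformly in $\tau\geq\tau_0$ away from the spectrum. Its proof rests on the Hermite spectral-cluster bounds of \cite{KochTataru2005Hermite}, together with a dyadic decomposition in $|\mu_k|$ and in the time frequency. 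The remaining two cross terms, $L^1_sL^2_y\to L^2_sL^{\frac{2n}{n-2}}_y$ and $L^2_sL^{\frac{2n}{n+2}}_y\to L^\infty_sL^2_y$, are adjoint to each other. They follow from a $TT^*$ argument: the $L^2\to L^2$ estimate of the operator $f\mapsto\int K_\tau(s,\sigma)f\,\text{d}\sigma$ in the critical pair, combined with the $L^2_y$ bound for $\Pi_{\pm}e^{-t\mathcal{H}}$, yields the endpoint Strichartz-type pair.

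To pass from $\mathcal{H}$ with the factor $\frac1{16}$ to the normalization in \cite{KT2009}, we rescale $y\mapsto\lambda y$, $s\mapsto\lambda^2s$. This changes only $C_0$ and the meaning of the spectral gap condition by fixed factors, which is why the gap appears as $\operatorname{dist}(4\tau,\mathbb{Z})\geq\frac14$. The infimum over splittings in \eqref{EQ05} follows by linearity, after proving the bound separately for $f_1$ and $f_2$. Extension from $C_c^\infty$ is by density, and the distributional identity passes to the limit.

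For the adjoint $L_\tau^*$, we apply time reversal $s\mapsto-s$, which maps $L_\tau^*$ to $L_\tau$ and preserves all the norms involved. The estimate for $L_\tau^*$ is therefore the same statement.

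The main obstacle is the critical pair $L^2_sL^{\frac{2n}{n+2}}_y\to L^2_sL^{\frac{2n}{n-2}}_y$ with a constant uniform in $\tau$. This is precisely the content of the cited Proposition~5.1 and is not reproved here. The care lies in matching its hypotheses, namely the separation from the spectrum and the large-$\tau$ regime, to ours.
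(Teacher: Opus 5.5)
Your proposal is correct and follows the same route as the paper, which gives no separate argument and obtains the lemma as a rescaled version of \cite[Proposition~5.1]{KT2009}. The details you add are consistent with that citation: the mode-by-mode forward/backward inverse (which is the only right inverse bounded into $L^\infty_sL^2_y$, because $|\mu_k|\geq\frac1{16}$), the trivial $L^1_sL^2_y\to L^\infty_sL^2_y$ bound, the $TT^*$ derivation of the cross terms, and time reversal for $L_\tau^*$. There is one sign slip: the backward kernel must be $e^{-(\sigma-s)|\mu_k|}$, not $e^{(\sigma-s)|\mu_k|}$, since as written the integral diverges, the contraction claim fails, and the mode equation actually solved is $(\partial_s-\mu_k)u=\Pi_kf$.
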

\colb

This lemma is a rescaled version of~\cite[Proposition~5.1]{KT2009}.

\subsection{The convexified mixed estimate}
\label{sec0302}
We now patch the frozen parametrices from the previous section to derive a mixed Carleman estimate for the convexified weight
\[
h(s)=as+\frac{A}{2}s^2.
\]
For $A\geq 1$, define the weighted norms
\begin{align}
    \begin{split}
    &\|g\|_{X_A}:=\|g\|_{L^\infty_s L^2_y}+\|g\|_{L^2_sL^\frac{2n}{n-2}_y}+A^\frac{1}{4}\|g\|_{L^2_{s,y}},
    \end{split}
   \llabel{EQ19}
    \end{align}
and    
   \begin{align}
    \begin{split}
    \|f\|_{Y_A}:=\inf_{f=f_1+f_2+f_A}
    \left(
    \|f_1\|_{L^1_sL^2_y}+\|f_2\|_{L^2_sL^\frac{2n}{n+2}_y}
    +A^{-\frac{1}{4}}\|f_A\|_{L^2_{s,y}}
    \right).
    \end{split}
   \llabel{EQ29}
   \end{align}
Once the estimate below is proven with the exponent $1/4$, it also holds
with any $\alpha\in(0,1/4]$ in place of $1/4$ by monotonicity of the corresponding norms.
However, for simplicity, we use the endpoint value $1/4$ throughout.

\cole
\begin{Proposition}[Convexified mixed estimate]
    \label{P01}
Let $I_{\rin}\Subset I_{\out}$ be fixed bounded intervals. There exists
$A_0\geq1$, depending only on $I_{\rin}$ and $I_{\out}$, such that the
following holds. For every $A\geq A_0$ and $a\in\R$ such that
    \begin{align}
    \label{EQ06}
    a+As\geq \tau_0+1
\comma
    s\in I_{\out},
   \end{align}
set
    \[
    h(s)=as+\frac{A}{2}s^2
    \onon{\mathbb{R}}.
    \]
    Define
    \[
    L_h=\partial_s+\mathcal{H}-h'(s).
    \]
    Suppose
    \[
    g\in L^2(\R^{1+n}),
    \qquad
    \supp_s g\Subset I_{\rin},
    \]
and, in the sense of distributions,
    \[
    L_hg\in Y_A.
    \]
Then $g\in X_A$ and
    \begin{align}
        \label{EQ07}
        \|g\|_{X_A}\leq C_{I_{\rin},I_{\out}}\|L_h g\|_{Y_A}.
    \end{align}
The constant depends only on the two interval lengths and
    $\operatorname{dist}(I_{\rin},\R\setminus I_{\out})$; it is independent
    of $A$, $a$, and a translation of the two intervals. 
\end{Proposition}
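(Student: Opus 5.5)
We follow the Koch--Tataru patching scheme: localize in time to windows of length about $A^{-1/2}$, on which $h'$ varies by $O(A^{1/2})$ and can be frozen. Fix a smooth partition of unity $\{\chi_j\}$ adapted to $I_{\rin}$, with $\supp\chi_j$ contained in intervals $J_j$ of length $\delta=c_*A^{-1/2}$, bounded overlap, and $|\chi_j'|\lesssim A^{1/2}$. Here $A_0$ is taken large enough that $\delta\ll\operatorname{dist}(I_{\rin},\R\setminus I_{\out})$, so every $J_j$ lies in $I_{\out}$.

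On each $J_j$ pick $s_j$ and choose $\tau_j$ with $|\tau_j-h'(s_j)|\le 1/4$ and $\operatorname{dist}(4\tau_j,\mathbb Z)\ge1/4$. This is possible because the set of admissible $\tau$ has gaps of length at most $1/2$. Condition \eqref{EQ06} gives $\tau_j\ge\tau_0$. Then write
\[
L_{\tau_j}(\chi_jg)=\chi_jL_hg+\chi_j'g+(h'(s)-\tau_j)\chi_jg,
\]
where $|h'(s)-\tau_j|\le A\delta+1/4\lesssim c_*A^{1/2}+1$ on $J_j$. Since $K_{\tau_j}$ inverts $L_{\tau_j}$ on compactly supported functions (after a mollification and density argument, using $g\in L^2$), Lemma~\ref{L01} bounds $\chi_jg$ in $L^\infty_sL^2_y\cap L^2_sL^{2n/(n-2)}_y$. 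The right-hand side is the $L^1L^2+L^2L^{2n/(n+2)}$ norm of $\chi_jL_hg$ plus that of the error terms.

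The $L^2_{s,y}$ piece $f_A$ of $L_hg$ and the errors are not directly in the endpoint spaces, but on a window of length $\delta$ Hölder gives
\[
\|F\|_{L^1(J_j)L^2}\le\delta^{1/2}\|F\|_{L^2(J_j)L^2}=c_*^{1/2}A^{-1/4}\|F\|_{L^2}.
\]
This is exactly how the $A^{\mp1/4}$ weights arise. The errors have size $A^{1/2}\|g\|_{L^2(J_j)}$, which costs $A^{1/4}\|g\|_{L^2(J_j)}$. So the local estimates alone do not close. One needs an independent gain
\[
A^{1/4}\|g\|_{L^2}\lesssim\|L_hg\|_{Y_A}+\text{small}\cdot\|g\|_{X_A}.
\]

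This gain is the main obstacle, and it should come from convexity. For the energy identity, pair $L_hg$ with $g$ and with the commutator. The symmetric part of $L_h$ is $S=\mathcal H-h'(s)$ and the antisymmetric part is $\partial_s$, so
\[
\|L_hg\|^2=\|Sg\|^2+\|\partial_sg\|^2+\langle[S,\partial_s]g,g\rangle,
\]
with $[S,\partial_s]g=h''g=Ag$. This yields $A\|g\|_{L^2}^2\le\|L_hg\|_{L^2}^2$, that is, $A^{1/2}\|g\|\le\|L_hg\|$. It controls $A^{1/4}\|g\|_{L^2}$ by $A^{-1/4}\|f_A\|$ only for purely $L^2$ sources. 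For the endpoint pieces $f_1,f_2$, argue by duality as in \cite{KT2009}. Solve $L_hg_1=f_1+f_2$ via the patched parametrix. Its $L^2$ norm is estimated by interpolating the $L^\infty L^2$ bound over each window together with an almost-orthogonality argument: the spectral projections of $\mathcal H$ near level $h'(s)$ change with $s$ at rate $A$, so a solution stays near resonance for time $O(A^{-1/2})$. This gives $\|g_1\|_{L^2}\lesssim A^{-1/4}(\|f_1\|_{L^1L^2}+\|f_2\|_{L^2L^{2n/(n+2)}})$. The remainder $g-g_1$ solves $L_h(g-g_1)=f_A$ and is handled by the $L^2$ convexity bound. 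Its endpoint norms then follow from the local step, since its source is in $L^2$ with weight $A^{-1/4}$.

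Finally, sum over $j$ using bounded overlap. The $L^\infty L^2$ norm is a supremum and the $L^2L^{2n/(n-2)}$ norm is square-summable over windows; $L^1$ and weighted $L^2$ norms of the sources add. The resulting $\|g\|_{X_A}\le C\|L_hg\|_{Y_A}+Cc_*^{1/2}\|g\|_{X_A}$ is closed by choosing $c_*$ small and then $A_0$ large. All constants depend only on $C_0$, $c_*$ and the interval geometry. The number of windows grows with $A$, but this is harmless because the norms are summed rather than counted. The constants are also translation invariant, since only $h''=A$ and the local freezing enter, and \eqref{EQ06} is used solely to make $\tau_j\ge\tau_0$.
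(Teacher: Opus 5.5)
You correctly isolate the crux, the gain $A^{1/4}\|g\|_{L^2}\lesssim\|L_hg\|_{Y_A}$ when the source has endpoint components, but your resolution of it does not work as written. You let $g_1$ ``solve $L_hg_1=f_1+f_2$ via the patched parametrix'' and then use $L_h(g-g_1)=f_A$. Neither reading of this is available.
\begin{itemize}
\item A patched parametrix $T=\sum_j\chi_jK_{\tau_j}\eta_j$ is not an inverse. One has $L_hTf-f=\sum_j(\chi_j'+(\tau_j-h')\chi_j)K_{\tau_j}(\eta_jf)$, an error with coefficients of size $A^{1/2}$.
\item An exact solution in the class where the convexity bound applies does not exist in general. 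Indeed, $L_h^*$ has the $L^2$ kernel elements $e^{\mu_ks-h(s)}\phi_k(y)$, with $\mathcal H\phi_k=\mu_k\phi_k$. So $L_hg_1=F$ with $g_1\in L^2$ forces $\int\langle F(s),\phi_k\rangle e^{\mu_ks-h(s)}\,ds=0$ for every $k$. The piece $f_1+f_2$ of an arbitrary decomposition need not satisfy these conditions.
\end{itemize}
The ``almost-orthogonality/resonance'' argument you invoke for $\|g_1\|_{L^2}$ is therefore aimed at an object that is not available, and it is not carried out.

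The missing observation is that neither an exact solve nor almost-orthogonality is needed. Take $g_1=T(f_1+f_2)$ with the cut-off patched parametrix on windows of length $\sim A^{-1/2}$. Its $L^2$ gain is Hölder in time, $A^{1/2}\|\chi_jK_{\tau_j}(\eta_jf)\|_{L^2}^2\le CA^{1/2}|\supp\chi_j|\,\|K_{\tau_j}(\eta_jf)\|_{L^\infty L^2}^2$, summed via $\sum_j\|\eta_jf_1\|_{L^1L^2}^2\le\|f_1\|_{L^1L^2}^2$ and $\sum_j\|\eta_jf_2\|_{L^2L^{2n/(n+2)}}^2\le C\|f_2\|_{L^2L^{2n/(n+2)}}^2$. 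The same Hölder step applies to the parametrix error, which is $O(A^{1/2})$ times $K_{\tau_j}(\eta_jf)$ on windows of length $O(A^{-1/2})$. It gives $A^{-1/4}\|L_hTf-f\|_{L^2}\le C\|f\|$, so the error fits exactly into the $f_A$ slot of $Y_A$. You made this computation, but for $\chi_j'g$, where it is circular; for $\chi_j'K_{\tau_j}(\eta_jf)$ it closes because $K_{\tau_j}(\eta_jf)$ is bounded by the source alone. Then $w=g-g_1$ has compact time support and $A^{-1/4}\|L_hw\|_{L^2}\lesssim\|L_hg\|_{Y_A}$, and Lemma~\ref{L02} gives $A^{1/4}\|w\|_{L^2}\le A^{-1/4}\|L_hw\|_{L^2}$; this is the paper's argument. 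After that, your local step for the endpoint norms of $w$ is a legitimate substitute for the paper's duality with adjoint patched parametrices. You must still justify $K_{\tau_j}L_{\tau_j}(\chi_jw)=\chi_jw$, e.g.\ by uniqueness in each Hermite mode, since a homogeneous solution $c\,e^{-(\mu_k-\tau_j)s}$ with $c\neq0$ cannot lie in $L^2_s+L^\infty_s$. Finally, your closing absorption with a factor $c_*^{1/2}$ is not right. On windows of length $c_*A^{-1/2}$ one has $\sup_j\|\chi_j'\|_{L^\infty}\gtrsim c_*^{-1}A^{1/2}$, so the $\chi_j'g$ term costs $c_*^{-1/2}A^{1/4}\|g\|_{L^2(J_j)}$. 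That term must be controlled by the independent $L^2$ bound, not absorbed by shrinking $c_*$.
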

\colb

In all applications below, the intervals are the fixed intervals chosen in
Proposition~\ref{P02}. Thus, we denote the constant in \eqref{EQ07} simply by~$C$.

\begin{proof}[Proof of Proposition~\ref{P01}]
Fix intermediate intervals
    \[
    I_{\rin}\Subset I_0 \Subset I_1\Subset I_{\out}
    \]
    with fixed relative geometry. Choose a nonnegative smooth partition of unity
$\{\eta_j\}_{j\in\mathbb Z}$ on $I_{\rin}$ and cutoffs
$\{\chi_j\}_{j\in\mathbb Z}$ supported in $I_0$ such
that
\begin{enumerate}
\item $\sum_j\eta_j=1$ on $I_{\rin}$;
\item       $\supp\eta_j\subset[s_j-A^{-\frac{1}{2}},s_j+A^{-\frac{1}{2}}]$,
for some $s_j\in\R$;
\item $\chi_j=1$ on $\supp\eta_j$, the support of $\chi_j$ is contained in an interval
      $[s_j-3A^{-\frac{1}{2}},s_j+3A^{-\frac{1}{2}}]$, and
      $|\chi_j'|\leq CA^{\frac{1}{2}}$;
\item 
both families of supports have uniformly bounded overlap, i.e.
there exists an absolute constant $N$, independent of $A$, such that every $s\in I_{0}$ belongs to at most $N$ supports in each of the two families.
\end{enumerate}
Now, choose $A_0$ sufficiently large so that $A^{-\frac{1}{2}}\leq \frac{1}
{10}\dist(I_{\rin},\partial I_0)$, and  $\tau_j$ such that
$4\tau_j$ is a half-integer
nearest to~$4h'(s_j)$.  Then,
\begin{equation}
  |\tau_j-h'(s_j)|\leq \frac{1}{8}
  \andand
  \dist(4\tau_j,\mathbb Z)=\frac{1}{2}.
  \label{EQ08}
\end{equation}
The condition \eqref{EQ06} ensures that $\tau_j\geq\tau_0$. 
When computing the $Y_A$ norm of $L_hg$,
we may assume that all terms in the decomposition are supported in~$I_{\rin}$.
Indeed, since $L_hg$ is supported in $I_{\rin}$,
multiplying each term by $\mathbf{1}_{I_{\rin}}$ preserves the sum and does not increase any of the three norms.
For $f_2\in L^2_sL^{\frac{2n}{n+2}}_y$, define
\begin{align}
    T_2 f_2=\sum_j\chi_j K_{\tau_j}(\eta_j f_2).
\end{align}
The finite overlap property gives 
\begin{align}
\label{EQ12}
\sum_j \|\eta_j f_2\|_{L^2_s L^\frac{2n}{n+2}_y}^2\leq C \|f_2\|^2_{L^2_sL^\frac{2n}{n+2}_y}.   
\end{align}
Writing $u_j=K_{\tau_j}(\eta_j f_2)$, Lemma~\ref{L01} gives
\[
\|u_j\|_{L^\infty_sL^2_y}+\|u_j\|_{L^2_sL^{\frac{2n}{n-2}}_y}
\leq C \|\eta_jf_2\|_{L^2_sL^\frac{2n}{n+2}_y}.
\]
We now estimate the three components of~$\|T_2f_2\|_{X_A}$.
At every time $s$, at most a fixed number of $\chi_j$ are nonzero. Hence, the Cauchy-Schwarz inequality gives
\begin{align}
\label{EQ09}
    \begin{split}
    &\left\|\sum_j \chi_j u_j\right\|^2_{L^\infty_sL^2_y}
    +\left\|\sum_j \chi_j u_j\right\|^2_{L^2_sL^\frac{2n}{n-2}_y}
    \leq C\sum_j
    \|u_j\|^2_{L^\infty_sL^2_y}
    +C\sum_j
    \|u_j\|^2_{L^2_sL^\frac{2n}{n-2}_y}
    \\&\indeq
    \leq C \sum_{j}
    \|\eta_j f_2\|_{L^2_s L^\frac{2n}{n+2}_y}^2.
    \end{split}
\end{align}
Moreover, the definition of cut-off together with H\"older's inequality implies 
\begin{align}
\label{EQ10}
\begin{split}
A^{\frac{1}{2}}\left\|\sum_j\chi_j u_j\right\|_{L^2_{s,y}}^2
&\leq C\sum_j
\underbrace{A^{\frac{1}{2}}\|\chi_j\|^2_{L^2_sL^\infty_y}}
_{\leq A^{\frac{1}{2}}|\operatorname{supp}\chi_j|\leq C}
\|u_j\|^2_{L^\infty_sL^2_y}\\
&\leq C\sum_j
\|\eta_jf_2\|_{L^2_sL^\frac{2n}{n+2}_y}^2
\leq C\|f_2\|_{L^2_sL^\frac{2n}{n+2}_y}^2.
\end{split}
\end{align}
Combining~\eqref{EQ09} and~\eqref{EQ10}, we obtain
\begin{align}
        \label{EQ11}
        \|T_2f_2\|_{X_A}\leq C \|f_2\|_{L^2_sL^\frac{2n}{n+2}_y}.
\end{align}
Since $L_{\tau_j}K_{\tau_j}=I$ and $\chi_j=1$ on $\operatorname{supp}\eta_j$, we derive
\begin{align*}
\begin{split}
(L_hT_2-I)f_2=\sum_j(L_{\tau_j}+(\tau_j-h'))(\chi_jK_{\tau_j}(\eta_j f_2))-f_2
=\sum_j(\chi'_j+(\tau_j-h')\chi_j)K_{\tau_j}(\eta_jf_2).    
\end{split}
\end{align*}
On $\operatorname{supp}\chi_j$, we have
\[
|h'(s)-\tau_j|+|\chi'_j(s)|\leq \underbrace{|h'(s)-h'(s_j)|}_{= A|s-s_j|}+|h'(s_j)-\tau_j|+|\chi'_j(s)|\leq 
C(A^{-\frac{1}{2}} A+1+A^\frac{1}{2})\leq CA^\frac{1}{2},
\]
since $A\geq A_0>1$.
Also, H\"older's inequality on $\operatorname{supp}\chi_j$, Minkowski's inequality,  and~\eqref{EQ12} give
\begin{align*}
    \begin{split}
    &
    \left\|\sum_j\mathbf{1}_{\operatorname{supp}\chi_j}K_{\tau_j}(\eta_j f_2)\right\|_{L^2_{s,y}}^2\leq 
    C
    \sum_j
    \left\|\mathbf{1}_{\operatorname{supp}\chi_j}K_{\tau_j}(\eta_jf_2)\right\|_{L^2_{s,y}}^2
    \\&\indeq
    \leq
    C
    \sum_j
    \|\mathbf{1}_{\operatorname{supp}\chi_j}\|^2_{L^2_sL^\infty_y}
    \|\underbrace{K_{\tau_j}(\eta_jf_2)}_{=u_j}\|^2_{L^\infty_sL^2_y}
    \\&\indeq
    \leq C A^{-\frac{1}{2}}\sum_j \|\eta_j f_2\|_{L^2_s L^\frac{2n}{n+2}_y}^2\leq C A^{-\frac{1}{2}}\|f_2\|_{L^2_sL^\frac{2n}{n+2}_y}^2
    ,
    \end{split}
\end{align*}
from where we get
\begin{align}
\label{EQ13}
\begin{split}
A^{-\frac{1}{4}}\|(L_hT_2-I)f_2\|_{L^2_{s,y}}
&\leq CA^{-\frac{1}{4}}
\left(\sum_j
\|(\chi'_j+(\tau_j-h')\chi_j)u_j\|_{L^2_{s,y}}^2
\right)^{\frac12}\\
&\leq CA^{\frac14}
\left(\sum_j
\|\mathbf{1}_{\operatorname{supp}\chi_j}u_j\|_{L^2_{s,y}}^2
\right)^{\frac12}
\leq C\|f_2\|_{L^2_sL^{\frac{2n}{n+2}}_y}.
\end{split}
\end{align}
For $f_1\in L^1_sL^2_y$, define
\[
T_1f_1=\sum_j\chi_jK_{\tau_j}(\eta_jf_1).
\]
Since $\eta_j\geq0$ and $\sum_j\eta_j=1$ on $\operatorname{supp}f_1$,
we have
\begin{align*}
\sum_j\|\eta_jf_1\|_{L_s^1L_y^2}
&=\int_{\R}\sum_j\eta_j(s)\|f_1(s)\|_{L_y^2}\,\text{d}s
=\|f_1\|_{L_s^1L_y^2}
\end{align*}
and
\begin{align*}
\sum_j\|\eta_jf_1\|_{L_s^1L_y^2}^2
&\leq\left(\sum_j\|\eta_jf_1\|_{L_s^1L_y^2}\right)^2
.
\end{align*}
The same calculations leading to \eqref{EQ11} and \eqref{EQ13} then give
\begin{align} 
\label{EQ14}
        \|T_1f_1\|_{X_A}\leq C \|f_1\|_{L_s^1L^2_y},
\end{align} and
\begin{align} 
\label{EQ15}
A^{-\frac{1}{4}}\|(L_hT_1-I)f_1\|_{L^2_{s,y}}\leq C \|f_1\|_{L_s^1L^2_y}.
\end{align}    
Write $L_h g=f_1+f_2+f_A$, and set
\[
w=g-T_1f_1-T_2f_2.
\]
Inequalities~\eqref{EQ13} and \eqref{EQ15} imply
\begin{align}
    \label{EQ16}
    \begin{split}
    &A^{-\frac{1}{4}}\|L_hw\|_{L^2_{s,y}}
    \leq \sum_{i=1,2}A^{-\frac{1}{4}}\|(L_hT_i-I)f_i\|_{L^2_{s,y}}+A^{-\frac{1}{4}}\|L_hg-f_1-f_2\|_{L^2_{s,y}}
    \\&\indeq
    \leq C\left(
    \|f_1\|_{L^1_sL^2_y}+\|f_2\|_{L^2_sL^\frac{2n}{n+2}_y}+A^{-\frac{1}{4}}\|f_A\|_{L^2_{s,y}}
    \right).
    \end{split}
\end{align}
Choose $\zeta\in C_c^\infty(I_1)$ with $0\leq\zeta\leq1$ and $\zeta=1$
on~$I_0$.  Since $w$ is supported in $I_0$, we have
\[
  \langle w,\phi\rangle=\langle w,\zeta\phi\rangle
\]
for every
$\phi\in C_c^\infty(\R^{1+n})$.
Moreover, multiplication by $\zeta$ is bounded on both
$L^1_sL^2_y$ and $L^2_sL^{\frac{2n}{n+2}}_y$.  
It therefore suffices to consider test functions supported in~$I_1$.
On $I_1$, we repeat the preceding patched-parametrix construction for the adjoint frozen operators
\[
L_\tau^*=-\partial_s+\mathcal{H}-\tau
\]
using cutoffs supported in~$I_{\out}$.
Denote the resulting patched right parametrices by $S_1$ and~$S_2$.  The same calculations give
\begin{align}
 A^\frac{1}{4}\|S_1\phi\|_{L^2_{s,y}}
+A^{-\frac{1}{4}}\|(L_h^*S_1-\operatorname{Id})\phi\|_{L^2_{s,y}}
&\leq C\|\phi\|_{L^1_sL^2_y},
\label{EQ24}
\end{align}
and
\begin{align}
 A^\frac{1}{4}\|S_2\phi\|_{L^2_{s,y}}
+A^{-\frac{1}{4}}\|(L_h^*S_2-\operatorname{Id})\phi\|_{L^2_{s,y}}
&\leq C\|\phi\|_{L^2_sL^{\frac{2n}{n+2}}_y}.
\label{EQ25}
\end{align}
Let $\phi\in C_c^\infty(I_1\times\R^n)$, and let $S$ be either $S_1$
or~$S_2$. Then $S\phi$ has compact time support, belongs to $L^2$,
and~$L_h^*S\phi\in L^2$.  Therefore, the graph Green identity in
Lemma~\ref{L02} applies and gives
\begin{align*}
|\langle w,\phi\rangle|
&=|\langle w,L_h^*S\phi\rangle
 -\langle w,(L_h^*S-\operatorname{Id})\phi\rangle|\\
&\leq
\|A^{-\frac{1}{4}}L_hw\|_{L^2_{s,y}}
\,\|A^\frac{1}{4}S\phi\|_{L^2_{s,y}}
+\|A^{\frac{1}{4}}w\|_{L^2_{s,y}}\,
\|A^{-\frac{1}{4}}(L_h^*S-\operatorname{Id})\phi\|_{L^2_{s,y}}.
\end{align*}
Applying \eqref{EQ24} and \eqref{EQ25}, respectively, and then taking the
supremum over the corresponding dense test classes, we obtain from the
standard duality identities
$
  (L^1_sL^2_y)^*=L^\infty_sL^2_y
$ and
$  \left(L^2_sL^{\frac{2n}{n+2}}_y\right)^*
  =L^2_sL^{\frac{2n}{n-2}}_y
$ 
that
\begin{equation}
\|w\|_{L^\infty_sL^2_y}+\|w\|_{L^2_sL^\frac{2n}{n-2}_y}
\leq C\left(A^\frac{1}{4}\|w\|_2+A^{-\frac{1}{4}}\|L_hw\|_2\right).
\label{EQ27}
\end{equation}
Combining \eqref{EQ27} and \eqref{EQ21}, we obtain
\begin{equation}
\|w\|_{X_A}\leq CA^{-\frac{1}{4}}\|L_hw\|_2.
\label{EQ26}
\end{equation}
Equations \eqref{EQ14}, \eqref{EQ11}, \eqref{EQ21} in Lemma~\ref{L02}, and \eqref{EQ26} imply
\[
\|g\|_{X_A}
\leq C\left(
\|f_1\|_{L^1_sL^2_y}
+\|f_2\|_{L^2_sL^{\frac{2n}{n+2}}_y}
+A^{-\frac{1}{4}}\|f_A\|_2
\right).
\]
Taking the infimum over all admissible decompositions in the definition of the $Y_A$-norm then proves~\eqref{EQ07}.
\end{proof}

\subsection{The physical-space estimate}
We now transfer the mixed Carleman estimate from the similarity variables back to physical space.

\cole
\begin{Proposition}[Physical-space weighted estimate in the energy class]
\label{P02}
Let $J\subset\R$ be a bounded open interval, let $T_1>\sup J$, and let $s_1>0$.  For $t<T_1$, set
\[
\sigma(t)=\frac{T_1-t}{s_1}
\andand
s(t)=-\log\sigma(t)
      =\log\frac{s_1}{T_1-t}.
\]
Assume
\[
V=V_{\crit}+V_{\bdd},
\wherewhere
\|V_{\crit}\|_{L^\infty(J;L^{n/2}(\R^n))}\leq\epsilon,
\qquad
\|V_{\bdd}\|_{L^\infty(J\times\R^n)}\leq R.
\]
Let
\begin{align}
\phi(s)=as+\frac A2(s+2)^2,
\label{EQ30}
\end{align}
where
\begin{equation}
A\geq A_n\left(1+(s_1R)^2\right)\comma
a\geq a_n,
   \label{EQ45}
     \end{equation}
and
where $A_n,a_n$, and $\epsilon>0$ depend only on~$n$.
Suppose
\[
F\in C_c(J;L^2(\R^n))\cap L^2(J;H^1(\R^n)),
\andand
G\in L^1(J;L^2(\R^n))
\]
satisfies
\[
(\partial_t-\Delta)F=VF+G
\]
in $\mathcal{D}'(J\times\R^n)$. Assume further that
\[
\supp_tF\subset\{t\in J:0\leq s(t)\leq\log4\}.
\]
Then
\begin{align}
e^{\phi(s(t_*))}\|F(t_*)\|_{L^2_x}
\leq C\int_J e^{\phi(s(t))}\|G(t)\|_{L^2_x}\,dt
\label{EQ31}
\end{align}
for every $t_*\in J$.
\end{Proposition}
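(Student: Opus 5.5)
The plan is to pass to Gaussian similarity variables centered at a point $z\in\R^n$, apply Proposition~\ref{P01} there, and then integrate over the centers $z$ using the Gaussian identity from the introduction.

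\textbf{Change of variables.} For each center $z$ define
\[
g_z(s,y)=e^{\phi(s)}e^{-|y|^2/8}\,\sigma^{n/4}F\bigl(T_1-s_1e^{-s},\,z+\sqrt{\sigma}\,y\bigr),
\]
possibly with a fixed rescaling $\sqrt{c_n\sigma}$ so that the similarity operator becomes exactly $\mathcal H$ with coefficient $|y|^2/16$. A direct computation should give
\[
L_{\phi}g_z=\bigl(\sigma\tilde V g_z\bigr)+\bigl(\sigma e^{\phi}e^{-|y|^2/8}\sigma^{n/4}\tilde G\bigr),
\qquad L_\phi=\partial_s+\mathcal H-\phi'(s)+\text{const},
\]
where $\tilde V,\tilde G$ are the rescaled potential and source, and $dt=\sigma\,s_1\,ds$ supplies the factor $\sigma s_1$ up to normalization. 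The constant shift in the zeroth order term is absorbed into $a$. Then $\phi(s)=as+\frac A2(s+2)^2$ has $h'(s)=a+2A+As$. The support condition $0\le s\le\log 4$ places $\supp_s g_z$ in the fixed interval $I_{\rin}=[0,\log4]$ inside some $I_{\out}$. Condition \eqref{EQ06} holds because $a\ge a_n$ and $A\ge A_n$, with the $+2$ shift keeping $a+2A+As>0$ on $I_{\out}$.

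\textbf{Absorbing the potential.} Apply Proposition~\ref{P01}, splitting the source into three parts:
\begin{itemize}
\item[(i)] the $G$-part goes in $L^1_sL^2_y$;
\item[(ii)] $\sigma V_{\crit}g_z$ goes in $L^2_sL^{2n/(n+2)}_y$;
\item[(iii)] $\sigma V_{\bdd}g_z$ goes in the $A^{-1/4}L^2$ slot.
\end{itemize}
For (ii), scale invariance shows that $\sigma\tilde V_{\crit}$ has $L^{n/2}_y$ norm equal to $\|V_{\crit}\|_{L^{n/2}_x}\le\epsilon$. Hölder's inequality then bounds this term by $\epsilon\|g_z\|_{L^2_sL^{2n/(n-2)}_y}\le \epsilon\|g_z\|_{X_A}$, which is absorbed for $\epsilon$ small. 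For (iii), $\sigma\le 1$ on the support, but the time conversion brings in $s_1$, so the bounded potential contributes $s_1R$. This gives
\[
A^{-1/4}\,s_1R\,\|g_z\|_{L^2}\le A^{-1/2}s_1R\cdot A^{1/4}\|g_z\|_{L^2},
\]
which is absorbed once $A\ge A_n(1+(s_1R)^2)$. Everything is therefore bounded by the $L^1_sL^2_y$ norm of the $G$-term.

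\textbf{Integrating over centers.} Use the $L^\infty_sL^2_y$ part of $X_A$ at $s_*=s(t_*)$:
\[
e^{\phi(s_*)}\Bigl(\int e^{-|y|^2/4}\sigma_*^{n/2}|F(t_*,z+\sqrt{\sigma_*}y)|^2\,dy\Bigr)^{1/2}\le C\int e^{\phi}\Bigl(\int e^{-|y|^2/4}\sigma^{n/2}|G(t,z+\sqrt\sigma y)|^2\,dy\Bigr)^{1/2}dt,
\]
with the $\sigma$ factors matched by the change of variables. Square both sides and integrate in $z$. On the left, the identity gives $(4\pi)^{n/2}\|F(t_*)\|^2_{L^2_x}$, up to the $\sigma^{n/2}$ normalization, which one arranges to cancel; the prefactor $\sigma^{n/4}$ is included in $g_z$ precisely so that the identity is scale free. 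On the right, Minkowski's integral inequality moves the $L^2_z$ norm inside the $t$-integral, and the identity again gives $\|G(t)\|_{L^2_x}$. This yields \eqref{EQ31}.

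\textbf{Main obstacle.} The main difficulty is justifying the application of Proposition~\ref{P01} to energy-class $F$. One needs $g_z\in L^2$ with $L_\phi g_z\in Y_A$ distributionally, and at finite $s$ the Gaussian weight $e^{-|y|^2/8}$ is harmless. This should follow from $F\in L^2H^1$, since $V_{\crit}F\in L^2L^{2n/(n+2)}$ by Sobolev embedding and $V_{\bdd}F\in L^2$. A second issue is whether the pointwise-in-$t_*$ evaluation is legitimate; it is, given $F\in C(J;L^2)$ and an $L^\infty_s$ bound that holds a.e., upgraded by continuity. A possible complication is that $\sigma\tilde V_{\bdd}$ really carries the factor $\sigma s_1$ rather than $s_1$. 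Since $\sigma\in[1/4,1]$ on the support, this only changes constants.
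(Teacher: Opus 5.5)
Your plan matches the paper's proof: similarity variables centered at each $z\in\R^n$, Proposition~\ref{P01} with the source split into the $L^1_sL^2_y$, $L^2_sL^{\frac{2n}{n+2}}_y$ and $A^{-1/4}L^2$ slots, and absorption of the critical part by scale invariance and of the bounded part via $A^{-1/2}s_1R\le A_n^{-1/2}$. It then uses Minkowski in $t$, the Gaussian identity over centers, and continuity of $F$ into $L^2$ to evaluate at $t_*$.

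\textbf{Spatial scale.} One computation is wrong as written. The spatial scale must be $\sqrt{\rho}$ with $\rho(s)=s_1e^{-s}=s_1\sigma=dt/ds$, not $\sqrt{\sigma}$ or $\sqrt{c_n\sigma}$. With $x=z+\sqrt{\sigma}\,y$ you get $\Delta_y\tilde F=\sigma\Delta_xF$ but $\partial_s\tilde F+\frac12y\cdot\nabla_y\tilde F=s_1\sigma\,\partial_tF$. The transformed operator is then $\sigma(s_1\partial_t-\Delta_x)$, leaving a second-order remainder $(s_1-1)\Delta_y\tilde F$. No constant depending only on $n$ removes it; the rescaling constant has to be $s_1$. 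With $x=z+\sqrt{\rho}\,y$ everything closes:
\begin{itemize}
\item the potentials appear as $\rho V_\alpha$;
\item the identity $\|\rho V_{\crit}(t,z+\sqrt\rho\,\cdot)\|_{L^{n/2}_y}=\|V_{\crit}(t)\|_{L^{n/2}_x}$ holds;
\item $\rho\le s_1$ on the support gives exactly the $s_1R$ your condition on $A$ is built for.
\end{itemize}
This is what your closing remark about the factor ``$\sigma s_1$'' was detecting.

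\textbf{The prefactor $\sigma^{n/4}$.} It is unnecessary, and it does not ``cancel.'' Integrating in $z$ first (Tonelli) gives $\int_{\R^n}\int_{\R^n}e^{-|y|^2/4}|Q(z+\sqrt\rho\,y)|^2\,dy\,dz=(4\pi)^{n/2}\|Q\|_{L^2_x}^2$ for every $\rho>0$, so the identity is already scale free. The paper therefore takes $g_z=e^{\phi}e^{-|y|^2/8}F(t(s),z+\sqrt{\rho}\,y)$ with no prefactor. It carries the resulting shift as $\Phi(s)=\phi(s)+\frac n4 s$, which still has the form required in Proposition~\ref{P01}. Your $\sigma^{n/4}=e^{-ns/4}$ merely removes that shift, so it is harmless. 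It does leave $\sigma_*^{n/4}$ on the left and $\sigma^{n/4}$ inside the right-hand integral. You then bound these using $\sigma\in[\frac14,1]$ on $\supp_tF$, together with $\supp_tG\subset\supp_tF$ (which follows from $G=(\partial_t-\Delta)F-VF$), at a cost of $4^{n/4}$.

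\textbf{Support condition.} Proposition~\ref{P01} needs $\supp_sg_z\Subset I_{\rin}$. So take $I_{\rin}$ open and strictly containing $[0,\log4]$, e.g.\ $(-\frac12,\log4+\frac12)$ as in the paper, with $I_{\out}=(-1,\log4+1)$ so that $s+2\ge1$ there.
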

\colb

\begin{proof}[Proof of Proposition~\ref{P02}]
Since $F$ has compact time support in $J$, it vanishes on a neighborhood of~$\partial J$.  We may therefore extend $F$, $G$, and the two parts of $V$ by zero outside $J$; the extended equation still holds distributionally, with no additional time-boundary term.
Fix the intervals
\[
I_{\rin}=\left(-\frac12,\log4+\frac12\right)
\andand
I_{\out}=(-1,\log4+1).
\]
Choose $A_n$ large enough that $A\geq A_0(n,I_{\rin},I_{\out})$ whenever \eqref{EQ45} holds.

\medskip
\noindent\textbf{Step 1: the similarity transform.}
For $s\in\R$, define the backward time distance
\[
\rho(s)=s_1e^{-s}
\andand
t(s)=T_1-\rho(s).
\]
Thus $\text{d}t/\text{d}s=\rho(s)$.  For each physical-space center $z\in\R^n$, set
\[
x_z(s,y)=z+\sqrt{\rho(s)}\,y
\]
and define
\begin{equation}
g_z(s,y)
=e^{\phi(s)}e^{-|y|^2/8}
 F\left(t(s),x_z(s,y)\right).
\label{EQ70}
\end{equation}
The advantage of using a center $z$ in the original $x$-variables is that no separate rescaled spatial variable is needed.
Set
\begin{align}
\Phi(s)=\phi(s)+\frac n4s.
\label{EQ38}    
\end{align}
We claim that
\begin{align}
\left(\partial_s+\mathcal{H}-\Phi'(s)\right)g_z
=\mathcal V_{\crit,z}g_z+\mathcal V_{\bdd,z}g_z+\mathcal G_z
\label{EQ34}    
\end{align}
in the sense of distributions,
where
\begin{align*}
\mathcal V_{\alpha,z}(s,y)
&=\rho(s)V_\alpha\left(t(s),x_z(s,y)\right)
\comma
\alpha\in\{\crit,\bdd\},
\end{align*}
and
\begin{align}
\mathcal G_z(s,y)
&=\rho(s)e^{\phi(s)}e^{-|y|^2/8}
 G\left(t(s),x_z(s,y)\right).
\label{EQ35}
\end{align}
For clarity, we show the whole argument.  If
\[
\tilde F_z(s,y)=F\left(t(s),x_z(s,y)\right),
\]
then
\[
\partial_s\tilde F_z
=\rho\,\partial_tF-\frac12y\cdot\nabla_y\tilde F_z
\andand
\Delta_y\tilde F_z=\rho\,\Delta_xF.
\]
Consequently,
\begin{align}
\rho(\partial_t-\Delta_x)F\left(t(s),x_z(s,y)\right)
=\left(\partial_s+\frac12y\cdot\nabla_y-\Delta_y\right)\tilde F_z.
\label{EQ37}    
\end{align}
The elementary conjugation identity
\[
e^{-|y|^2/8}
\left(\frac12y\cdot\nabla_y-\Delta_y\right)
 e^{|y|^2/8}
=\mathcal{H}-\frac n4
\]
turns \eqref{EQ37} into
\[
\left(\partial_s+\mathcal{H}-\frac n4\right)
\left(e^{-|y|^2/8}\tilde F_z\right)
=\rho e^{-|y|^2/8}(\partial_t-\Delta_x)F.
\]
Multiplication by $e^{\phi(s)}$ then gives~\eqref{EQ34}.
The energy assumptions ensure that $g_z$ belongs to the graph class needed below. Indeed,
\begin{align}
\|g_z(s)\|_{H^1_y}
\leq C e^{\phi(s)}\rho(s)^{-\frac{n}{4}}
\left(
 \|F(t(s))\|_{L^2_x}
 +\sqrt{\rho(s)}\,\|\nabla F(t(s))\|_{L^2_x}
\right),
\label{EQ36}    
\end{align}
with a constant independent of~$z$.  On the support of $F$, one has
\begin{align}
\label{EQ50}
0\leq s\leq\log4
\andand
\frac{s_1}{4}\leq\rho(s)\leq s_1,
\end{align}
so \eqref{EQ36} implies
\[
g_z\in L^2_sH^1_y
\andand
\supp_sg_z\subset[0,\log4]\Subset I_{\rin}.
\]
In particular, Sobolev embedding already gives $g_z\in L^2_sL^\frac{2n}{n-2}_y$ independently of the mixed estimate. Moreover, \eqref{EQ35} and $G\in L^1_tL^2_x$ imply
\[
\mathcal G_z\in L^1_sL^2_y.
\]
Thus every term in \eqref{EQ34} belongs to the appropriate source space.

\medskip
\noindent\textbf{Step 2: apply the mixed estimate and absorb the potential.}
By \eqref{EQ38}, we have
\begin{align}
\Phi(s)
=\frac A2s^2+\left(a+2A+\frac n4\right)s+2A.
\label{EQ39}    
\end{align}
The additive constant $2A$ is irrelevant to the operator, and \eqref{EQ39} has exactly the form required in Proposition~\ref{P01}.  Since $s+2\geq1$ on $I_{\out}$, choosing $a_n$ sufficiently large guarantees the slope condition for~$\Phi$.
For fixed $s$ and $z$, the change of variables
\[
x=z+\sqrt{\rho(s)}\,y
\]
gives the critical scaling identity
\begin{align}
\|\mathcal V_{\crit,z}(s)\|_{L^{n/2}_y}
=\|V_{\crit}(t(s))\|_{L^{n/2}_x}
\leq\epsilon.
\label{EQ32}    
\end{align}
The equation~\eqref{EQ50} then gives
\begin{align}
\|\mathcal V_{\bdd,z}\|_{L^\infty(\operatorname{supp}_sg_z\times\R^n)}
\leq s_1R.
\label{EQ33}
\end{align}
Then apply Proposition~\ref{P01} to \eqref{EQ34} using
\[
\mathcal G_z\in L^1_sL^2_y,
\qquad
\mathcal V_{\crit,z}g_z\in L^2_sL^{\frac{2n}{n+2}}_y,
\andandone
\mathcal V_{\bdd,z}g_z\in L^2_{s,y}.
\]
Hölder's inequality, \eqref{EQ32}, and \eqref{EQ33} yield
\begin{align*}
\|g_z\|_{X_A}
\leq C\Bigl(&
 \|\mathcal G_z\|_{L^1_sL^2_y}
 +\epsilon\|g_z\|_{L^2_sL^\frac{2n}{n-2}_y}
 +A^{-\frac{1}{4}}s_1R\|g_z\|_{L^2_{s,y}}
\Bigr).
\end{align*}
Choosing $\epsilon$ sufficiently small and $A_n$ sufficiently large, the last two terms are then absorbed into the left-hand side, uniformly in the center~$z$.  We obtain
\begin{align}
\|g_z\|_{L^\infty_sL^2_y}
\leq C\|\mathcal G_z\|_{L^1_sL^2_y}.
\label{EQ40}
\end{align}
For fixed $z$, the map $s\mapsto g_z(s)$ is continuous into $L^2_y$,
which follows from $F\in C_tL^2_x$ and by the continuity on $L^2$ of
translations, positive dilations, and multiplication by the fixed Gaussian. Hence, the essential $L^\infty_sL^2_y$ estimate in \eqref{EQ40} holds at every prescribed time $s_*=s(t_*)$.  If $F(t_*)=0$, the desired conclusion is immediate; otherwise, $s_*\in[0,\log4]$.

\medskip
\noindent\textbf{Step 3: average over the physical-space centers.}
Averaging over the centers is based on the following exact identity: For every $Q\in L^2(\R^n)$ and $\rho>0$,
\begin{align}
\int_{\R^n_z}
 \|e^{-|y|^2/8}Q(z+\sqrt\rho\,y)\|_{L^2_y}^2\,\text{d}z
=(4\pi)^{n/2}\|Q\|_{L^2_x}^2.
\label{EQ41}
\end{align}
Indeed, Tonelli's theorem and the substitution $x=z+\sqrt\rho\,y$ give
\begin{align*}
&\int_{\R^n_z}\int_{\R^n_y} e^{-|y|^2/4}|Q(z+\sqrt\rho\,y)|^2\,\text{d}y\,\text{d}z
\\&\indeq
=\int_{\R^n_y} e^{-|y|^2/4}
   \left(\int_{\R^n_z}|Q(z+\sqrt\rho\,y)|^2\,\text{d}z\right)\text{d}y
=\left(\int_{\R^n_y} e^{-|y|^2/4}\,\text{d}y\right)\|Q\|_2^2.
\end{align*}
Fix $s_*=s(t_*)$. Taking the $L^2$-norm in $z$ in \eqref{EQ40} and applying Minkowski's integral inequality, we obtain
\begin{align}
\left(\int_{\R^n_z}\|g_z(s_*)\|_{L^2_y}^2\,\text{d}z\right)^{\frac{1}{2}}
\leq C\int_\R
\left(\int_{\R^n_z}\|\mathcal G_z(s)\|_{L^2_y}^2\,\text{d}z\right)^{\frac{1}{2}}\text{d}s.
\label{EQ42}
\end{align}
Applying \eqref{EQ41} first to $Q=F(t(s))$ and then to $Q=G(t(s))$ gives
\begin{align}
\left(\int_{\R^n_z}\|g_z(s)\|_{L^2_y}^2\,\text{d}z\right)^{\frac{1}{2}}
=(4\pi)^{\frac{n}{4}}e^{\phi(s)}\|F(t(s))\|_{L^2_x},
\label{EQ43}
\end{align}
and
\begin{align}
\left(\int_{\R^n_z}\|\mathcal G_z(s)\|_{L^2_y}^2\,\text{d}z\right)^{\frac{1}{2}}
=(4\pi)^{\frac{n}{4}}\rho(s)e^{\phi(s)}\|G(t(s))\|_{L^2_x}.
\label{EQ44}
\end{align}
Substituting \eqref{EQ43}--\eqref{EQ44} into \eqref{EQ42} and canceling $(4\pi)^{\frac{n}{4}}$ yields
\[
e^{\phi(s_*)}\|F(t_*)\|_2
\leq C\int_\R \rho(s)e^{\phi(s)}\|G(t(s))\|_2\,\text{d}s.
\]
Finally, $\text{d}t=\rho(s)\,\text{d}s$, and the extended source is supported in~$J$.  Therefore, the last integral equals
\[
\int_J e^{\phi(s(t))}\|G(t)\|_2\,\text{d}t,
\]
which proves~\eqref{EQ31}.
\end{proof}

\section{Proof of the lower bounds}
\label{sec04}
\subsection{Truncation of the potential and the energy bound}
\label{sec0401}
We first decompose the potential into a small critical part and a bounded part, and then record the corresponding energy estimate.
Let $S$ be the Sobolev constant in
\[
\|f\|_{L^\frac{2n}{n-2}_x}^2\leq S\|\nabla f\|_{L^2_x}^2.
\]
Decreasing $\epsilon$ if necessary, we assume that
\[
\epsilon S\leq \frac{1}{2}.
\]
Let $M=\|v\|_{L_t^\infty L_x^p}$, and let $R$ be defined
by~\eqref{EQ17}.
Suppose first that $p<\infty$. If $M=0$, set
$v_c=v_b=0$. If $M>0$, set
\[
v_c=v\mathbf{1}_{\{|v|>R\}}
\andand
v_b=v\mathbf{1}_{\{|v|\leq R\}}.
\]
Then, for almost every $t$,
\[
\|v_c(t)\|_{L_x^\frac{n}{2}}^{\frac{n}{2}}
 =\int_{\{|v(t)|>R\}}|v(t,x)|^{\frac{n}{2}}\,\text{d}x
 \leq R^{\frac{n}{2}-p}\|v(t)\|_{L_x^p}^p.
\]
Consequently,
\[
\|v_c\|_{L_t^\infty L_x^{\frac{n}{2}}}
\leq R^{1-\frac{2p}{n}}M^{\frac{2p}{n}}
=\epsilon.
\]
The same conclusion is immediate when $M=0$. Thus, in either case,
\begin{equation}
\|v_c\|_{L_t^\infty L_x^{n/2}}\leq\epsilon
\andand
\|v_b\|_{L_{t,x}^\infty}\leq R.
\label{EQ71}
\end{equation}
When $p=\infty$, we instead take $v_c=0$ and $v_b=v$;
then \eqref{EQ71} holds with $R=M$.
We shall also use the following energy bound. Since
\[
\Delta u\in L_t^2H_x^{-1},
\qquad
v_cu\in L_t^2L_x^{2n/(n+2)}
      \hookrightarrow L_t^2H_x^{-1},
\andandone
v_bu\in L_{t,x}^2
      \hookrightarrow L_t^2H_x^{-1},
\]
the equation \eqref{EQ01} implies $u_t\in L_t^2H_x^{-1}$. The Hilbert space
chain rule therefore shows that
$t\mapsto\|u(t)\|_2^2$ is absolutely continuous. Taking real
parts in the $H^{-1},H^1$ duality pairing gives
\[
\begin{aligned}
\frac{1}{2}\frac{\text{d}}{\text{d}t}\|u(t)\|_{L_x^2}^2+\|\nabla u(t)\|_{L_x^2}^2
&\leq
\|v_c(t)\|_{L^\frac{n}{2}_x}\|u(t)\|_{L^{\frac{2n}{n-2}}_x}^2
 +R\|u(t)\|_{L_x^2}^2
\\
&\leq
\frac{1}{2}\|\nabla u(t)\|_{L_x^2}^2+R\|u(t)\|_{L_x^2}^2
\end{aligned}
\]
for almost every~$t$.
Gronwall's inequality then yields
\begin{equation}
\|u(t)\|_{L_x^2}
\leq e^{R(t-s)}\|u(s)\|_{L^2_x},
\qquad
T_0\leq s\leq t\leq T_*.
\label{EQ72}
\end{equation}
In particular, when $R=0$, the $L^2$-norm is nonincreasing.

\subsection{Proof of the main estimates}
\label{sec0402}

\begin{proof}[Proof of Theorem~\ref{T01}]
Let $v_c$ and $v_b$ be as in 
Section~\ref{sec0401}. For $t\geq t_3$, extend
$u$ by the free heat flow and $v$, $v_c$, and $v_b$ by zero.
Thus,
\begin{align}
u(t)=e^{(t-t_3)\Delta}u(t_3)
\comma
t\geq t_3.
\label{EQ52}
\end{align}
Note that the two one-sided traces agree at $t_3$, so the extension introduces no
distributional term supported at $t=t_3$.
For every $t>t_3$,
Plancherel's theorem gives
\[
\int_{t_3}^{t}\|\nabla u(\tau)\|_{L_x^2}^2\,\text{d}\tau
=
\frac{1}{2}\int_{\R^n}
(1-e^{-2(t-t_3)|\xi|^2})
|\widehat{u(t_3)}(\xi)|^2\,\text{d}\xi
\leq \frac{1}{2}\|u(t_3)\|_{L_x^2}^2.
\]
Thus the continued function remains in the local energy class and satisfies
\begin{align}
\|u(t)\|_{L_x^2}\leq \|u(t_3)\|_{L_x^2}
\comma
t\geq t_3.
\label{EQ53}
\end{align}
Set
\[
\delta=\frac{t_2-t_1}{8},
\]
and choose $\chi\in C_c^\infty(\R)$ with $0\leq\chi\leq1$ such that
\[
\chi=1
\quad\text{on }[t_1+\delta,2t_3-t_1]
\andand
\supp\chi\subset[t_1,3t_3-2t_1],
\]
and such that $\chi$ is nondecreasing on $[t_1,t_1+\delta]$ and
nonincreasing on $[2t_3-t_1,3t_3-2t_1]$. Consequently,
\begin{align}
\int_{t_1}^{t_1+\delta}|\chi'(t)|\,\text{d}t
=
\int_{2t_3-t_1}^{3t_3-2t_1}|\chi'(t)|\,\text{d}t
=1.
\label{EQ54}
\end{align}
Define $F(t)=\chi(t)u(t)$ for $t\geq t_1$ and $F(t)=0$ for $t<t_1$.
Extend $v$, $v_c$, and $v_b$ by zero both for $t<t_1$ and
for $t>t_3$.
Since $\chi(t_1)=0$, the zero extension creates no distribution supported
at $t=t_1$, and
\begin{align}
(\partial_t-\Delta)F=vF+\chi'u
\label{EQ55}
\end{align}
holds in the sense of distributions, where $\chi'u$ is understood to be zero for
$t<t_1$.
We now apply Proposition~\ref{P02} with future time $T_1=4t_3-3t_1$ and scale
$s_1=4(t_3-t_1)$. The corresponding similarity variable is
\[
s(t)=\log\frac{4(t_3-t_1)}{4t_3-3t_1-t}.
\]
Choose a bounded open interval $J$ such that
\[
[t_1,3t_3-2t_1]\Subset J
\andand
\sup J<4t_3-3t_1.
\]
On the support of $F$, one has $0\leq s(t)\leq\log4$. Set
\begin{align}
A=A_n(1+16R^2(t_3-t_1)^2)
\andand
\phi(s)=as+\frac{A}{2}(s+2)^2,
\label{EQ56}
\end{align}
where $a\geq a_n$ shall be chosen below. We may assume $a_n\geq0$.
Since the scale in Proposition~\ref{P02} is $4(t_3-t_1)$, the choice
of $A$ in \eqref{EQ56} satisfies the required lower bound on~$A$.
Observe that we have the following values of the similarity variable:
\begin{align}
s(t_1+\delta)
&=
-\log\left(1-\frac{t_2-t_1}{32(t_3-t_1)}\right),\label{EQ57a}
\\
s(t_2)
&=
-\log\left(1-\frac{t_2-t_1}{4(t_3-t_1)}\right),\label{EQ57b}
\\
s(3t_3-2t_1)
&=\log4.
\label{EQ57}
\end{align}
On $\operatorname{supp}\chi$, both $s(t)$ and $\phi(s(t))$ are increasing in $t$, since
\[
s'(t)=\frac{1}{4t_3-3t_1-t}>0
\andand
\phi'(s)=a+A(s+2)>0
\onon{\operatorname{supp}_t\chi}.
\]
On the first transition interval $t\in [t_1,t_1+\delta]$, the
inequality \eqref{EQ72} gives
\[
\|u(t)\|_{L_x^2}
\leq e^{R(t-t_1)}\|u(t_1)\|_{L_x^2}
\leq e^{R\delta}\|u(t_1)\|_{L_x^2},
\]
while on the second transition interval $t\in [2t_3-t_1,3t_3-2t_1]$, \eqref{EQ53} implies
\[
\|u(t)\|_{L_x^2}\leq\|u(t_3)\|_{L_x^2}.
\]
It follows from \eqref{EQ54} that
\begin{align}
\begin{split}
&\int_J e^{\phi(s(t))}|\chi'(t)|\|u(t)\|_{L_x^2}\,\text{d}t
\\&\indeq
\leq \int_{t_1}^{t_1+\delta}e^{\phi(s(t_1+\delta))}|\chi'(t)|e^{R\delta}\|u(t_1)\|_{L_x^2}
+\int_{2t_3-t_1}^{3t_3-2t_1}e^{\phi(s(3t_3-2t_1))}|\chi'(t)|\|u(t_3)\|_{L^2_x}
\\&\indeq
\leq
e^{\phi(s(t_1+\delta))+R\delta}\|u(t_1)\|_{L_x^2}
+
e^{\phi(\log4)}\|u(t_3)\|_{L_x^2}.
\label{EQ58}
\end{split}
\end{align}
Since $t_2-t_1=8\delta$ and $t_2<t_3$, we have $\chi(t_2)=1$.
Applying Proposition~\ref{P02} to \eqref{EQ55} at $t_*=t_2$ and using
\eqref{EQ58}, we obtain
\begin{align}
e^{\phi(s(t_2))}\|u(t_2)\|_{L_x^2}
\leq
C_*\left(
e^{\phi(s(t_1+\delta))+R\delta}\|u(t_1)\|_{L_x^2}
+
e^{\phi(\log4)}\|u(t_3)\|_{L_x^2}
\right),
\label{EQ59}
\end{align}
where $C_*\geq1$ depends only on~$n$.
We next choose 
\begin{align}
a
=
a_n+
\frac{1}{s(t_2)-s(t_1+\delta)}
\log_+\left(
\frac{2C_*e^{R\delta}\|u(t_1)\|_{L_x^2}}
{\|u(t_2)\|_{L_x^2}}
\right).
\label{EQ61}
\end{align}
Since
\begin{align*}
&
\phi(s(t_2))-\phi(s(t_1+\delta))
\\&\indeq
=
a\left(s(t_2)-s(t_1+\delta)\right)
+\frac{A}{2}\left(s(t_2)-s(t_1+\delta)\right)\left(s(t_2)+s(t_1+\delta)+4\right)
\geq
a\left(s(t_2)-s(t_1+\delta)\right)
\end{align*}
and $xe^{-\log_+x}\leq1$ for every $x>0$, the choice \eqref{EQ61}
implies
\begin{align*}
\begin{split}
&C_*e^{\phi(s(t_1+\delta))-\phi(s(t_2))+R\delta}
\|u(t_1)\|_{L_x^2}
\\&\indeq
\leq C_* e^{R\delta}\|u(t_1)\|_{L^2_x}
\exp\biggl(
\underbrace{-a_n(s(t_2)-s(t_1+\delta))}_{\leq 0}
-\log_+\biggl(
\frac{2C_*e^{R\delta}\|u(t_1)\|_{L_x^2}}
{\|u(t_2)\|_{L_x^2}}
\biggr)\biggr)
\\&\indeq
\leq
\frac{1}{2}\|u(t_2)\|_{L_x^2}.
\end{split}
\end{align*}
Dividing \eqref{EQ59} by $e^{\phi(s(t_2))}$ thus gives
\begin{align}
\|u(t_3)\|_{L_x^2}
\geq
\frac{1}{2C_*}\|u(t_2)\|_{L_x^2}
\exp\left(
-\phi(\log4)+\phi(s(t_2))
\right).
\label{EQ62}
\end{align}
Since $0\leq s(t_2)\leq\log(4/3)$, we have
\begin{align}
\begin{split}
&
\phi(\log4)-\phi(s(t_2))
=
\left(\log4-s(t_2)\right)
\left(
a+\frac{A}{2}\left(\log4+s(t_2)+4\right)
\right)
\\&\indeq
\leq 4\log\left(
4-\frac{t_2-t_1}{t_3-t_1}
\right)
(a+A)
\leq 4\log 4\ (a+A)
.
\label{EQ63}
\end{split}
\end{align}
Since
\[
(-\log(1-r))'=\frac{1}{1-r}\in\left[1,\frac{4}{3}\right]
\comma
0\leq r\leq\frac{1}{4},
\]
the mean value theorem, \eqref{EQ57a}, and \eqref{EQ57b} give
\begin{align}
\frac{7(t_2-t_1)}{32(t_3-t_1)}
&\leq
s(t_2)-s(t_1+\delta)
\leq
\frac{7(t_2-t_1)}{24(t_3-t_1)}
.
\end{align}
Moreover, \eqref{EQ61} implies
\begin{align}
\begin{split}
a
&\leq
a_n+
\frac{32(t_3-t_1)}{7(t_2-t_1)}
\left(
\log(2C_*)+R\delta
+\log_+\frac{\|u(t_1)\|_{L_x^2}}{\|u(t_2)\|_{L_x^2}}
\right)
\\
&\leq
C\left[
1+
\frac{t_3-t_1}{t_2-t_1}
\left(
1+\log_+\frac{\|u(t_1)\|_{L_x^2}}{\|u(t_2)\|_{L_x^2}}
\right)
+R(t_3-t_1)
\right],
\label{EQ64}
\end{split}
\end{align}
where we used
\[
\frac{t_3-t_1}{t_2-t_1}R\delta
=
\frac{1}{8}R(t_3-t_1).
\]
Finally,
\[
A\leq C\left(1+R^2(t_3-t_1)^2\right)
\andand
R(t_3-t_1)\leq1+R^2(t_3-t_1)^2.
\]
Combining these bounds with \eqref{EQ63} gives
\begin{align}
\phi(\log4)-\phi(s(t_2))
\leq
C\left(
1+R^2(t_3-t_1)^2
+
\frac{t_3-t_1}{t_2-t_1}
\left(
1+\log_+\frac{\|u(t_1)\|_{L_x^2}}{\|u(t_2)\|_{L_x^2}}
\right)
\right).
\label{EQ65}
\end{align}
Combining \eqref{EQ62} and \eqref{EQ65}, and enlarging $C$ if necessary
to absorb the fixed factor $(2C_*)^{-1}$, we obtain the lower bound in
Theorem~\ref{T01}. The right-hand side of that bound is strictly positive,
and hence~$\|u(t_3)\|_{L_x^2}>0$.
\end{proof}

\begin{proof}[Proof of Theorem~\ref{T02}]
Take $v_c=v$, $v_b=0$, and $R=0$.
The preceding proof applies with
$A=A_n$, and \eqref{EQ64} becomes
\[
a
\leq
C\frac{t_3-t_1}{t_2-t_1}
\left(
1+\log\frac{\|u(t_1)\|_{L_x^2}}{\|u(t_2)\|_{L_x^2}}
\right).
\]
Note that the energy estimate in
Section~\ref{sec0401} shows that
$
\|u(t_2)\|_{L_x^2}\leq\|u(t_1)\|_{L_x^2}
$, which shows that
the logarithm is well-defined.
Together with \eqref{EQ62}--\eqref{EQ63}, this proves~\eqref{EQ03}.
\end{proof}

\section{Sharpness in the bounded-potential case}
\label{sec05}
We now consider the case $p=\infty$ and show that the quadratic dependence
on time in the exponent of Theorem~\ref{T01} is optimal for bounded
complex-valued potentials.

\cole
\begin{Theorem}[A global solution with Gaussian decay]
\label{T03}
There exist a bounded complex-valued potential
\[
  v\in L^\infty((0,\infty)\times\R^3)
\]
such that
$\|v\|_{L^\infty((0,\infty)\times\R^3)}\neq 0$,
constants $B,c>0$, and a nonzero global energy solution
\[
  u\in C([0,\infty);L^2(\R^3))
  \cap L^2_{\text{loc}}(0,\infty;H^1(\R^3))
\]
of
\[
  \partial_tu-\Delta u=vu
  \qquad\text{on }(0,\infty)\times\R^3
\]
such that
\begin{align}\label{EQ204}
  0<\|u(t)\|_{L^2_x}\leq Be^{-ct^2}
  \qquad (t\geq0).
\end{align}
\end{Theorem}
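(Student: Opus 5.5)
The construction proceeds in two steps. First we recall Meshkov's periodic parabolic example on $\T^3$. Then we localize it to $\R^3$ by multiplying by a slowly expanding positive envelope $g$ that keeps the quotient potential bounded.

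\textbf{Step 1: the periodic profile.} Following Section~3 of~\cite{Meshkov}, we build a solution $W$ on $(0,\infty)\times\T^3$ (equivalently, $2\pi$-periodic in $x\in\R^3$) of $(\partial_t-\Delta)W=qW$ with $q\in L^\infty$ complex-valued. On time blocks $[k,k+1]$ the solution transfers its mass from Fourier modes of frequency $\sim k$ to frequency $\sim k+1$. Within a block, $W$ is a combination
\[
\alpha_k(t)e^{ik x_1}+\beta_k(t)e^{i(k+1)x_2}
\]
(with the directions rotating between blocks), and the heat dissipation $e^{-k^2}$ accumulated per block gives $|W|\le Ce^{-ct^2}$ uniformly in $x$. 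The bounded complex potential $q=(\partial_t-\Delta)W/W$ is arranged to be bounded wherever $W$ vanishes. This works because on each block the zero set of $W$ is controlled by the explicit two-mode structure: near zeros, the numerator vanishes to the same order as $W$. I will take this as given from Meshkov, recording explicitly the structural facts needed later. Namely, on each block $W$ depends on $x$ only through one or two coordinate directions (the \emph{cancellation directions}), and $|\nabla W|\le C(1+t)|W|$ fails only along those directions.

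\textbf{Step 2: the envelope.} We set $u=gW$, so that
\[
v=q+\frac{\partial_tg-\Delta g}{g}-\frac{2\nabla g\cdot\nabla W}{gW}.
\]
The goal is a positive $g(t,x)$ with $\sup_t\|g(t)\|_{L^2(\R^3)}<\infty$, $|\partial_tg|+|\Delta g|\le Cg$, and $|\nabla g\cdot\nabla W|\le Cg|W|$. The plan is to take a product $g=\prod_j g_j(t,x_j)$ with $g_j(t,x_j)=\langle x_j/\lambda(t)\rangle^{-1}$, where $\lambda(t)\ge1$ grows slowly; a power profile gives $|\partial_{x_j}g_j|\le C g_j/\lambda$ and $|\partial_{x_j}^2g_j|\le Cg_j/\lambda^2$. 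The time derivative gives $|\partial_tg|\le C(\lambda'/\lambda)g$, which is bounded if $\lambda'\lesssim\lambda$. The dangerous term is $\partial_{x_j}g\,\partial_{x_j}W$ in a cancellation direction $j$, where $|\partial_jW|/|W|$ may be unbounded near zeros. Here I would exploit the block structure. Within a block the cancellation direction is a fixed coordinate, and I would make $g$ locally constant in that coordinate near the zero set, for instance by replacing $g_j$ by a periodized profile whose derivative vanishes on the hyperplanes where $W$ can vanish, to the required order. Alternatively one can choose $g_j$ to be piecewise constant on period cells, glued smoothly in the bulk where $|W|$ is bounded below relative to $|\nabla W|$. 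One then checks $|\nabla g\cdot\nabla W|\le Cg|W|$, and that the switching of directions between blocks costs only bounded $\partial_t g/g$. The $L^2$ bound follows from $\|g(t)\|_{L^2}\lesssim\lambda(t)^{3/2}$ combined with $\|W(t)\|_{L^\infty}\le Ce^{-ct^2}$, which gives $\|u(t)\|_{L^2}\le C\lambda^{3/2}e^{-ct^2}\le Be^{-c't^2}$ for any polynomial or exponential $\lambda$. The resulting $v$ is bounded and nonzero.

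\textbf{Step 3: energy class and positivity.} The estimates above, applied to $\nabla u=W\nabla g+g\nabla W$, give $u\in C([0,\infty);L^2)\cap L^2_{\rm loc}H^1$, and the equation holds distributionally because $g,W$ are smooth. Positivity $\|u(t)\|_{L^2}>0$ holds since $W(t)\not\equiv0$ (it is a nontrivial trigonometric polynomial) and $g>0$. Alternatively, it follows from Corollary~\ref{C01}, which also shows the decay cannot be faster. The main obstacle is Step~2: designing $g$ so that its gradient vanishes in the cancellation directions at the rate matching the vanishing of $W$, uniformly across all blocks and through the block transitions, while $\partial_tg/g$ stays bounded. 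I would first try to read off from Meshkov's formulas an explicit set of hyperplanes containing all zeros of $W$ on each block, and make $g$ flat in the normal direction near them.
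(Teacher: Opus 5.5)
There is a genuine gap, and it is the step you yourself flag as the main obstacle: the envelope $g$ is never constructed, and your product ansatz rests on structural facts that Meshkov's profile does not have.

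\textbf{What $g$ must satisfy.} In the actual construction the modes are $e^{in_b\cdot x}$ with $|n_b|^2=8b+1$. $|W|$ can be small only near the crossing times $8j+1$ and $8j+7$, and there only near the hyperplanes $\{(m+n)\cdot x\in2\pi\mathbb Z\}$, resp.\ $\{(m+k)\cdot x\in2\pi\mathbb Z\}$. Their normals are generally oblique lattice vectors that vary from crossing to crossing; they are not coordinate axes. On these regions $\nabla W=inW-i(1-\alpha)(m+n)u_2$ with $|u_2|\ge cM$, while at the crossing time $|W|$ is only of size $d_{2\pi}((m+n)\cdot x)\,M$ (Lemma~\ref{L04}). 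Hence $|\nabla g\cdot\nabla W|\le Cg|W|$ essentially forces
\[
|(m+n)\cdot\nabla g|\le Cg\,d_{2\pi}((m+n)\cdot x).
\]
So the normal derivative of $g$ must vanish on an infinite family of parallel hyperplanes with spacing $\sim(1+t)^{-1/2}$, at every $x\in\R^3$, while $g$ keeps a bounded $L^2$ norm.

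\textbf{Why your ansatz fails.} A product of one-variable profiles in fixed coordinates cannot do this for an oblique normal. The directional derivative of $\log g$ would have to vanish on entire oblique planes, which forces each $\log g_j$ to be affine. Your fallbacks (``a periodized profile'', ``piecewise constant on period cells'') restate the requirement rather than meet it. You also do not check that the envelopes adapted to consecutive normals are uniformly comparable, which is what keeps $|\partial_tg|\le Cg$ through each switch. Nor do you check that the flattening preserves $|\Delta g|\le Cg$.

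\textbf{The missing mechanism.} The paper uses a flattening change of variables,
\[
g=(R_0+t)^{-3/2}G\bigl((x-a(t,x))/(R_0+t)\bigr),\qquad G(y)=(1+|y|^2)^{-2},
\]
where $a=A_\nu$, $A_\nu(x)=\nu|\nu|^{-2}\sin(\nu\cdot x)$, on a plateau around each crossing with normal $\nu$. The identity $D_x(x-A_\nu)\nu=(1-\cos(\nu\cdot x))\nu$ gives exactly the required vanishing of $\nu\cdot\nabla g$. Interpolating $a$ in time between consecutive $A_\nu$ gives
\[
|a|+|\partial_ta|\le C(1+t)^{-1/2},\qquad |D_xa|\le C,\qquad |D_x^2a|\le C(1+t)^{1/2}.
\]
These yield $\|g(t)\|_{L^2}\sim1$, $|\partial_tg|\le Cg$ and $|\Delta g|\le Cg$; the $\Delta a$ term is compensated by the factor $(R_0+t)^{-1}$ from the chain rule.

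\textbf{Step~1 as written.} Your picture is not Meshkov's mechanism and is internally inconsistent. Modes of frequency $k$ on $[k,k+1]$ dissipate like $\exp(-\sum_{k\le t}k^2)\approx e^{-t^3/3}$, not $e^{-ct^2}$. No bounded potential can produce this, since the Dirichlet quotient grows at most linearly (cf.\ Theorem~\ref{T01}). In the real example $|\xi|^2\sim t$ and $|\nabla W|\le C\sqrt{1+t}\,M$. Moreover, a higher free mode cannot overtake a lower one. That is why each block passes through a slower-decaying, phase-corrected mode and a single-mode change of decay rate.

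\textbf{Minor points.} $W$ is not a trigonometric polynomial. Positivity of $\|u(t)\|_{L^2}$ should instead be drawn from $|W|\ge cM$ off the crossing slabs, or from Theorem~\ref{T01}. Also, $v\not\equiv0$ needs the short argument that otherwise $u$ would be a free heat flow, which has only exponential decay.
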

\colb

The construction
for the periodic case,
$\mathbb{T}^{3}$,
follows Meshkov's paper~\cite[Section~3]{Meshkov}.
We include the full argument for completeness, since the subsequent
construction
on $\mathbb{R}^{3}$
uses the precise structure of the functions involved.

\subsection{The switching block}\label{sec0501}

The construction in this section is based on the mechanism introduced by Meshkov in~\cite{Meshkov}. We provide all necessary details, making the presentation self-contained.

\subsubsection{Arithmetic and fixed cutoffs}

For $b\in\N$, set
\begin{align}\label{EQ211}
  \lambda_b=8b+1.
\end{align}
The three-square theorem (see~\cite{HardyWright}, for example) implies that there exists
$n_b\in\mathbb{Z}^3$ with nonnegative components such that
\begin{align}\label{EQ212}
  |n_b|^2=8b+1=\lambda_b.
\end{align}
Indeed, $8b+1$ is not of the form~$4^a(8d+7)$.

We shall repeatedly choose smooth cutoffs on intervals of fixed length.  Every cutoff below is obtained from a fixed smooth cutoff by translation,
and hence all derivatives used below are bounded by an absolute constant.
We shall also use a fixed function
$\phi\in C^\infty(\R)$ such that
\begin{align}\label{EQ213}
  \phi(\theta+2\pi)=\phi(\theta)\comma
  \phi(\theta)=\theta\quad\text{for }|\theta|\leq\frac{\pi}{2},
  \andand
  \|\phi\|_{C^2(\R)}\leq C.
\end{align}
Such a function can be obtained by smoothly joining the two endpoint values on
$[\pi/2,3\pi/2]$ and then extending periodically.

\subsubsection{Construction of one switching block}

The block below switches the frequency from $n_b$ to~$n_{b+1}$.
The construction proceeds in four time steps.  
The first and fourth steps involve a superposition of two modes;
in each case, we separately treat the two single-mode regions,
the two cutoff-transition regions, and the central two-mode region.
The second step removes the spatial phase from a single
mode, and the third changes its temporal decay rate.  Thus every possible
zero occurs in a central two-mode piece, where both modes solve the free heat
equation.

\cole
\begin{Lemma}[Uniform switching block]
\label{L03}
There exist $b_0\in\N$ and constants $C,c_0>0$ with the following property. 
Let $b\geq b_0$, and set
\[
  n=n_b
  \comma
   k=n_{b+1},
  \andand m=n_{b+2},
\]
so that
\begin{align}\label{EQ215}
  |n|^2=\lambda_b
  \comma
   |k|^2=\lambda_b+8,
  \andand |m|^2=\lambda_b+16.
\end{align}
Then there are a smooth function $U_b$, a bounded measurable function $q_b$, and a number $a_b>0$ on $[\lambda_b,\lambda_b+8]\times\T^3$, such that
\begin{align}\label{EQ216}
  (\partial_t-\Delta) U_b=q_bU_b
  \andand
  \|q_b\|_{L^\infty([\lambda_b,\lambda_b+8]\times\T^3)}\leq C,
\end{align}
and
\begin{align}
  U_b(t,x)&=e^{-\lambda_b t+i n\cdot x}
    \comma 
  \lambda_b\leq t\leq\lambda_b+\frac14,
  \label{EQ217}\\
  U_b(t,x)&=a_be^{-(\lambda_b+8)t+i k\cdot x}
  \comma 
  \lambda_b+\frac{31}{4}\leq t\leq\lambda_b+8.
  \label{EQ218}
\end{align}
Moreover, there is a positive continuous piecewise-smooth function $M_b$ on $[\lambda_b,\lambda_b+8]$ such that
\begin{align}\label{EQ219}
  |U_b(t,x)|\leq2M_b(t)
  \andand
  |\nabla_xU_b(t,x)|\leq C\sqrt{\lambda_b}\,M_b(t),
\end{align}
and, away from the finitely many junction points of $M_b$,
\begin{align}\label{EQ220}
  \frac{\text{d}}{\text{d}t}\log M_b(t)\leq-c_0t.
\end{align}
At the two endpoints, $M_b$ satisfies
\[
M_b(\lambda_b)=e^{-\lambda_b^2}
\andand
M_b(\lambda_b+8)=a_b e^{-(\lambda_b+8)^2}
.
\]
\end{Lemma}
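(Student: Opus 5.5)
The plan is to build $U_b$ by concatenating four sub-intervals of length $2$ in $[\lambda_b,\lambda_b+8]$, following Meshkov's construction. On each piece $U_b$ is an explicit combination of exponentials $e^{-\mu t+i\ell\cdot x}$, modulated by the fixed cutoffs and the phase function $\phi$ of \eqref{EQ213}. The potential is then defined by
\[
q_b=\frac{(\partial_t-\Delta)U_b}{U_b}\quad\text{where }U_b\neq0,\qquad q_b=0\text{ otherwise}.
\]
Everything reduces to checking that this quotient is bounded by a constant independent of $b$. The guiding observation is that a pure mode $e^{-|\ell|^2t+i\ell\cdot x}$ solves the free heat equation exactly. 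So $q_b=0$ wherever $U_b$ is a single pure mode, and also wherever it is a superposition of pure modes. In particular, this holds in the central two-mode regions, which are the only places where zeros of $U_b$ can occur, so those zeros cause no problem.

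The four steps are as follows. Step 1 (times $[\lambda_b,\lambda_b+2]$) starts from $e^{-\lambda_bt+in\cdot x}$, as required by \eqref{EQ217}. It introduces a second free mode with a cutoff $\psi_1(t)$ that rises from $0$ to $1$, and then removes the original mode with a cutoff $\psi_2(t)$ that falls from $1$ to $0$. The key bound is in each cutoff-transition region. There the active cutoff multiplies a mode whose modulus is at most $\tfrac12$ of the other mode, which is arranged by choosing the relative amplitude and the times of the transitions. Hence $|U_b|\geq\tfrac12$ of the dominant amplitude, and
\[
|q_b|=\frac{|\psi_j'|\,|\text{mode}|}{|U_b|}\leq 2\|\psi_j'\|_\infty\leq C.
\]
Step 2 removes the spatial phase of a single mode by writing
\[
U=e^{-\lambda_bt}\,e^{i\theta(t)\phi(n\cdot x)},
\]
with $\theta$ moving between $1$ and $0$. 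Step 3 changes the decay rate from $\lambda_b$ to $\lambda_b+8$ on a mode with $|U|\neq0$ everywhere. For this I take $U=e^{-\lambda_bt-\beta(t)}\times(\cdots)$ with $\beta'$ varying smoothly between $0$ and $8$, which contributes $-\beta'(t)$ to $q_b$, a bounded term. Step 4 mirrors Step 1 with the frequency $k$ replacing $n$ and ends in the pure mode \eqref{EQ218}. The constant $a_b$ is whatever amplitude the preceding steps produce. The third frequency $m=n_{b+2}$ is only bookkeeping for gluing consecutive blocks.

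For the envelope, I define $M_b(t)$ on each piece as the largest modulus among the (at most two) exponential amplitudes present, with the cutoffs set to $1$, normalized so that $M_b(\lambda_b)=e^{-\lambda_b^2}$; note $e^{-\lambda_b t}=e^{-\lambda_b^2}$ at $t=\lambda_b$. Then $|U_b|\le 2M_b$ is immediate. For the gradient bound, every frequency involved has modulus at most $\sqrt{\lambda_b+16}$, and the phase function contributes $\theta\,\phi'(n\cdot x)\,n$, which gives $|\nabla U_b|\le C\sqrt{\lambda_b}M_b$. On each smooth piece $\frac{\text{d}}{\text{d}t}\log M_b$ equals $-\mu$ for some rate $\mu\in[\lambda_b,\lambda_b+8]$. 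Since $t\le\lambda_b+8\le 2\lambda_b$ for $b\ge b_0$, this gives $\frac{\text{d}}{\text{d}t}\log M_b\le-\lambda_b\le -c_0t$ with $c_0=\tfrac12$, which is \eqref{EQ220}.

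I expect Step 2 to be the main obstacle. There the direct computation gives
\[
\frac{(\partial_t-\Delta)U}{U}=-\lambda_b+i\theta'\phi+\theta^2|n|^2\phi'^2-i\theta|n|^2\phi''.
\]
Terms of size $\lambda_b=|n|^2$ must cancel, not merely be bounded. In the region $|n\cdot x|\le\pi/2$ one has $\phi'=1$ and $\phi''=0$, but $-\lambda_b+\theta^2\lambda_b$ is still large unless $\theta\in\{0,\pm1\}$. My first attempt is therefore to do the phase removal inside the two-mode framework rather than on a single mode. That is, decompose $e^{in\cdot x}$ into pieces handled on the sets where the phase $n\cdot x\pmod{2\pi}$ lies in $[-\pi/2,\pi/2]$ and where it lies in the complementary arc. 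I would use $\phi$ to keep the phase fixed in the "good" arc while the cutoffs act in time, so that every large term either vanishes identically or sits where the competing mode dominates by a factor $2$. Whatever the exact implementation, the test to verify is that each large contribution is either exactly zero or divided by a modulus comparable to itself.
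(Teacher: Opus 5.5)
There is a genuine gap. None of your building blocks can raise the lattice frequency from $|n|^2=\lambda_b$ to $|k|^2=\lambda_b+8$, and that is the whole content of the lemma.

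On a single nonvanishing piece, the winding of $x\mapsto U/|U|$ on $\T^3$ is a homotopy invariant, so the lattice frequency cannot change there. Your Step~3 only shifts the temporal rate of a fixed mode $e^{i\ell\cdot x}$, and does so at bounded cost only while the rate stays within $O(1)$ of $|\ell|^2$. A phase deformation $\theta(t)\phi(\cdot)$ with $\theta$ of size one costs order $\lambda_b$, as your own Step~2 computation shows. (Also, $e^{i\phi(n\cdot x)}$ is periodic with zero winding, so it is not $e^{in\cdot x}$.)

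Any frequency change must therefore occur at a two-mode crossing. Suppose both modes there are exact free modes $e^{-|\ell_1|^2t+i\ell_1\cdot x}$ and $c\,e^{-|\ell_2|^2t+i\ell_2\cdot x}$, as you require so that zeros are harmless. Then your domination condition in the cutoff regions forces $|c|e^{(|\ell_1|^2-|\ell_2|^2)t}$ to pass from below $1$ to above $1$, i.e.\ $|\ell_2|<|\ell_1|$. So free-to-free switches only lower the frequency, and your chain cannot end at $k$. Concretely, after your Step~3 the mode entering Step~4 decays at the same rate $\lambda_b+8$ as $k$, so there is no takeover. Your tentative repair of Step~2 does not address this.

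The missing idea is Meshkov's phase-corrected quasi-mode, and $m=n_{b+2}$ is essential rather than bookkeeping. In Step~1 the paper introduces
\[
u_2=-d\exp\bigl[-(\lambda_b-8)t-im\cdot x+i\alpha\phi((m+n)\cdot x)\bigr],
\]
where $\alpha\sim\lambda_b^{-1}$ is the smaller root of $24-2\alpha\,m\cdot(m+n)+\alpha^2|m+n|^2=0$. This term decays more slowly than $u_1$, so it takes over, yet its winding is $-m$ with $|m|^2=\lambda_b+16$. Where $d_{2\pi}((m+n)\cdot x)<\pi/2$, its local wave vector $-m+\alpha(m+n)$ has squared length exactly $\lambda_b-8$, so $u_2$ is a free solution there.

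This is the only region where $u_1+u_2$ can vanish. With $\theta$ representing $(m+n)\cdot x$, elsewhere the relative phase $\theta-\alpha\phi(\theta)$ stays at distance at least $\pi/4$ from $2\pi\mathbb{Z}$, giving $|u_1+u_2|\geq c(|u_1|+|u_2|)$. Meanwhile $|(\partial_t-\Delta)u_2|\leq C|u_2|$ everywhere, because $\alpha\,m\cdot(m+n)$ and $\alpha|m+n|^2$ are $O(1)$.

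The remaining steps then work as follows:
\begin{enumerate}
\item[Step~2:] The same smallness of $\alpha$ lets the correction be switched off at bounded cost; the large terms combine to $|m|^2-(\lambda_b-8)=24$.
\item[Step~3:] The rate of $e^{-im\cdot x}$ is raised to $|m|^2=\lambda_b+16$.
\item[Step~4:] A genuine downward free-to-free switch takes $-m$ to $k$.
\end{enumerate}
Your envelope argument goes through once the rates $\lambda_b-8$, $\lambda_b+16$ and the variable Step~3 rate are included. All logarithmic derivatives are then at most $-\lambda_b/2$, which gives $c_0=\frac14$.
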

\colb

Note that the last assertion
follows directly from \eqref{EQ217} and~\eqref{EQ218}.

In the proof, we denote
\begin{align}\llabel{EQ210}
  d_{2\pi}(s)=\operatorname{dist}(s,2\pi\mathbb{Z}).
\end{align}

\begin{proof}[Proof of Lemma~\ref{L03}]
All constants below are independent of~$b$.
Since the components of $m$
and $n$ are nonnegative, \eqref{EQ215} gives
\begin{align}\label{EQ221}
  \lambda_b+16\leq m\cdot(m+n)\leq C\lambda_b
  \andand
  |m+n|^2\leq C\lambda_b.
\end{align}
For sufficiently large $b$, the quadratic equation
\begin{align}\label{EQ222}
  24-2\alpha m\cdot(m+n)+\alpha^2|m+n|^2=0
\end{align}
has two positive roots, because
\[
  (m\cdot(m+n))^2-24|m+n|^2
  \geq(\lambda_b+16)^2-C\lambda_b>0.
\]
Denote by  $\alpha$ the smaller root.
Rationalizing the numerator gives
\begin{align}\label{EQ223}
  \alpha
  =\frac{24}{\underbrace{m\cdot(m+n)}_{c\lambda_b\leq\cdot\leq C\lambda_b}
  +\sqrt{\underbrace{(m\cdot(m+n))^2-24|m+n|^2}_{0<\cdot\leq C\lambda_b^2}}},
\end{align}
and thus
\begin{align}\label{EQ224}
  \frac{c}{\lambda_b}\leq\alpha\leq\frac{C}{\lambda_b}.
\end{align}
Increasing $b_0$ once and for all, we may also assume
\begin{align}\label{EQ225}
  0<\alpha\leq\frac12
  \andand
  |\alpha\phi(\theta)|\leq\frac\pi4
  \text{~for~}\theta\in\R
  .
\end{align}

\smallskip
\noindent\textbf{Step 1.}
$\lambda_b\leq t\leq\lambda_b+2$.
Define
\begin{align}
  u_1(t,x)&=e^{-\lambda_b t+i n\cdot x},\label{EQ226}\\
  u_2(t,x)&=-d\exp\!\left[-(\lambda_b-8)t-i m\cdot x
  +i\alpha\phi((m+n)\cdot x)\right],\label{EQ227}
\end{align}
where $d=e^{-8(\lambda_b+1)}$ is chosen so that
\begin{align}\label{EQ228}
  |u_1(\lambda_b+1,x)|=|u_2(\lambda_b+1,x)|.
\end{align}
Consequently,
\begin{align}\label{EQ229}
  \frac{|u_2(t,x)|}{|u_1(t,x)|}=e^{8(t-\lambda_b-1)}.
\end{align}
If $d_{2\pi}((m+n)\cdot x)<\pi/2$, the phase in \eqref{EQ227} has
spatial gradient $-m+\alpha(m+n)$.  Equations
\eqref{EQ222} and \eqref{EQ215} give
\[
  |-m+\alpha(m+n)|^2
  =\lambda_b+16-2\alpha m\cdot(m+n)+\alpha^2|m+n|^2
  =\lambda_b-8.
\]
Thus,
\begin{align}\label{EQ230}
  (\partial_t-\Delta) u_2=0
  \quad\text{if }d_{2\pi}((m+n)\cdot x)<\frac\pi2.
\end{align}
On the whole torus, direct differentiation yields
\begin{align}\label{EQ231}
  \frac{(\partial_t-\Delta) u_2}{u_2}
  =&24-2\underbrace{\alpha}_{\leq C\lambda_b^{-1}}\underbrace{\phi'((m+n)\cdot x)}_{\lec 1}
  \underbrace{m\cdot(m+n)}_{\leq C\lambda_b}\notag\\
  &+\underbrace{\alpha^2}_{\leq C\lambda_b^{-2}}\underbrace{\phi'((m+n)\cdot x)^2}_{\lec 1}\underbrace{|m+n|^2}_{\leq C\lambda_b}
  -i\underbrace{\alpha}_{\leq C\lambda_b^{-1}}\underbrace{\phi''((m+n)\cdot x)}_{\lec 1}\underbrace{|m+n|^2}_{\leq C\lambda_b}.
\end{align}
It follows from \eqref{EQ213}, \eqref{EQ221}, and
\eqref{EQ224} that
\begin{align}\label{EQ232}
  |(\partial_t-\Delta) u_2|\leq C|u_2|.
\end{align}
Choose $\psi_1,\psi_2\in C^\infty(\R)$ with values in $[0,1]$ such that
\begin{align*}
  &\psi_1=1\onon{(-\infty,\lambda_b+\frac{3}{2}]},
  \qquad\psi_1=0\onon{[\lambda_b+\frac{7}{4},\infty)},\\
  &\psi_2=0\onon{(-\infty,\lambda_b+\frac{1}{4}]},
  \qquad\psi_2=1\onon{[\lambda_b+\frac{1}{2},\infty)},
\end{align*}
and $|\psi_j'|\leq C$.  Set
\begin{align}\label{EQ233}
  U_b=\psi_1u_1+\psi_2u_2.
\end{align}
We now treat the five time intervals in Step 1.

On
$[\lambda_b,\lambda_b+\frac{1}{4}]$ only $u_1$ is present, and on
$[\lambda_b+\frac{7}{4},\lambda_b+2]$ only $u_2$ is present; hence, the desired quotient bound of $q_b$ follows from $(\partial_t-\Delta) u_1=0$ and~\eqref{EQ232}. 

On the two
cutoff-transition intervals $(\lambda_b+\frac{1}{4},\lambda_b+\frac{1}{2})\cup(\lambda_b+\frac{3}{2},\lambda_b+\frac{7}{4})$, \eqref{EQ229} gives
\begin{align*}
  \frac{|u_2|}{|u_1|}&\leq e^{-4}
  \comma\lambda_b+\frac{1}{4}\leq t\leq\lambda_b+\frac{1}{2},\\
  \frac{|u_1|}{|u_2|}&\leq e^{-4}
  \comma\lambda_b+\frac{3}{2}\leq t\leq\lambda_b+\frac{7}{4}.
\end{align*}
Consequently,
\begin{align*}
  |U_b|&\geq(1-e^{-4})|u_1|
  \andand
  |(\partial_t-\Delta) U_b|\leq C|u_1|\leq C|U_b|
  \comma\lambda_b+\frac{1}{4}\leq t\leq\lambda_b+\frac{1}{2},\\
  |U_b|&\geq(1-e^{-4})|u_2|\andand
  |(\partial_t-\Delta) U_b|\leq C|u_2|\leq C|U_b|
  \comma\lambda_b+\frac{3}{2}\leq t\leq\lambda_b+\frac{7}{4}.
\end{align*}

It remains to check the central piece
$\lambda_b+\frac{1}{2}\leq t\leq\lambda_b+\frac{3}{2}$, where $U_b=u_1+u_2$.  There are two
special cases.  If $d_{2\pi}((m+n)\cdot x)<\pi/2$, then
\eqref{EQ230} gives $(\partial_t-\Delta) U_b=0$, including at every zero of~$U_b$.  If $d_{2\pi}((m+n)\cdot x)\geq\pi/2$, choose the representative
$\theta\in[\pi/2,3\pi/2]$ of $(m+n)\cdot x$.  By
\eqref{EQ225}, we have
  \[\operatorname{dist}(\theta-\underbrace{\alpha\phi(\theta)}
  _{|\cdot|\leq \frac{\pi}{4}},2\pi\mathbb{Z})\geq
\frac{\pi}{4}.\]
Hence,
\begin{align}
\begin{split}
\label{EQ229b}
 \left|\frac{U_b}{u_1}\right|^2
 &=\left|1-e^{8(t-\lambda_b-1)}
 e^{-i(\theta-\alpha\phi(\theta))}\right|^2\\
 &=\left(1-e^{8(t-\lambda_b-1)}\right)^2
 +2e^{8(t-\lambda_b-1)}
 \left(1-\cos(\theta-\alpha\phi(\theta))\right)\\
 &\geq c\left(1+e^{8(t-\lambda_b-1)}\right)^2.
 \end{split}
\end{align}
Combining~\eqref{EQ229} with \eqref{EQ229b}, we obtain 
\begin{align}\label{EQ234}
  |U_b|=\frac{|U_b|}{|u_1|}|u_1|\geq c|u_1|\geq c(|u_1|+|u_2|)
  \onon{[\lambda_b+\frac{1}{2}, \lambda_b+\frac{3}{2}]}.
\end{align}
Together with \eqref{EQ232}, this proves
$|(\partial_t-\Delta) U_b|\leq C|U_b|$ on every part from Step 1.

\smallskip

\noindent\textbf{Step 2.} $\lambda_b+2\leq t\leq\lambda_b+4$.
There is only one mode in this step; the calculation below removes its
spatial correction while keeping the quotient uniformly bounded.  Choose
$\eta\in C^\infty(\R;[0,1])$ such that
\[
  \eta=1\onon{(-\infty,\lambda_b+\frac{5}{2}]},
  \eta=0\onon{[\lambda_b+\frac{7}{2},\infty)},\andandone
  |\eta'|\leq C,
\]
and set
\begin{align}\label{EQ235}
  U_b(t,x)=-d\exp\!\left[-(\lambda_b-8)t-i m\cdot x
  +i\eta(t)\alpha\phi((m+n)\cdot x)\right].
\end{align}
The function never vanishes.  Direct differentiation and
\eqref{EQ221}--\eqref{EQ224} give
\begin{align*}
  \frac{(\partial_t-\Delta) U_b}{U_b}
  =&24-2\eta\alpha\phi'((m+n)\cdot x)m\cdot(m+n)\\
  &+\eta^2\alpha^2\phi'((m+n)\cdot x)^2|m+n|^2
  -i\eta\alpha\phi''((m+n)\cdot x)|m+n|^2\\
  &+i\eta'\alpha\phi((m+n)\cdot x),
\end{align*}
and hence
\begin{align}\label{EQ236}
  \left|\frac{(\partial_t-\Delta) U_b}{U_b}\right|
  \leq 24+C\alpha\lambda_b+C\alpha^2\lambda_b+C\alpha\leq C.
\end{align}
The formulas from Steps 1 and 2 agree on a neighborhood of $t=\lambda_b+2$.

\smallskip
\noindent\textbf{Step 3.} $\lambda_b+4\leq t\leq\lambda_b+6$.
This step contains one nonvanishing mode.  We only change its temporal
decay rate from $\lambda_b-8$ to $\lambda_b+16$.
Put
\begin{align}\label{EQ237}
  d_1=d\exp[-(\lambda_b-8)(\lambda_b+5)].
\end{align}
Choose $\rho\in C^\infty(\R)$ such that
\begin{align}\label{EQ238}
  -7\leq\rho\leq17,
  \quad
  \rho=-7\onon{(-\infty,\lambda_b+\frac{9}{2}]}\comma
  \rho=17\onon{[\lambda_b+\frac{11}{2},\infty)},\andandone
  |\rho'|\leq C.
\end{align}
Define
\begin{align}\label{EQ239}
  U_b(t,x)
  =-d_1\exp\!\left(-(\lambda_b-1+\rho(t))(t-\lambda_b-5)-i m\cdot x\right).
\end{align}
This agrees with \eqref{EQ235} near $t=\lambda_b+4$.  Since
$|m|^2=\lambda_b+16$,
\begin{align}\label{EQ240}
  \frac{(\partial_t-\Delta) U_b}{U_b}
  =17-\rho(t)-\rho'(t)(t-\lambda_b-5),
\end{align}
which is uniformly bounded.  Near $t=\lambda_b+6$, the function has the form
\begin{align}\label{EQ241}
  u_4(t,x)=-d_2e^{-(\lambda_b+16)t-i m\cdot x}
\end{align}
for a constant $d_2=d_1e^{(\lambda_b+16)(\lambda_b+5)}=e^{16(\lambda_b+7)}$.

\smallskip
\noindent\textbf{Step 4.} $\lambda_b+6\leq t\leq\lambda_b+8$.
This step is again divided into five time intervals: a single-mode
interval for $u_4$, a transition introducing $u_5$, the central two-mode
interval, a transition removing $u_4$, and a single-mode interval for $u_5$.
Let
\begin{align}\label{EQ242}
  u_5(t,x)=a_be^{-(\lambda_b+8)t+i k\cdot x},
\end{align}
where $a_b=e^{8(\lambda_b+7)}$ is chosen so that
\begin{align}\label{EQ243}
  |u_4(\lambda_b+7,x)|=|u_5(\lambda_b+7,x)|.
\end{align}
Thus
\begin{align}\label{EQ244}
  \frac{|u_5(t,x)|}{|u_4(t,x)|}=e^{8(t-\lambda_b-7)}.
\end{align}
Both $u_4$ and $u_5$ solve the free heat equation.  Choose
$\zeta_1,\zeta_2\in C^\infty(\R;[0,1])$ such that
\begin{align*}
  &\zeta_1=1\onon{(-\infty,\lambda_b+\frac{15}{2}]}
  \andand\zeta_1=0\onon{[\lambda_b+\frac{31}{4},\infty)},\\
  &\zeta_2=0\onon{(-\infty,\lambda_b+\frac{25}{4}]}
  \andand\zeta_2=1\onon{[\lambda_b+\frac{13}{2},\infty)},
\end{align*}
with $|\zeta_j'|\leq C$, and set
\begin{align}\label{EQ245}
  U_b=\zeta_1u_4+\zeta_2u_5.
\end{align}
On $[\lambda_b+6,\lambda_b+\frac{25}{4}]$ only $u_4$ is present, and on
$[\lambda_b+\frac{31}{4},\lambda_b+8]$ only $u_5$ is present; both solve the free heat
equation.  On the two transition intervals, \eqref{EQ244} gives
\begin{align*}
  \frac{|u_5|}{|u_4|}&\leq e^{-4}
      \comma \lambda_b+\frac{25}{4}\leq t\leq\lambda_b+\frac{13}{2},\\
  \frac{|u_4|}{|u_5|}&\leq e^{-4}
    \comma 
  \lambda_b+\frac{15}{2}\leq t\leq\lambda_b+\frac{31}{4}.
\end{align*}
Thus, exactly as in Step 1, $|(\partial_t-\Delta) U_b|\leq C|U_b|$ where either
cutoff varies.  On $[\lambda_b+13/2,\lambda_b+15/2]$ both cutoffs equal one, so
$(\partial_t-\Delta) U_b=0$, including at all possible zeros.

We have now covered every point of the block.  The only possible zeros lie
in the central free regions identified in Steps 1 and 4.  Define
\begin{align}\label{EQ246}
  q_b(t,x)=
  \begin{cases}
  \dfrac{(\partial_t-\Delta) U_b(t,x)}{U_b(t,x)},&U_b(t,x)\neq0,\\[2mm]
  0,&U_b(t,x)=0.
  \end{cases}
\end{align}
Then $q_b$ is measurable, $|q_b|\leq C$, and
$(\partial_t-\Delta) U_b=q_bU_b$ everywhere.  Since $\zeta_1=0$ and
$\zeta_2=1$ near $t=\lambda_b+8$, formula \eqref{EQ218} follows from~\eqref{EQ245}.

We finish with the envelope estimates.  Define $M_b$ on the four time steps as follows:
in Step 1 take $M_b=|u_1|$ for $t\leq\lambda_b+1$ and
$M_b=|u_2|$ for $t\geq\lambda_b+1$; in Steps 2 and 3 take the modulus of the
single exponential; in Step 4 take $M_b=|u_4|$ for
$t\leq\lambda_b+7$ and $M_b=|u_5|$ for $t\geq\lambda_b+7$. 
The matching conditions above ensure that $M_b$ is continuous. 
Since every cutoff lies in
$[0,1]$,
\[
  |U_b|\leq2M_b.
\]
by
\eqref{EQ221} and \eqref{EQ224}, every spatial phase has gradient bounded by $C\sqrt\lambda_b$; hence
\[
  |\nabla_xU_b|\leq C\sqrt\lambda_b\,M_b.
\]
In Steps 1, 2, and 4, the logarithmic derivative of $M_b$ is one of
$-\lambda_b$, $-(\lambda_b-8)$, $-(\lambda_b+8)$, or $-(\lambda_b+16)$.  In Step 3 it equals
\[
  -(\lambda_b-1+\rho(t))-\rho'(t)(t-\lambda_b-5).
\]
By \eqref{EQ238}, the Step 3 expression is at most $-\lambda_b/2$ when
$b_0$ is sufficiently large.  The four constant rates are also at most
$-\lambda_b/2$.  Since $t\leq\lambda_b+8\leq2\lambda_b$ for such $b$, we obtain
\[
  \frac{\text{d}}{\text{d}t}\log M_b(t)\leq-\frac{t}{4}.
\]
This proves \eqref{EQ219}--\eqref{EQ220}, with $c_0=\frac{1}{4}$.
\end{proof}

\subsection{The global periodic construction}
\label{sec0502}
The passage from a single block to all positive times requires three checks:
Consecutive blocks match smoothly through their free endpoint pieces, the
block envelopes glue to give Gaussian decay, and the resulting bounded
potential is not identically zero.

\cole
\begin{Proposition}[Global periodic mode-switching profile]\label{P03}
There exist a bounded complex-valued function $q$ and a smooth function $W$,
both $2\pi$-periodic in $x$, such that
\begin{align}\label{EQ248}
  q\in L^\infty((0,\infty)\times\R^3),
\end{align}
and
\begin{align}\label{EQ249}
  \partial_tW-\Delta W=qW
  \qquad\text{on }(0,\infty)\times\R^3,
\end{align}
such that, for some $\lambda_0>0$ and $\nu_0\in\mathbb{Z}^3$ with
$|\nu_0|^2=\lambda_0$,
\begin{align}\label{EQ250}
  W(t,x)=e^{-\lambda_0t+i\nu_0\cdot x}
  \comma
  0\leq t\leq\frac14
  ,
\end{align}
and
\begin{align}\label{EQ251}
  |W(t,x)|+|\nabla_xW(t,x)|\leq C_0e^{-c_0t^2}
  \comma
  t\geq0
  \commaone
  x\in\R^3
  .
\end{align}
More precisely, there is a positive, continuous, piecewise smooth function
$M$ on $[0,\infty)$ such that
\begin{align}\label{EQ252}
  |W(t,x)|\leq2M(t)
  \comma
  |\nabla_xW(t,x)|\leq C\sqrt{1+t}\,M(t),
  \andand
  M(t)\leq Ce^{-ct^2}.
\end{align}
Moreover,
\begin{align}\label{EQ253}
  0<\|q\|_{L^\infty((0,\infty)\times\R^3)}<\infty.
\end{align}
\end{Proposition}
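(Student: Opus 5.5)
The plan is to concatenate the switching blocks of Lemma~\ref{L03} for $b=b_0,b_0+1,b_0+2,\dots$. The block for $b$ lives on $[\lambda_b,\lambda_b+8]$, and $\lambda_{b+1}=\lambda_b+8$, so the blocks tile $[\lambda_{b_0},\infty)$ with no gaps. At the junction $t=\lambda_{b+1}$, \eqref{EQ218} says that the $b$-th block equals $a_be^{-\lambda_{b+1}t+in_{b+1}\cdot x}$ on a neighborhood to the left. Meanwhile \eqref{EQ217}, applied with $b+1$, says that the $(b+1)$-st block equals $e^{-\lambda_{b+1}t+in_{b+1}\cdot x}$ to the right.

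I would therefore rescale each block by a constant and set $W=\Pi_b U_b$ on $[\lambda_b,\lambda_b+8]$, with $\Pi_{b_0}=1$ and $\Pi_{b+1}=\Pi_b a_b$. Then $W$ is a single free heat mode on a two-sided neighborhood of each junction, hence smooth across it. Since the equation is linear, $(\partial_t-\Delta)W=qW$ holds with $q=q_b$ on each block. This gives $|q|\le C$, with $C$ uniform in $b$ by \eqref{EQ216}.

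To start at $t=0$ rather than at $\lambda_{b_0}$, I would translate in time. Put $\tilde W(t,x)=e^{\lambda_0\lambda_{b_0}}W(t+\lambda_{b_0},x)$ with $\lambda_0=\lambda_{b_0}$ and $\nu_0=n_{b_0}$. By \eqref{EQ217}, this equals $e^{-\lambda_0t+i\nu_0\cdot x}$ on $[0,1/4]$, which is \eqref{EQ250}. The translated $q$ is still bounded, and it is $2\pi$-periodic because every mode has an integer frequency. The function $\phi((m+n)\cdot x)$ is also periodic, since $m+n\in\mathbb Z^3$ and $\phi$ is $2\pi$-periodic.

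For the decay, I would define $M(t)=\Pi_bM_b(t+\lambda_{b_0})e^{\lambda_0\lambda_{b_0}}$ blockwise. It is continuous at the junctions by the endpoint values stated in Lemma~\ref{L03}: $\Pi_bM_b(\lambda_b+8)=\Pi_ba_be^{-\lambda_{b+1}^2}=\Pi_{b+1}M_{b+1}(\lambda_{b+1})$. Integrating \eqref{EQ220} gives $\log M(t)-\log M(0)\le -c_0\int_0^t(\tau+\lambda_{b_0})\,d\tau\le -\tfrac{c_0}{2}t^2$, so $M(t)\le Ce^{-ct^2}$.

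The gradient bound follows from \eqref{EQ219}, since $\sqrt{\lambda_b}\le C\sqrt{1+t}$ on the $b$-th block. Then $\sqrt{1+t}\,e^{-ct^2}\le Ce^{-c't^2}$, which yields \eqref{EQ251}.

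The last item is $\|q\|_\infty>0$, i.e.\ $q$ is not identically zero. I expect this to be the only point needing an actual check, although it is easy. In Step~3 of the first block, \eqref{EQ240} gives $q=17-\rho(t)-\rho'(t)(t-\lambda_b-5)$. On $t\le\lambda_b+9/2$ we have $\rho=-7$, so $q=24\neq0$ there, on a set of positive measure.

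Finiteness of $\|q\|_\infty$ is the uniform bound from \eqref{EQ216}. Measurability of $q$ holds because it is defined blockwise via \eqref{EQ246}.

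I anticipate the main subtlety to be bookkeeping rather than analysis. First, $W$ must be exactly smooth across the junctions, which relies on the free single-mode pieces on the overlap neighborhoods $[\lambda_b+31/4,\lambda_b+8]$ and $[\lambda_{b+1},\lambda_{b+1}+1/4]$. Second, the constants in Lemma~\ref{L03} must be independent of $b$, which is asserted there.
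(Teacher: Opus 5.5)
Your proposal is correct and follows the paper's construction. Your normalization $e^{\lambda_{b_0}^2}\Pi_{b_j}$ is exactly the paper's $\gamma_j$, and the junction matching, the continuity of $M$, the integration of \eqref{EQ220}, and the gradient bound all match the paper. The one point of divergence is the check that $q\not\equiv0$. You read off $q=24$ directly from \eqref{EQ240} on the plateau $\rho=-7$ of Step~3. The paper instead argues by contradiction: if $q\equiv0$, then $W$ would be the free heat mode with only exponential decay, contradicting \eqref{EQ251}. Both arguments are valid.
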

\colb

\begin{proof}[Proof of Proposition~\ref{P03}]
Fix $b_0$ as in Lemma \ref{L03}, and extend all functions on $\T^3$
$2\pi$-periodically to~$\R^3$.  We first choose the multiplier for every
block and verify the two junction identities; we then apply the uniform
estimates of Lemma~\ref{L03}.
For $j\geq0$, set
\begin{align*}
  b_j=b_0+j.
\end{align*}
Then,
\begin{align*}
  \lambda_{b_j}=\lambda_{b_0}+8j
  \andand
  \lambda_{b_{j+1}}=\lambda_{b_j}+8.
\end{align*}
The construction in Lemma \ref{L03} gives
$a_b=e^{8(\lambda_b+7)}$.  Define
\begin{align*}
  \gamma_j
  =\exp\left(\lambda_{b_0}^2
  +8j(\lambda_{b_0}+7)+32j(j-1)\right)
    \comma   
  j\geq0
  .
\end{align*}
It follows directly that
\begin{align*}
  \gamma_0=e^{\lambda_{b_0}^2}
  \andand
  \gamma_{j+1}
  =e^{8(\lambda_{b_j}+7)}\gamma_j
  =a_{b_j}\gamma_j.
\end{align*}
On $8j\leq t\leq 8j+8$, define
\begin{align*}
  W(t,x)&=\gamma_jU_{b_j}(t+\lambda_{b_0},x),\\
  M(t)&=\gamma_jM_{b_j}(t+\lambda_{b_0}).
\end{align*}
We first check the junction at $t=8j+8$.  The endpoint formulas
\eqref{EQ217}--\eqref{EQ218} hold on neighborhoods of the endpoints.  Since
$\gamma_{j+1}=a_{b_j}\gamma_j$, the formulas on both sides of the junction
are the same free heat mode
\begin{align*}
  \gamma_{j+1}
  e^{-\lambda_{b_{j+1}}(t+\lambda_{b_0})
  +in_{b_{j+1}}\cdot x}.
\end{align*}
Thus all space-time derivatives of the two pieces agree at $t=8j+8$.
Moreover, the endpoint identities in Lemma \ref{L03} give
\begin{align}
  \begin{split}
  \gamma_jM_{b_j}(\lambda_{b_j}+8)
  &=\gamma_ja_{b_j}e^{-(\lambda_{b_j}+8)^2}
  =\gamma_{j+1}e^{-\lambda_{b_{j+1}}^2}
  =\gamma_{j+1}M_{b_{j+1}}(\lambda_{b_{j+1}}).
  \end{split}
   \llabel{EQ02}
\end{align}
Consequently, $W$ is smooth, while $M$ is positive, continuous, and
piecewise smooth on $[0,\infty)$.
For $8j<t<8j+8$, define
\begin{align*}
  q(t,x)=q_{b_j}(t+\lambda_{b_0},x),
\end{align*}
and set $q=0$ at the junction times. Since the blocks are free heat modes near their endpoints, the equation
continues to hold at every junction. 
In addition,
\begin{align*}
  \|q\|_{L^\infty((0,\infty)\times\R^3)}
  \leq\sup_{j\geq0}
  \|q_{b_j}\|_{L^\infty(
  [\lambda_{b_j},\lambda_{b_j}+8]\times\T^3)}
  \leq C.
\end{align*}
We next verify that the growing multipliers $\gamma_j$ do not affect the
uniform estimates. On the $j$-th block, the same positive constant
$\gamma_j$ multiplies both $U_{b_j}$ and $M_{b_j}$ and is independent of
$t$ and~$x$.  Hence, \eqref{EQ219}--\eqref{EQ220} give
\begin{align*}
  |W(t,x)|
  &\leq2M(t),\\
  |\nabla_xW(t,x)|
  &\leq C\sqrt{\lambda_{b_j}}\,M(t)
   \leq C\sqrt{1+t}\,M(t),\\
  \frac{\text{d}}{\text{d}t}\log M(t)
  &\leq-c_0(t+\lambda_{b_0})
   \leq-c_0t
\end{align*}
away from the corners of~$M$.  Here we used
\begin{align*}
  \lambda_{b_j}=\lambda_{b_0}+8j
  \leq(\lambda_{b_0}+1)(1+t)
    \comma 
  8j\leq t\leq8j+8
  .
\end{align*}
The growth of $\gamma_j$ also produces no growth at a junction.  Indeed,
using $a_{b_j}=e^{8(\lambda_{b_j}+7)}$, we have
\begin{align*}
  \frac{M(8j+8)}{M(8j)}
  &=\frac{\gamma_ja_{b_j}e^{-\lambda_{b_{j+1}}^2}}
  {\gamma_je^{-\lambda_{b_j}^2}}\\
  &=\exp\left(8(\lambda_{b_j}+7)
  -(\lambda_{b_j}+8)^2+\lambda_{b_j}^2\right)\\
  &=e^{-8(\lambda_{b_j}+1)}<1.
\end{align*}
Thus the growth of the matching multiplier is dominated exactly by the
decay already present in the block.
Finally,
\begin{align*}
  M(0)
  =\gamma_0M_{b_0}(\lambda_{b_0})
  =e^{\lambda_{b_0}^2}e^{-\lambda_{b_0}^2}=1.
\end{align*}
Since $\log M$ is continuous and piecewise smooth, the preceding
logarithmic derivative estimate can be integrated across its finitely many
junction points on every compact time interval. Therefore,
\begin{align}\label{EQ254}
  M(t)\leq\exp\!\left(-\frac{c_0}{2}t^2\right).
\end{align}
It follows that
\begin{align*}
  |W(t,x)|+|\nabla_xW(t,x)|
  \leq C(1+\sqrt{1+t})e^{-c_0t^2/2}
  \leq C_0e^{-c_0t^2/4}.
\end{align*}
After decreasing $c_0$ in \eqref{EQ251} if necessary, this proves
\eqref{EQ251}--\eqref{EQ252}.
On the first quarter of the first block, we have
\begin{align*}
  W(t,x)
  &=e^{\lambda_{b_0}^2}
  e^{-\lambda_{b_0}(t+\lambda_{b_0})+in_{b_0}\cdot x}
  =e^{-\lambda_{b_0}t+in_{b_0}\cdot x}.
\end{align*}
Thus \eqref{EQ250} holds with
$\lambda_0=\lambda_{b_0}$ and $\nu_0=n_{b_0}$, for which
$|\nu_0|^2=\lambda_0$.

It remains to show that $q\not\equiv0$. Otherwise, \eqref{EQ249} and the
smoothness of $W$ imply that $W$ is the free heat flow on $\T^3$ with
initial value $e^{in_{b_0}\cdot x}$.  Hence,
\begin{align*}
  \|W(t)\|_{L^2_x(\T^3)}
  =(2\pi)^{3/2}e^{-\lambda_{b_0}t},
\end{align*}
which contradicts \eqref{EQ251} for all sufficiently large~$t$.  This proves~\eqref{EQ253}.
\end{proof}

\subsection{The Euclidean construction}
\label{sec0503}
The periodic profile $W$ is not square integrable on~$\R^3$.
Instead, we look for the
solution of the form $\widetilde u=gW$ with a positive $L^2$ envelope~$g$.  The product rule gives
\begin{align*}
  (\partial_t-\Delta)(gW)
  =gqW+W(\partial_t-\Delta)g-2\nabla g\cdot\nabla W.
\end{align*}
Thus, to obtain a bounded quotient potential, we need
to invent a suitable decaying smooth $g$ such that
\[
|\nabla g\cdot\nabla W|\leq Cg|W|,
\]
including at the zeros of~$W$.
Further away from cancellation, this follows from $|W|\geq cM$ and the ordinary
gradient estimates.  Cancellation can occur only when the two modes in a
switching step have comparable moduli and nearly opposite phases, namely
near $t=8j+1$ and $t=8j+7$. Near these crossings, we isolate the directions $m+n$ and $m+k$ in~$\nabla W$. The envelope will be chosen so that its derivatives in these
directions acquire the same cosine factors that control~$|W|/M$.

In the block $8j\leq t\leq8j+8$, set
\begin{align*}
  b=b_0+j,
  \qquad n=n_b,
  \qquad k=n_{b+1},
  \andandone
  m=n_{b+2}.
\end{align*}
The number $\alpha$ and the modes $u_1,u_2,u_4,u_5$ are those constructed
in Lemma \ref{L03} after the common time shift and multiplication by~$\gamma_j$.  Neither operation changes their modulus ratios.

\cole
\begin{Lemma}[The two crossings in each block]\label{L04}
Let $W$ and $M$ be given by Proposition~\ref{P03}.  In the
block $8j\leq t\leq8j+8$, we have
\begin{align}\label{EQ257}
  c\sqrt{1+t}\leq |m+n|,|m+k|\leq C\sqrt{1+t}.
\end{align}
Define the two possible cancellation regions by
\begin{align}
  \mathcal C_{1,j}
  &=\left\{(t,x):
  |t-(8j+1)|<\frac14\comma
  d_{2\pi}((m+n)\cdot x)<\frac\pi4\right\},
  \label{EQ258}\\
  \mathcal C_{2,j}
  &=\left\{(t,x):
  |t-(8j+7)|<\frac14\comma
  d_{2\pi}((m+k)\cdot x)<\frac\pi4\right\}
  ,
  \label{EQ259}
\end{align}
and set
\begin{align*}
  \mathcal C
  =\bigcup_{j\geq0}
  \left(\mathcal C_{1,j}\cup\mathcal C_{2,j}\right).
\end{align*}
Then
\begin{align}\label{EQ260}
  |W(t,x)|\geq cM(t)
  \qquad\text{on }\mathcal C^c.
\end{align}
On $\mathcal C_{1,j}$,
\begin{align}
  &
  W=u_1+u_2\comma
  \nabla W=inW-i(1-\alpha)(m+n)u_2,
  \label{EQ261}\\
  &
  |u_1|+|u_2|\leq CM(t)\comma
  \bigl(1-\cos((m+n)\cdot x)\bigr)M(t)
  \leq C|W(t,x)|
  ,
  \label{EQ262}
\end{align}
while on $\mathcal C_{2,j}$,
\begin{align}
  &W=u_4+u_5\comma
  \nabla W=-imW+i(m+k)u_5,
  \label{EQ263}\\
  &|u_4|+|u_5|\leq CM(t)\comma
  \bigl(1-\cos((m+k)\cdot x)\bigr)M(t)
  \leq C|W(t,x)|.
  \label{EQ264}
\end{align}
All constants are independent of~$j$.
\end{Lemma}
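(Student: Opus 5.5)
The plan is to prove every assertion block by block, after undoing the time shift $t\mapsto t+\lambda_{b_0}$ and the factor $\gamma_j$. Neither operation changes modulus ratios or gradients relative to the function, so it suffices to work with the modes $u_1,u_2,u_4,u_5$ of Lemma~\ref{L03}. In these variables the two crossings $t=\lambda_b+1$ and $t=\lambda_b+7$ correspond to $8j+1$ and $8j+7$.

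\textbf{Step 0: the bound \eqref{EQ257}.} Since the components of $n,k,m$ are nonnegative, we have
\[
|m|^2\leq|m+n|^2,\ |m+k|^2\leq 4|m|^2 .
\]
Combined with $|m|^2=\lambda_b+16$ and $\lambda_b=\lambda_{b_0}+8j$, this is comparable to $1+t$ on the block, uniformly in $j$.

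\textbf{Step 1: the lower bound \eqref{EQ260} on $\mathcal C^c$.} I would go through the four steps of Lemma~\ref{L03}, recalling that $M$ is the modulus of the dominant single exponential.
\begin{itemize}
\item In Steps~2 and~3, $W$ is a single nonvanishing exponential with $|W|=M$.
\item On the single-mode pieces of Steps~1 and~4 the same holds.
\item On the cutoff-transition pieces, the ratio bound $e^{-4}$ gives $|W|\geq(1-e^{-4})\max(|u_1|,|u_2|)\geq cM$, and likewise with $u_4,u_5$.
\item On the central piece of Step~1 with $d_{2\pi}((m+n)\cdot x)\geq\pi/2$, \eqref{EQ234} already gives $|W|\geq c(|u_1|+|u_2|)\geq cM$.
\end{itemize}
The remaining case is $d_{2\pi}(\theta)<\pi/2$, where $\theta=(m+n)\cdot x$. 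There $\phi(\theta)=\theta$, and with $r=e^{8(t-\lambda_b-1)}$,
\[
\Bigl|\frac{W}{u_1}\Bigr|^2=(1-r)^2+2r\bigl(1-\cos((1-\alpha)\theta)\bigr).
\]
Off $\mathcal C_{1,j}$ one of two things holds:
\begin{itemize}
\item $|t-(8j+1)|\geq1/4$, so $|1-r|\geq c(1+r)$;
\item $d_{2\pi}(\theta)\geq\pi/4$ with $|t-(8j+1)|<1/4$, so $r\in[e^{-2},e^2]$ and $(1-\alpha)d_{2\pi}(\theta)\geq\pi/8$ because $\alpha\leq 1/2$.
\end{itemize}
In both cases $|W|\geq c|u_1|(1+r)\geq c(|u_1|+|u_2|)\geq cM$. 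The central piece of Step~4 is identical with $u_5/u_4=-e^{8(t-\lambda_b-7)}e^{i(m+k)\cdot x}$, and here there is no phase correction at all.

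\textbf{Step 2: the identities on the crossing regions.} On $\mathcal C_{1,j}$ both cutoffs equal $1$, so $W=u_1+u_2$. Since $\phi(\theta)=\theta$ there, we have $\nabla u_1=inu_1$ and $\nabla u_2=(-im+i\alpha(m+n))u_2$. Regrouping,
\[
\nabla W=in(u_1+u_2)-i(1-\alpha)(m+n)u_2,
\]
which is \eqref{EQ261}. Similarly $\nabla u_4=-imu_4$ and $\nabla u_5=iku_5$ give \eqref{EQ263}. The bound $|u_1|+|u_2|\leq CM$ holds because $r\in[e^{-2},e^2]$ near the crossing.

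For the cosine bound, I would use that $1-\cos((1-\alpha)\theta)\geq c(1-\cos\theta)$ for $|\theta|\leq\pi/4$, since both sides are comparable to $\theta^2$ and $1-\alpha\geq1/2$. Then
\[
|W|\geq|u_1|\sqrt{2r\bigl(1-\cos((1-\alpha)\theta)\bigr)}\geq cM\sqrt{1-\cos\theta}\geq cM(1-\cos\theta),
\]
where the last inequality uses $1-\cos\theta\leq2$. This gives \eqref{EQ262}, and \eqref{EQ264} follows in the same way.

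The main obstacle is the bookkeeping on the Step~1 central piece. There the phase correction $\alpha\phi$ shifts the cancellation set, and one must check that the $(1-\alpha)$-rescaled cosine is uniformly comparable to $1-\cos\theta$. The smallness $\alpha\leq1/2$ from \eqref{EQ225}, uniform in $j$, is exactly what makes this work.
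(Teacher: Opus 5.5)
Your proposal is correct and takes essentially the same route as the paper. It uses the same nonnegative-component bound for \eqref{EQ257}, the same case split (single-mode, cutoff-transition, and central two-mode pieces, using the explicit formulas for $|W/u_1|^2$ and $|W/u_4|^2$) for \eqref{EQ260}, and the same regrouping of the mode gradients for \eqref{EQ261} and \eqref{EQ263}. The differences are only cosmetic: you cite \eqref{EQ234} when $d_{2\pi}((m+n)\cdot x)\geq\pi/2$ instead of re-deriving it, and you reach \eqref{EQ262} through $\sqrt{1-\cos\theta}$ rather than the paper's intermediate bound $|W|\geq c|\theta|M$.
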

\colb

\begin{proof}[Proof of Proposition~\ref{L04}]
We prove the frequency estimate, the lower bound away from cancellation,
and the estimates in the two cancellation regions.
Since all components of $m,n,k$ are nonnegative, \eqref{EQ215} gives
\begin{align*}
  \lambda_b+16
  &\leq |m+n|^2
  \leq2(|m|^2+|n|^2)
  \leq4(\lambda_b+16)
  \end{align*}
and
  \begin{align*}
  \lambda_b+16
  &\leq |m+k|^2
  \leq2(|m|^2+|k|^2)
  \leq4(\lambda_b+16).
\end{align*}
Since $\lambda_b=\lambda_{b_0}+8j\sim1+t$ on
$8j\leq t\leq8j+8$, this proves~\eqref{EQ257}.

We next prove~\eqref{EQ260}.  Outside the two central two-mode intervals
\begin{align*}
  \left[8j+\frac12,8j+\frac32\right],
  \qquad
  \left[8j+\frac{13}{2},8j+\frac{15}{2}\right],
\end{align*}
either $|W|=M$, or we are
in a cutoff transition where the mode defining $M$ has coefficient one
and the other mode has modulus at most~$e^{-4}M$.  In the latter case,
\begin{align*}
  |W|\geq M-e^{-4}M=(1-e^{-4})M.
\end{align*}
It therefore remains to consider the two displayed intervals.
On the first one, $W=u_1+u_2$ and
\begin{align*}
  \frac{|u_2|}{|u_1|}=e^{8(t-8j-1)}.
\end{align*}
If $|t-(8j+1)|\geq1/4$, the larger mode is precisely the one defining
$M$, and hence
\begin{align*}
  |W|
  &\geq
  \begin{cases}
    |u_1|-|u_2|\geq(1-e^{-2})M,
    &t\leq8j+\dfrac34,\\[1mm]
    |u_2|-|u_1|\geq(1-e^{-2})M,
    &t\geq8j+\dfrac54.
  \end{cases}
\end{align*}
Suppose now that $|t-(8j+1)|<1/4$ and
$(t,x)\notin\mathcal C_{1,j}$.
Choose $\theta\in[-\pi,\pi]$ representing $(m+n)\cdot x$.  Then
$|\theta|=d_{2\pi}((m+n)\cdot x)\geq\pi/4$, and \eqref{EQ225} gives
\begin{align*}
  d_{2\pi}(\theta-\alpha\phi(\theta))
  &\geq
  \begin{cases}
    (1-\alpha)|\theta|\geq\dfrac\pi8,
    &|\theta|\leq\dfrac\pi2,\\[1mm]
    |\theta|-|\alpha\phi(\theta)|\geq\dfrac\pi4,
    &|\theta|\geq\dfrac\pi2.
  \end{cases}
\end{align*}
Therefore,
\begin{align}\label{EQ265}
\begin{split}
  \frac{|W|^2}{|u_1|^2}
  &=\left|1+\frac{u_2}{u_1}\right|^2=\left|1-e^{8(t-8j-1)}
    e^{-i(\theta-\alpha\phi(\theta))}\right|^2\\
  &=\left(1-e^{8(t-8j-1)}\right)^2
    +2e^{8(t-8j-1)}
    \left(1-\cos(\theta-\alpha\phi(\theta))\right)
  \geq c.
  \end{split}
\end{align}
By the definition of $M$ and the modulus ratio,
\begin{align*}
  e^{-2}M\leq|u_i|\leq M\quad \text{for } i=1,2
  \withwith
  |W|\geq c|u_1|\geq cM.
\end{align*}
On the second central interval, $W=u_4+u_5$ and
\begin{align*}
  \frac{|u_5|}{|u_4|}=e^{8(t-8j-7)}.
\end{align*}
The same dominant-mode argument gives $|W|\geq(1-e^{-2})M$ when
$|t-(8j+7)|\geq1/4$.  If $|t-(8j+7)|<1/4$ and
$(t,x)\notin\mathcal C_{2,j}$, choose $\theta\in[-\pi,\pi]$
representing $(m+k)\cdot x$.  Then
$|\theta|\geq\pi/4$, and
\begin{align*}
  \frac{|W|^2}{|u_4|^2}
  &=\left|1+\frac{u_5}{u_4}\right|^2
  =\left|1-e^{8(t-8j-7)}e^{i\theta}\right|^2\\
  &=\left(1-e^{8(t-8j-7)}\right)^2
    +2e^{8(t-8j-7)}(1-\cos\theta)
  \geq c.
\end{align*}
By the definition of $M$ and the modulus ratio,
\begin{align*}
  e^{-2}M\leq|u_i|\leq M,\quad \text{for } i=4,5,
  \withwith
  |W|\geq c|u_4|\geq cM
  ,
\end{align*}
which proves~\eqref{EQ260}.

It remains to verify the estimates on~$\mathcal C$.
On $\mathcal C_{1,j}$, choose $\theta\in(-\pi/4,\pi/4)$ representing
$(m+n)\cdot x$.  Since $\phi(\theta)=\theta$ and $\phi'(\theta)=1$,
we have
\begin{align}\label{EQ266}
  \frac{u_2}{u_1}
  =-e^{8(t-8j-1)}e^{-i(1-\alpha)\theta}
  ,
\end{align}
and consequently,
\begin{align*}
  \frac{|W|^2}{|u_1|^2}
  &=\left|1+\frac{u_2}{u_1}\right|^2\\
  &=\left(1-e^{8(t-8j-1)}\right)^2
    +2e^{8(t-8j-1)}
    \left(1-\cos((1-\alpha)\theta)\right)\geq c\theta^2.
\end{align*}
Together with $e^{-2}M\leq|u_i|\leq M$, $i=1,2$,  this gives
\begin{align*}
  |W|&\geq c|\theta||u_1|
       \geq c|\theta|M,\\
  |u_1|+|u_2|&\leq2M,\\
  \bigl(1-\cos((m+n)\cdot x)\bigr)M
  &=(1-\cos\theta)M
   \leq C|\theta|M
   \leq C|W|.
\end{align*}
Moreover,
\begin{align*}
  \nabla W
  &=inu_1+i(-m+\alpha(m+n))u_2\\
  &=inW-i(1-\alpha)(m+n)u_2.
\end{align*}
This proves \eqref{EQ261}--\eqref{EQ262}.
On $\mathcal C_{2,j}$, choose $\theta\in(-\pi/4,\pi/4)$ representing
$(m+k)\cdot x$.  In this case,
\begin{align*}
  \frac{|W|^2}{|u_4|^2}
  &=\left|1+\frac{u_5}{u_4}\right|^2
  =\left(1-e^{8(t-8j-7)}\right)^2
    +2e^{8(t-8j-7)}(1-\cos\theta)
   \geq c\theta^2
   \end{align*}
and
  \begin{align*}
  \nabla W
  &=-imu_4+iku_5
   =-imW+i(m+k)u_5.
\end{align*}
Since $e^{-2}M\leq|u_i|\leq M$, for $i=4,5$, the same calculation gives
\begin{align*}
  |W|&\geq c|\theta||u_4|
       \geq c|\theta|M,\\
  |u_4|+|u_5|&\leq2M,\\
  \bigl(1-\cos((m+k)\cdot x)\bigr)M
  &\leq C|W|
  ,
\end{align*}
and \eqref{EQ263}--\eqref{EQ264} is proven.
\end{proof}

For $\nu\in\mathbb{Z}^3\setminus\{0\}$, define the bounded periodic vector field
\begin{align}\label{EQ267}
  A_\nu(x)=\frac{\nu}{|\nu|^2}\sin(\nu\cdot x)
  ,
\end{align}
whose essential feature is the identity
\begin{align}\label{EQ268}
  D_x(x-A_\nu(x))\nu
  =\bigl(1-\cos(\nu\cdot x)\bigr)\nu.
\end{align}
The field in the next lemma is prescribed on a half-unit plateau around
each crossing. Between these plateaus there are only two types of gaps:
the long gap between the two crossings of a block and the short gap
between the second crossing of one block and the first crossing of the next.

\cole
\begin{Lemma}[A global flattening field]\label{L05}
There is a real vector-valued function
\[
  a\in C^\infty([0,\infty)\times\R^3;\R^3)
\]
such that
\begin{align}
  a(t,x)&=A_{m+n}(x)
  \qquad\text{if }|t-(8j+1)|\leq\frac14
  \label{EQ270}
  \end{align}
and
  \begin{align}
  a(t,x)&=A_{m+k}(x)
  \qquad\text{if }|t-(8j+7)|\leq\frac14,
  \label{EQ271}
\end{align}
with
\begin{align}
  |a(t,x)|+|\partial_ta(t,x)|
  &\leq C(1+t)^{-\frac{1}{2}}\label{EQ272}
  \end{align}
and
  \begin{align}
  |D_xa(t,x)|\leq C
  \andand
  |D_x^2a(t,x)|&\leq C(1+t)^{\frac{1}{2}}.
  \label{EQ273}
\end{align}
\end{Lemma}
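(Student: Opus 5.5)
We will construct $a$ by gluing the prescribed plateau fields in time with fixed smooth cutoffs, interpolating linearly between consecutive plateau fields in each gap. Throughout, the frequencies $m,n,k$ depend on the block index $j$; we write $\nu_{1,j}=m+n$ and $\nu_{2,j}=m+k$ for the two crossing frequencies of block $j$. By \eqref{EQ257}, $c\sqrt{1+t}\leq|\nu_{i,j}|\leq C\sqrt{1+t}$ on the block and on its neighbors, since $1+t$ changes by at most a bounded factor over a distance $16$.

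The basic estimates for a single field come from \eqref{EQ267}. We have $|A_\nu|\leq|\nu|^{-1}$. Next, $D_xA_\nu=\frac{\nu\otimes\nu}{|\nu|^2}\cos(\nu\cdot x)$, so $|D_xA_\nu|\leq1$. Finally, $|D_x^2A_\nu|\leq|\nu|$. With $|\nu|\sim\sqrt{1+t}$, this gives $|A_\nu|\leq C(1+t)^{-1/2}$, $|D_xA_\nu|\leq C$, and $|D^2_xA_\nu|\leq C(1+t)^{1/2}$.

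Now list the plateau centers in increasing order: $8j+1$ and $8j+7$ for $j\geq0$. Consecutive centers are separated by either $6$ or $2$. Fix one smooth function $\beta\in C^\infty(\R;[0,1])$ with $\beta=0$ on $(-\infty,0]$ and $\beta=1$ on $[1,\infty)$, and use translates and dilates of it only by fixed factors. This gives finitely many profile shapes, so all derivatives are bounded by absolute constants.
\begin{itemize}
\item On the long gap $[8j+\frac54,8j+\frac{27}4]$, set $a=(1-\beta_j(t))A_{\nu_{1,j}}+\beta_j(t)A_{\nu_{2,j}}$, where $\beta_j$ rises from $0$ to $1$ inside the gap.
\item On the short gap $[8j+\frac{29}4,8j+\frac{35}4]$, set $a=(1-\tilde\beta_j(t))A_{\nu_{2,j}}+\tilde\beta_j(t)A_{\nu_{1,j+1}}$.
\item On $[0,\frac34]$, set $a=A_{\nu_{1,0}}$.
\end{itemize}
On the plateaus, \eqref{EQ270}--\eqref{EQ271} hold by construction. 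Since every cutoff is constant near the gap endpoints, the pieces agree on neighborhoods of the junctions. Hence $a\in C^\infty$, and $a$ is real and vector-valued.

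The estimates then follow directly, because at each time $a$ is a convex combination of at most two fields $A_\nu$ with $|\nu|\sim\sqrt{1+t}$. Convexity gives $|a|\leq C(1+t)^{-1/2}$, $|D_xa|\leq C$, and $|D_x^2a|\leq C(1+t)^{1/2}$, which is \eqref{EQ273}. The time derivative vanishes on the plateaus. In the gaps, $\partial_ta=\beta'(A_{\nu'}-A_\nu)$ with $|\beta'|\leq C$, so $|\partial_ta|\leq C(1+t)^{-1/2}$, completing \eqref{EQ272}.

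The only point needing care is the uniformity of these constants in $j$, which reduces to the comparability of $|\nu_{i,j}|$ and $|\nu_{i,j+1}|$ with $\sqrt{1+t}$ across adjacent blocks. This follows from $\lambda_{b_{j+1}}=\lambda_{b_j}+8$ together with \eqref{EQ257}.
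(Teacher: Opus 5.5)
Your proposal is correct and is essentially the paper's proof: the paper also sets $a=A_{\nu}$ on $[0,\frac54]$ and on each plateau. It interpolates convexly, $a=(1-\chi(t))A_\nu+\chi(t)A_\mu$, on the long gaps $[8j+\frac54,8j+\frac{27}4]$ and short gaps $[8j+\frac{29}4,8j+\frac{35}4]$ using rescaled copies of one fixed cutoff, and it obtains the bounds from $|A_\nu|\le|\nu|^{-1}$, $|D_xA_\nu|\le1$, $|D_x^2A_\nu|\le C|\nu|$ with $|\nu|\sim\sqrt{1+t}$. The only cosmetic difference is that the paper's cutoff transitions across the whole gap, with smoothness coming from the vanishing of all derivatives of $h$ at $0$ and $1$, whereas you keep it constant near the gap endpoints.
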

\colb

\begin{proof}[Proof of Proposition~\ref{L05}]
Fix $h\in C^\infty(\R;[0,1])$ satisfying
\[
  h=0\text{ on }(-\infty,0]
\andand
  h=1\text{ on }[1,\infty).
\]
Set $a=A_{n_{b_0+2}+n_{b_0}}$ on $0\leq t\leq5/4$, and use the
plateau values in \eqref{EQ270}--\eqref{EQ271} at all later crossings.  On the long gap
\[
  8j+\frac54\leq t\leq8j+\frac{27}{4},
\]
define
\begin{align}\label{EQ274}
  a(t,x)={}&\left(1-h\left(\frac{2}{11}
  \left(t-8j-\frac54\right)\right)\right)
  A_{n_{b_0+j+2}+n_{b_0+j}}(x)\notag\\
  &+h\left(\frac{2}{11}
  \left(t-8j-\frac54\right)\right)
  A_{n_{b_0+j+2}+n_{b_0+j+1}}(x).
\end{align}
On the short gap
\[
  8j+\frac{29}{4}\leq t\leq8j+\frac{35}{4},
\]
define
\begin{align}\label{EQ275}
  a(t,x)={}&\left(1-h\left(\frac23
  \left(t-8j-\frac{29}{4}\right)\right)\right)
  A_{n_{b_0+j+2}+n_{b_0+j+1}}(x)\notag\\
  &+h\left(\frac23
  \left(t-8j-\frac{29}{4}\right)\right)
  A_{n_{b_0+j+3}+n_{b_0+j+1}}(x).
\end{align}
The plateaus and the two displayed families of gaps cover $[0,\infty)$.
Since every positive-order derivative of $h$ vanishes at $0$ and $1$,
all pieces agree to every order at the endpoints; hence, $a$ is smooth.
From \eqref{EQ267},
\begin{align*}
  D_xA_\nu
  &=\frac{\nu\otimes\nu}{|\nu|^2}\cos(\nu\cdot x),\\
  D_x^2A_\nu
  &=-\frac{\nu\otimes\nu\otimes\nu}{|\nu|^2}
  \sin(\nu\cdot x),
\end{align*}
and hence
\begin{align*}
  |A_\nu|
  &\leq
  \frac{|\nu|}{|\nu|^2}
  |\sin(\nu\cdot x)|
  \leq|\nu|^{-1},\\
  |D_xA_\nu|
  &\leq
  \frac{|\nu\otimes\nu|}{|\nu|^2}
  |\cos(\nu\cdot x)|
  \leq1,\\
  |D_x^2A_\nu|
  &\leq
  \frac{|\nu\otimes\nu\otimes\nu|}{|\nu|^2}
  |\sin(\nu\cdot x)|
  \leq C|\nu|.
\end{align*}
On either gap, \eqref{EQ274}--\eqref{EQ275} has the form
\begin{align*}
  a=(1-\chi(t))A_\nu+\chi(t)A_\mu,
\end{align*}
where
\begin{align*}
  0\leq\chi\leq1,
  \qquad
  |\chi'|
  \leq
  \max\left\{\frac{2}{11},\frac23\right\}
  \|h'\|_{L^\infty}
  \leq C.
\end{align*}
Each of the frequencies $\nu$ and $\mu$ is the sum of two vectors among
$n_b,n_{b+1},n_{b+2},n_{b+3}$. Hence,
\begin{align*}
  \underbrace{\lambda_b}_{\sim 1+t}
  \leq |\nu|^2,|\mu|^2
  \leq
  \underbrace{4(\lambda_b+24)}_{\sim 1+t}
  \andand
  |\nu|\sim|\mu|\sim\sqrt{1+t}.
\end{align*}
Therefore,
\begin{align*}
  |a|
  &\leq
  |1-\chi|
  \underbrace{|A_\nu|}_{\leq C(1+t)^{-\frac{1}{2}}}
  +|\chi|
  \underbrace{|A_\mu|}_{\leq C(1+t)^{-\frac{1}{2}}}
  \leq C(1+t)^{-\frac{1}{2}},\\
  |\partial_ta|
  &=|\chi'(A_\mu-A_\nu)|\\
  &\leq
  |\chi'|
  \big(
  \underbrace{|A_\mu|}_{\leq C(1+t)^{-\frac{1}{2}}}
  +\underbrace{|A_\nu|}_{\leq C(1+t)^{-\frac{1}{2}}}
  \big)
  \leq C(1+t)^{-\frac{1}{2}},\\
  |D_xa|
  &\leq
  |1-\chi|
  \underbrace{|D_xA_\nu|}_{\leq1}
  +|\chi|
  \underbrace{|D_xA_\mu|}_{\leq1}
  \leq C,\\
  |D_x^2a|
  &\leq
  |1-\chi|
  \underbrace{|D_x^2A_\nu|}_{\leq C|\nu|}
  +|\chi|
  \underbrace{|D_x^2A_\mu|}_{\leq C|\mu|}
  \leq C(1+t)^{\frac{1}{2}}.
\end{align*}
On the plateaus the same bounds hold with $\partial_ta=0$, proving
\eqref{EQ272}--\eqref{EQ273}.
\end{proof}

The proof of the envelope lemma consists of three independent estimates:
comparison with a standard rescaled $L^2$ weight, the usual space-time
derivative bounds, and two normal-derivative estimates on the crossing
plateaus.

\cole
\begin{Lemma}[The slowly expanding envelope]\label{L06}
Let
\[
  G(y)=(1+|y|^2)^{-2},
\]
and, for a sufficiently large fixed $R_0$, define
\begin{align}\label{EQ277}
  g(t,x)=(R_0+t)^{-3/2}
  G\left(\frac{x-a(t,x)}{R_0+t}\right).
\end{align}
Then $g$ is positive and smooth, and
\begin{align}
  &C^{-1}\leq\|g(t)\|_{L^2_x}\leq C,\label{EQ278}\\
  &|\nabla g(t,x)|\leq \frac{C}{R_0+t}g(t,x),\label{EQ279}\\
  &|\partial_tg(t,x)|+|\Delta g(t,x)|\leq Cg(t,x).
  \label{EQ280}
\end{align}
Moreover,
\begin{align}
  |(m+n)\cdot\nabla g(t,x)|
  &\leq Cg(t,x)\frac{|m+n|}{R_0+t}
  \bigl(1-\cos((m+n)\cdot x)\bigr)
  \label{EQ281}
\end{align}
if $|t-(8j+1)|\leq1/4$, while
\begin{align}
  |(m+k)\cdot\nabla g(t,x)|
  &\leq Cg(t,x)\frac{|m+k|}{R_0+t}
  \bigl(1-\cos((m+k)\cdot x)\bigr)
  \label{EQ282}
\end{align}
if $|t-(8j+7)|\leq1/4$.
\end{Lemma}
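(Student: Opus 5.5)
We treat $g$ as a composition and reduce everything to the map $\Psi_t(x)=x-a(t,x)$ together with logarithmic derivatives of $G$. Write $L=R_0+t$ and $z=\Psi_t(x)/L$. Lemma~\ref{L05} gives $|a|\leq C(1+t)^{-1/2}\leq C$, $|D_xa|\leq C$, $|\partial_ta|\leq C$ and $|D_x^2a|\leq C(1+t)^{1/2}$.

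The basic inputs are the elementary bounds
\[
|\nabla G(y)|\leq C(1+|y|)^{-1}G(y)
\andand
|D^2G(y)|\leq C(1+|y|)^{-2}G(y),
\]
which hold because $G$ is a negative power of $1+|y|^2$. We also need a comparison $G(z)\sim G(x/L)$. Since $|z-x/L|\leq |a|/L\leq C/L\leq 1$, we have $1+|z|\sim 1+|x/L|$, so $G(z)/G(x/L)\in[C^{-1},C]$ uniformly.

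\textbf{Step 1: positivity, smoothness, and \eqref{EQ278}.} Smoothness and positivity follow immediately from the formula. For \eqref{EQ278}, the comparison above gives $g(t,x)\sim L^{-3/2}G(x/L)$. The substitution $x=Ly$ then gives $\|L^{-3/2}G(\cdot/L)\|_{L^2}=\|G\|_{L^2(\R^3)}$, which is finite because $G^2\sim|y|^{-8}$ is integrable.

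\textbf{Step 2: gradient and the bounds \eqref{EQ279}--\eqref{EQ280}.} The chain rule gives
\[
\nabla g=L^{-3/2}L^{-1}(I-D_xa)^{T}\nabla G(z),
\]
so $|\nabla g|\leq CL^{-1}g$ using $|D_xa|\leq C$ and $|\nabla G|\leq CG$.

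For $\partial_tg$ there are three contributions:
\begin{itemize}
\item $-\frac32L^{-1}g$, from the prefactor;
\item $\nabla G(z)\cdot\partial_tz$ with $\partial_tz=-\partial_ta/L-z/L$, which contributes at most $CL^{-1}(1+|z|)\,|\nabla G(z)|\,L^{-3/2}\leq Cg$ by the decay bound $|\nabla G|\leq C(1+|y|)^{-1}G$;
\item this middle contribution is where the weighted gradient bound is genuinely used.
\end{itemize}
For $\Delta g$ there are two types of terms. The first is $L^{-2}\,D^2G(z)[(I-D_xa)\cdot,(I-D_xa)\cdot]$, bounded by $CL^{-2}g$. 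The second is $L^{-1}\nabla G(z)\cdot\Delta_x a$, bounded by $CL^{-1}(1+t)^{1/2}g\leq Cg$. This is the one place where the $(1+t)^{1/2}$ growth of $D_x^2a$ must be beaten by $L^{-1}$, and it is. Constants are uniform provided $R_0\geq1$. We take $R_0$ large (for instance $R_0\geq 2\sup|a|$) so that the comparison in the first paragraph holds cleanly.

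\textbf{Step 3: normal derivatives \eqref{EQ281}--\eqref{EQ282}, the key step.} On the plateau $|t-(8j+1)|\leq1/4$, \eqref{EQ270} gives $a=A_\nu$ with $\nu=m+n$. Then
\[
\nu\cdot\nabla g=L^{-5/2}\,\nabla G(z)\cdot D_x\Psi_t\,\nu,
\]
and \eqref{EQ268} gives $D_x\Psi_t\,\nu=(1-\cos(\nu\cdot x))\nu$. Hence
\[
|\nu\cdot\nabla g|\leq L^{-5/2}|\nabla G(z)|\,|\nu|\,(1-\cos(\nu\cdot x))\leq Cg\,|\nu|L^{-1}(1-\cos(\nu\cdot x)).
\]
The case $\nu=m+k$ on the second plateau is identical, using \eqref{EQ271}. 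A subtlety here is the orientation of the matrix: the gradient is $(D_x\Psi_t)^T\nabla G$, so $\nu\cdot\nabla g=\nabla G\cdot(D_x\Psi_t\nu)$, and $D_xA_\nu$ is symmetric in any case.

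The main obstacle is making sure the constants in Step 2 are uniform in $t$, specifically the $\Delta_x a$ term. It is handled by combining $|D_x^2a|\lesssim(1+t)^{1/2}$ with the factor $L^{-1}$.
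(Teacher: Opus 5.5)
Your proof is correct and follows the paper's argument essentially step by step. It uses the same comparison $G\bigl((x-a)/(R_0+t)\bigr)\sim G\bigl(x/(R_0+t)\bigr)$ for large $R_0$ to get \eqref{EQ278}. It uses the same chain-rule formulas for $\nabla g$, $\partial_t g$, and $\Delta g$, with the $\nabla G\cdot\Delta a$ term controlled by $(R_0+t)^{-1}(1+t)^{1/2}\leq 1$. It also uses the identity \eqref{EQ268} on the plateaus to obtain \eqref{EQ281}--\eqref{EQ282}.

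Your weighted bound $|\nabla G(y)|\leq C(1+|y|)^{-1}G(y)$ just packages the paper's two inputs $|\nabla G|\leq CG$ and $|y\cdot\nabla G|\leq CG$ into one. One small presentational point: $\partial_t g$ has only two contributions, so your third bullet is a remark rather than a term.
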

\colb

\begin{proof}[Proof of Proposition~\ref{L06}]
The function $G$ satisfies
\begin{align}\label{EQ283}
  |\nabla G|+|D^2G|\leq CG
  \andand
  |y\cdot\nabla G(y)|\leq CG(y).
\end{align}
By Lemma \ref{L05}, $|a(t,x)|\leq C$.  Taking $R_0$
sufficiently large gives
\[
  1+\frac{|x-a(t,x)|^2}{(R_0+t)^2}
  \sim
  1+\frac{|x|^2}{(R_0+t)^2}
\]
with uniform constants.  Squaring \eqref{EQ277} and changing
variables $x=(R_0+t)y$ in the two comparison integrals gives
\[
  \|g(t)\|_{L_x^2}^2\sim
  \int_{\R^3}(1+|y|^2)^{-4}\,dy,
\]
which proves~\eqref{EQ278}.
For the following derivations, write
\[
y=\frac{x-a(t,x)}{R_0+t}.
\]
Direct differentiation gives
\begin{align*}
  \nabla g
  &=(R_0+t)^{-5/2}(I-D_xa)^T\nabla G(y),\\
  \Delta g
  &= (R_0+t)^{-7/2}
    \operatorname{tr}
    \left(
        (I-D_xa)^T
        D^2G(y)
        (I-D_xa)
    \right)
    -
    (R_0+t)^{-5/2}
    \nabla G(y)\cdot\Delta a,\\
  \partial_tg
  &=-\frac{3}{2(R_0+t)}g
  +(R_0+t)^{-3/2}\nabla G(y)\cdot
  \left(-\frac{\partial_ta}{R_0+t}-\frac{y}{R_0+t}\right).
\end{align*}
Equations \eqref{EQ283}, \eqref{EQ272}, and
\eqref{EQ273} now give
\[
  |\nabla g|\leq\frac{C}{R_0+t}g
  \andand |\partial_tg|+|\Delta g|\leq Cg.
\]
It remains to prove the two normal-derivative estimates. If
$|t-(8j+1)|\leq1/4$, then \eqref{EQ270} and
\eqref{EQ268} give
\[
  D_x(x-a)(m+n)
  =\bigl(1-\cos((m+n)\cdot x)\bigr)(m+n).
\]
Consequently,
\begin{align*}
  |(m+n)\cdot\nabla g|
  &=(R_0+t)^{-5/2}
  |D_x(x-a)(m+n)\cdot\nabla G(y)|\\
  &\leq Cg\frac{|m+n|}{R_0+t}
  \bigl(1-\cos((m+n)\cdot x)\bigr)
  ,
\end{align*}
which proves~\eqref{EQ281}.  Replacing $m+n$ by $m+k$
and using \eqref{EQ271} then gives~\eqref{EQ282}.
\end{proof}

\begin{proof}[Proof of Theorem \ref{T03}]
Let $W$ be the global switching profile from Proposition
\ref{P03}, and let $g$ be given by Lemma~\ref{L06}.  Set
\begin{align}\label{EQ284}
  \widetilde u(t,x)=g(t,x)W(t,x).
\end{align}
We first prove 
\begin{align}\label{EQ285}
  |\nabla g(t,x)\cdot\nabla W(t,x)|
  \leq Cg(t,x)|W(t,x)|
  ,
\end{align}
which is the only estimate that requires a case-by-case argument.
On $\mathcal C^c$, Lemma \ref{L04},
\eqref{EQ252}, and \eqref{EQ279} imply
\[
  |\nabla g\cdot\nabla W|
  \leq Cg\frac{\sqrt{1+t}}{R_0+t}M(t)
  \leq Cg|W|.
\]
On $\mathcal C_{1,j}$, \eqref{EQ261},
\eqref{EQ279}, and \eqref{EQ281} give
\begin{align*}
  |\nabla g\cdot\nabla W|
  &\leq |n||\nabla g||W|
  +(1-\alpha)|(m+n)\cdot\nabla g|\,|u_2|\\
  &\leq Cg\frac{\sqrt{1+t}}{R_0+t}|W|
  +Cg\frac{|m+n|}{R_0+t}
  \bigl(1-\cos((m+n)\cdot x)\bigr)M(t)\\
  &\leq Cg|W|,
\end{align*}
where the last line uses \eqref{EQ257} and~\eqref{EQ262}.
On $\mathcal C_{2,j}$, \eqref{EQ263},
\eqref{EQ279}, and \eqref{EQ282} give
\begin{align*}
  |\nabla g\cdot\nabla W|
  &\leq |m||\nabla g||W|+|(m+k)\cdot\nabla g|\,|u_5|\\
  &\leq Cg\frac{\sqrt{1+t}}{R_0+t}|W|
  +Cg\frac{|m+k|}{R_0+t}
  \bigl(1-\cos((m+k)\cdot x)\bigr)M(t)\\
  &\leq Cg|W|
\end{align*}
by \eqref{EQ257} and~\eqref{EQ264}.  These three cases prove
\eqref{EQ285}, including at every zero of~$W$.
We now define the potential.  Using \eqref{EQ249},
\eqref{EQ280}, and
\eqref{EQ285}, we obtain the pointwise inequality
\begin{align}\label{EQ286}
  |(\partial_t-\Delta)\widetilde u|
  &=|gqW+W(\partial_tg-\Delta g)
  -2\nabla g\cdot\nabla W|
  \leq C|\widetilde u|.
\end{align}
In particular, the left-hand side vanishes wherever $\widetilde u=0$.
Define
\begin{align}\label{EQ287}
  \widetilde v(t,x)=
  \begin{cases}
  \dfrac{(\partial_t-\Delta)\widetilde u(t,x)}
  {\widetilde u(t,x)},&\widetilde u(t,x)\neq0,\\[2mm]
  0,&\widetilde u(t,x)=0.
  \end{cases}
\end{align}
Then $\widetilde v$ is measurable, $\|{\widetilde v}\|_{L^\infty_{t,x}}\leq C$, and
\begin{align}\label{EQ288}
  \partial_t\widetilde u-\Delta\widetilde u
  =\widetilde v\widetilde u
\end{align}
pointwise and hence in distributions on $(0,\infty)\times\R^3$.
We next verify the global energy class and decay.  Equations
\eqref{EQ252} and \eqref{EQ278} give
\begin{align*}
  \|\widetilde u(t)\|_{L^2_x}
  &\leq2M(t)\|g(t)\|_{L^2_x}
  \leq B_0e^{-ct^2}.
\end{align*}
For each $0\leq t\leq T$,
\begin{align*}
  |\widetilde u(t,x)|&\leq C_Tg(t,x),
  \end{align*}
and
  \begin{align*}
  |\nabla\widetilde u(t,x)|
  &\leq|\nabla g||W|+g|\nabla W|\leq C_Tg(t,x).
\end{align*}
Together with \eqref{EQ278}, this proves
$\widetilde u\in L^2(0,T;H^1)$.  Moreover, the explicit formula
\eqref{EQ277} and the boundedness of $a$ give
\[
  |\widetilde u(t,x)|\leq C_T(1+|x|^2)^{-2}
    \comma  0\leq t\leq T.
\]
The right-hand side is in $L^2(\R^3)$, while $g$ and $W$ are pointwise
continuous in~$t$.  Dominated convergence therefore gives
\[
  \widetilde u\in C([0,\infty);L^2)
  \cap L^2_{\text{loc}}(0,\infty;H^1).
\]
By \eqref{EQ250} and $g>0$, we have $\widetilde u(0)\neq0$.
By continuity, there exists $t_2>0$ such that
$\|\widetilde u(t)\|_{L^2_x}>0$ for $0\leq t\leq t_2$.
For every $t>t_2$, Theorem~\ref{T01}, applied with
$p=\infty$, $t_1=0$, the above $t_2$, and $t_3=t$, gives
$\|\widetilde u(t)\|_{L^2_x}>0$. Therefore,
\begin{align}\label{EQ289}
  0<\|\widetilde u(t)\|_{L^2_x}
  \leq B_0e^{-ct^2}
    \comma t\geq0
\end{align}
Finally, $\widetilde v\not\equiv0$. Indeed, if
$\widetilde v=0$ almost everywhere, Theorem~\ref{T01} with $M=0$
would give an exponential lower bound in $t$ for
$\|\widetilde u(t)\|_{L^2_x}$, contradicting \eqref{EQ289} for large~$t$.
Taking $u=\widetilde u$, $v=\widetilde v$, and $B=B_0$ proves the theorem.
\end{proof}

\appendix
\section{Appendix}
\cole
\begin{Lemma}[Coercivity and Green identity on the compact graph domain]
\label{L02}
Let
\[
h(s)=as+\frac A2s^2,
\wherewhere A>0,
\]
and set
\[
L_h=\partial_s+\mathcal{H}-h'(s).
\]
\begin{enumerate}
\item If $w\in L^2(\R^{1+n})$, $\supp_s w$ is compact, and $L_hw\in
L^2$ in the sense of distributions, then
\begin{align}
\|w\|_2\leq A^{-\frac{1}{2}}\|L_hw\|_2.
\label{EQ21}
\end{align}

\item If, in addition, $z\in L^2$, $\supp_s z$ is compact, and $L_h^*z\in L^2$, then
\begin{align}
\langle L_hw,z\rangle_{L^2}=\langle w,L_h^*z\rangle_{L^2}.
\label{EQ22}
\end{align}
\end{enumerate}
\end{Lemma}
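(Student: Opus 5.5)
The plan is to establish both parts first for smooth, compactly supported functions and then pass to the graph class by mollification.

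\textbf{Smooth case of (1).} Take $w\in C_c^\infty(\R^{1+n})$ and split
\[
L_h=\partial_s+B(s),\qquad B(s)=\mathcal H-h'(s).
\]
Here $\partial_s$ is antisymmetric on $L^2$ and $B(s)$ is symmetric for each $s$. Expanding the square gives
\[
\|L_hw\|_2^2=\|\partial_sw\|_2^2+\|Bw\|_2^2+2\operatorname{Re}\langle\partial_sw,Bw\rangle .
\]
Integrating by parts in $s$, and using that $\mathcal H$ is $s$-independent while $\partial_sB=-h''=-A$, we get
\[
2\operatorname{Re}\langle\partial_sw,Bw\rangle=-\langle w,(\partial_sB)w\rangle=A\|w\|_2^2 .
\]
Hence $\|L_hw\|_2^2\geq A\|w\|_2^2$, which is exactly \eqref{EQ21}. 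For part (2), the identity \eqref{EQ22} is immediate for smooth compactly supported functions, since $L_h^*=-\partial_s+\mathcal H-h'$.

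\textbf{Density argument.} The main issue is to pass from smooth functions to the graph class: $w\in L^2$ with compact time support and $L_hw\in L^2$. Commutators with $|y|^2/16$ and with $h'(s)$ are not small under naive mollification, so I would proceed in two steps.

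\emph{Time mollification.} Let $w_\delta=\varrho_\delta *_s w$. Then $L_hw_\delta-\varrho_\delta*_s L_hw=[h',\varrho_\delta*_s]w$. On the fixed compact time support $h'$ is Lipschitz, so this commutator is bounded by $C\delta\|w\|_2\to0$. Moreover $\varrho_\delta*_s L_hw\to L_hw$ in $L^2$.

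\emph{Spatial regularization.} I would use Hermite truncation rather than convolution in $y$. For each fixed $s$, project onto the first $N$ eigenfunctions of $\mathcal H$ via the spectral projector $P_N$. This projector commutes with $\mathcal H$, with $\partial_s$, and with multiplication by $h'(s)$, so $L_hP_Nw=P_NL_hw\to L_hw$ and $P_Nw\to w$ in $L^2$. The truncated function $P_Nw$ is smooth in $y$, Schwartz class, and, after time mollification, smooth with compact time support. The computation in the smooth case only needs $w\in H^1_s$ with values in the domain of $\mathcal H$ and compact time support, which these approximants satisfy.

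\textbf{Conclusion.} Applying the smooth inequality to $P_Nw_\delta$ and letting first $N\to\infty$ and then $\delta\to0$ yields \eqref{EQ21}. For \eqref{EQ22}, approximate $w$ and $z$ simultaneously in the same way. The same projectors commute with $L_h^*$ as well, so both pairings converge and the identity passes to the limit.

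I expect this approximation step to be the main obstacle: the argument depends on checking that the Hermite projection preserves the graph norm convergence. Because $P_N$ commutes with every term of $L_h$ and $L_h^*$, this should go through.
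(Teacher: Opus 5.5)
Your proposal is correct and follows essentially the paper's argument. Both use the Hermite spectral projector $P_N=\mathbf 1_{[0,N]}(\mathcal H)$, which commutes with $L_h$ (and with $L_h^*$), apply the energy identity with $h''=A$ to $P_Nw$, and then let $N\to\infty$.

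Your time mollification is harmless but unnecessary. On the range of $P_N$ the operator $\mathcal H$ is bounded, and $h'$ is bounded on the compact time support. Hence $\partial_sP_Nw=P_NL_hw-(\mathcal H-h')P_Nw\in L^2$, which already places $P_Nw$ in $H^1_sL^2_y$ without any regularization in $s$. The paper uses this observation to skip that step.
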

\colb

\begin{proof}[Proof of Proposition~\ref{L02}]
Let
\[
P_N=\mathbf{1}_{[0,N]}(\mathcal{H}).
\]
Since $\mathcal{H}$ has discrete spectrum, $P_N$ has finite-dimensional range.  It commutes with $\mathcal{H}$, $\partial_s$, and multiplication by~$h'(s)$.  Set $w_N=P_Nw$.  Then
\[
L_hw_N=P_NL_hw\in L^2.
\]
On the compact time support of $w$, the function $h'$ is bounded, while $\mathcal{H}$ is bounded on the range of~$P_N$.  Consequently,
\[
\partial_s w_N=L_hw_N-(\mathcal{H}-h')w_N\in L^2,
\]
so $w_N\in H^1_sL^2_y$ and the following Hilbert-space integration by parts is legitimate:
\begin{align*}
\|L_hw_N\|_2^2
={}&\|\partial_sw_N\|_2^2+\|(\mathcal{H}-h')w_N\|_2^2
 +2\operatorname{Re}\int_{\R}
 \langle \partial_sw_N,(\mathcal{H}-h')w_N\rangle_{L^2_y}\,ds\\
={}&\|\partial_sw_N\|_2^2+\|(\mathcal{H}-h')w_N\|_2^2
 +\int_{\R^{1+n}}h''(s)|w_N|^2\,dy\,ds.
\end{align*}
The boundary term vanishes because $w_N$ has compact time support.  Since $h''=A$,
\[
A\|w_N\|_2^2\leq\|P_NL_hw\|_2^2.
\]
Letting $N\to\infty$ gives~\eqref{EQ21}.
For the Green identity, put $z_N=P_Nz$.  Both $w_N$ and $z_N$ belong to $H^1_sL^2_y$, have compact time support, and take values in a finite Hermite spectral subspace.  Hence,
\[
\langle L_hw_N,z_N\rangle=\langle w_N,L_h^*z_N\rangle.
\]
Using
\[
L_hw_N=P_NL_hw
\andand
L_h^*z_N=P_NL_h^*z,
\]
and the strong convergence $P_N\to\operatorname{Id}$ on $L^2$, we obtain~\eqref{EQ22}.
\end{proof}

\colb
\section*{Acknowledgments} 
IK and QX were supported in part by the NSF grant DMS-2205493
and the Simons grant SFI-MPS-TSM-00014233.
The authors used ChatGPT for language editing. All mathematical content was written and independently verified by the authors.

\small
\medskip\medskip
\noindent
I.~Kukavica\\
{Department of Mathematics, University of Southern California, Los Angeles, CA 90089}\\
e-mail: kukavica@usc.edu

\medskip\medskip
\noindent
Q.~Xu\\
{Department of Mathematics, University of Southern California, Los Angeles, CA 90089}\\
e-mail: xuqi@usc.edu

\end{document}